\theoremstyle{plain}
\newtheorem{theorem}{Theorem}[section]
\newtheorem{lemma}[theorem]{Lemma}
\newtheorem{assumption}[theorem]{Assumption}
\newcommand{\E}{{\rm E}}
\newcommand{\R}{\mathbb{R}}
\renewcommand{\P}{\mathbb{P}}
\newcommand{\n}{^{(n)}}
\newcommand{\cF}{\mathcal{F}}
\newcommand{\rmd}{ {\rm d}}
\newcommand{\bea}{\begin{eqnarray}}
	\newcommand{\eea}{\end{eqnarray}}
\newcommand{\beq}{\begin{equation}}
	\newcommand{\eeq}{\end{equation}}
\theoremstyle{definition}
\newtheorem{definition}[theorem]{Definition}
\newtheorem{corollary}[theorem]{Corollary}
\theoremstyle{remark}
\begin{document}
\begin{frontmatter}
\title{Inference via robust optimal transportation: theory and methods}
\runtitle{Robust optimal transportation}

\begin{aug}
\author[A]{\fnms{Yiming}~\snm{Ma}\ead[label=e1]{mayiming@mail.ustc.edu.cn}},
\author[B]{\fnms{Hang}~\snm{Liu}\ead[label=e2]{hliu01.ustc.edu.cn}}
\author[C]{\fnms{Davide}~\snm{La Vecchia}\ead[label=e3]{Davide.LaVecchia@unige.ch}}
\and
\author[D]{\fnms{Matthieu}~\snm{Lerasle}\ead[label=e4]{matthieu.lerasle@ensae.fr}}
\address[A]{Department of Statistics and Finance, School of Management,
	University of Science and Technology of China\printead[presep={,\ }]{e1}}

\address[B]{International Institute of Finance, School of Management, 
	University of Science and Technology of China \printead[presep={,\ }]{e2}}

\address[C]{University of Geneva\printead[presep={,\ }]{e3}}

\address[D]{CREST, CNRS, \'Ecole polytechnique, GENES, ENSAE Paris, Institut Polytechnique de Paris, 91120 Palaiseau, France\printead[presep={,\ }]{e4}}

\runauthor{Yiming Ma et al.}

\end{aug}

\begin{abstract}
Optimal transportation theory and the related $p$-Wasserstein distance ($W_p$, $p\geq 1$) are widely-applied in statistics and machine learning. In spite of their popularity, inference based on these tools has some issues. For instance, it is sensitive to outliers and it may not be even defined when the underlying model has infinite moments. To cope with these problems, first we consider a robust version of the primal transportation problem and show that it defines the {robust Wasserstein distance}, $W^{(\lambda)}$, depending on a tuning parameter $\lambda > 0$.  Second, we illustrate the link between $W_1$ and  $W^{(\lambda)}$ and study its key measure theoretic aspects. Third, we derive some concentration inequalities for $W^{(\lambda)}$. Fourth, we use $W^{(\lambda)}$ to define  minimum distance estimators, we provide their statistical guarantees and we illustrate how to apply the derived concentration inequalities for a data driven selection of $\lambda$.  Fifth,  we provide the {dual} form of the robust optimal transportation problem and we apply it to machine learning problems (generative adversarial networks and domain adaptation).  Numerical exercises 
provide evidence of the benefits yielded by our novel methods. 
\end{abstract}

\begin{keyword}[class=MSC]
\kwd{62F10}
\kwd{62F12}
\kwd{62F35}
\kwd{6208}
\kwd{68T09} 
\end{keyword}

\begin{keyword}
\kwd{Concentration inequalities }
\kwd{Minimum distance estimation}
\kwd{Robust Wasserstein distance}
\kwd{Robust generative adversarial networks}
\end{keyword}

\end{frontmatter}

\section{Introduction}
\label{intro}

\subsection{Related literature}

Optimal transportation (OT) dates back to the  work of \cite{monge1781memoire}, who considered the  problem of finding the optimal way to move given piles of sand to fill up given holes of the same total volume. Monge’s problem remained open until it was revisited and solved by Kantorovich, who characterized the optimal transportation plan in relation to the economic
problem of optimal allocation of resources. We refer to  \cite{villani2009optimal}, \cite{santambrogio2015optimal} for a book-length presentation in mathematics, to \cite{peyre2019computational} for a data science view, and to \cite{panaretos2020invitation} for a statistical perspective. 

Within the OT setting, the Wasserstein distance ($W_p, p \geq 1$) is defined as the minimum cost of transferring the probability mass from a source distribution to a target distribution, for a given transfer cost function. Nowadays, OT theory and $W_p$ are of fundamental importance for many scientific areas, including statistics, econometrics, information theory and machine learning. For instance,  in  machine learning, OT based inference is  a popular tool in generative models; see e.g.  \cite{arjovsky2017wasserstein}, \cite{genevay2018learning}, \cite{NGC22}  and references therein. Similarly, OT techniques  are widely-applied for problems related to domain adaptation; see  \cite{courty2014domain} and \cite{balaji2020robust}. In statistics and econometrics, estimation methods based on  $W_p$ are related to minimum distance estimation (MDE); see \cite{bassetti2006minimum} and  \cite{bernton2019parameter}. MDE is a research area in continuous development and the estimators  based on this technique are called minimum distance estimators. They are linked to M-estimators and may be robust. We refer to \cite{hampel1986robust},  \cite{van2000asymptotic} and  \cite{basu2011statistical} for a book-length statistical  introduction. As far as econometrics is concerned,  we refer to \cite{kitamura1997information} and \cite{hayashi2011econometrics}. The extant approaches for MDE are usually defined via Kullback-Leibler (KL) divergence, Cressie-Read power divergence, 
total variation (TV) distance, 
Hellinger distance, to mention a few. Some of those approaches (e.g. Hellinger distance) require kernel density estimation of the underlying probability density function. Some others (e.g. KL divergence) have poor performance when the considered distributions do not have a common support.   Compared to other notions of distance or divergence, $W_p$ avoids the mentioned drawbacks, e.g. it can be applied when the considered distributions do not share the same support and inference is conducted in a generative fashion---namely, a sampling mechanism is considered, in which non-linear functions map a low dimensional latent random vector
to a larger dimensional space; see \cite{genevay2018learning}.  This explains why $W_p$ is a popular inference tool; see e.g. \cite{courty2014domain,frogner2015learning,kolouri2017optimal,carriere2017sliced, peyre2019computational}; see \cite{H22} and \cite{la2022some} for a recent overview of technology transfers among different areas.

In spite of their appealing theoretical properties, OT and $W_p$ have two important inference aspects to deal with: (i) implementation cost and (ii) lack of robustness. (i) The computation of the solution to OT problem and of the computation of $W_p$ can be demanding. 
To circumvent this problem, \cite{cuturi2013sinkhorn} proposes the use of an  entropy regularized version of the OT problem. The resulting divergence  is called 
Sinkhorn divergence; see \cite{amari2018information} for further details.
\cite{genevay2018learning} illustrate the use of this divergence for inference in generative models, via  MDE. 
The proposed 
estimation method is attractive in terms of performance and
speed of computation. However (to the best of our knowledge) the statistical guarantees of the minimum Sinkhorn divergence estimators have not been provided.  (ii) Robustness is becoming a crucial aspect for many complex statistical and machine learning applications. The reason is essentially two-fold. First, large datasets (e.g. records of images or large-scale data collected over the internet) can be cluttered with outlying values (e.g. due to recording device failures). Second,  every statistical model represents only an approximation to reality. Thus, the need for inference procedures that remain informative even in the
presence of small deviations from the assumed statistical
(generative) model is in high demand, in statistics and in machine learning. Robust statistics deals with this problem. 
We refer to  \cite{hampel1986robust} and \cite{ronchetti2009robust} for a book-length discussion on robustness principles. 

Other papers have already mentioned the robustness issues of  $W_p$ and OT; see e.g. \cite{alvarez2008trimmed},   \cite{hallin2022cent},\cite{hallin2023rank},\cite{hallin2023cente}
\cite{del2022nonparametric},
\cite{mukherjee2021outlier}, \cite{yatracos2022limitations} and \cite{ronchetti2022}.  
%
In the OT literature,  
\cite{chizat2017unbalanced}  
proposes to deal with outliers via unbalanced OT, namely allowing the source and target distribution to be non-standard probability distributions.  This sensitivity is an undesired consequence of exactly satisfying the marginal constraints in OT transportation problem: intuitively, using Monge's formulation, the piles of sand and the holes need to have the same total volume, thus the optimal transportation plan is affected by the outlying values inducing a high transportation cost. Relaxing the exact marginal constraints in a way that the transportation plan does not assign large weights to outliers yields the unbalanced OT theory:  a mass variation is allowed in the OT problem  (intuitively, the piles of sand and the holes do not need to have the same volume) and outlying values can be ignored by the transportation plan. Working along the lines of this approach, \cite{balaji2020robust}  define a robust optimal transportation (ROBOT) problem and study its applicability to machine learning. In the same spirit, starting from OT formulation with a regularization term which relaxes the marginal constraints, \cite{mukherjee2021outlier} derive a novel  robust  OT problem,
that they label as ROBOT and they apply it to machine learning. Moreover,
building on the Minimum Kantorovich Estimator (MKE) of \cite{bassetti2006minimum}, Mukherjee et al. propose a class of estimators  obtained through
minimizing the ROBOT  and illustrate numerically their performance. 
In the same spirit, recently, \cite{NGC22}  defined a notion of  outlier-robust Wasserstein distance  which allows for an $\epsilon$-percentage of outlier mass to be removed from contaminated distributions. 

\subsection{Our contributions}


Working at the intersection between mathematics, probability, statistics, machine learning and econometrics, we study the theoretical, methodological and computational aspects of ROBOT. Our contributions to these different research areas can be summarized as follows.

We prove that the primal ROBOT problem of \cite{mukherjee2021outlier} defines a robust distance that we  call robust Wasserstein distance $W^{(\lambda)}$, which depends on a tuning parameter $\lambda$ controlling the level of robustness. We study the measure theoretic properties of $W^{(\lambda)}$, proving that  it induces a metric space whose  
separability and completeness are proved.   Moreover, we derive  concentration inequalities for the robust Wasserstein distance and illustrate their practical applicability for the selection of $\lambda$ in the presence of outliers.

These theoretical (in mathematics and probability) findings are instrumental to another contribution (in statistics and econometrics) of this paper: the development of a novel inferential theory based on robust Wasserstein distance.  Making use of convex analysis \citep{rockafellar2009variational} and of the techniques in \cite{bernton2019parameter}, we prove that the minimum robust Wasserstein distance estimators exist (almost surely) and are consistent (at the reference model, possibly misspecified). Numerical experiments show that the robust Wasserstein distance estimators remain reliable even in the presence of local departures from the postulated statistical model. These results complement the methodological investigations already available in the literature on minimum distance estimation (in particular the MKE of \cite{bassetti2006minimum} and \cite{bernton2019parameter}) and on ROBOT (\cite{balaji2020robust} and \cite{mukherjee2021outlier}).  Indeed, beside the numerical evidence available in the mentioned papers, there are (to the best of our knowledge) no statistical guarantees on the estimators obtained by the method of \cite{mukherjee2021outlier} and \cite{balaji2020robust}. 
Moreover, our results extend the use of OT to conduct reliable inference on random variables with infinite moments.


Another contribution is the derivation of the dual form of ROBOT for application to machine learning problems. This result not only completes the theory in \cite{mukherjee2021outlier}, but also  provides  the stepping stone for the implementation of ROBOT: we explain how our duality  can be applied to define robust Wasserstein Generative Adversarial Networks (RWGAN). Numerical exercises provide evidence that RWGAN have better performance than the extant Wasserstein GAN (WGAN) models. 
Moreover, we illustrate how our ROBOT can be applied to domain adaptation, another popular machine learning problem; see Section~\ref{Sec.DA} and Appendix~\ref{App.ROBOT}.

 
{Finally, as far as the approach in \cite{NGC22}  is concerned, we remark that their notion of robust Wasserstein distance has a similar spirit to our $W^{(\lambda)}$ (both are based on a total variation penalization of the original OT problem, but yield different duality formulations) and, similarly to our results in \S\ref{Sect_DA}, it is proved to be useful in generative adversarial network (GAN). In spite of these similarities, our investigation goes deeper in the probabilistic and statistical aspects related to inference methods based on $W^{(\lambda)}$. With this regard, some key difference between  \cite{NGC22} and our paper are that Nietert et al.:  (a) do not study concentration inequalities for their distance; (b) do not provide indications on how to select and control the $\epsilon$-percentage of outlier mass ignored by the OT solution---for a numerical illustration of this aspect in the setting of GAN, see Section \ref{Sect_DA}; (c) do not define  the estimators related to the minimization of their outlier-robust Wasserstein distance; (d) use regularization in the dual and a constraint in the primal, whereas ROBOT does the reverse: this simplifies our derivation of the dual problem.  In the next sections we take care of all these and other aspects for the ROBOT, providing the theoretical and the applied statistician with a novel toolkit for robust inference via OT.}

\section{Optimal transport}\label{Sec:Preliminaries}


\subsection{Classical OT}\label{Optimal transport}

{We recall briefly the fundamental notions of OT as formulated by Kontorovich; more details  are available in Appendix~\ref{App.OT}. Let $\mathcal{X}$ and $\mathcal{Y}$ denote two  Polish spaces, and let
	$\mathcal{P}(\mathcal{X})$ represent the set of all  probability measures on $\mathcal{X}$. }
Let $\Pi (\mu,\nu)$  denote the set of all joint probability measures of $\mu \in \mathcal{P(X)}$ and $\nu \in \mathcal{P(Y)}$. Kontorovich's problem aims at finding a joint distribution $\pi   \in \Pi (\mu,\nu)$ which minimizes the expectation of the coupling between $X$ and $Y$ in terms of the cost function $c$. This problem can be formulated as
\begin{equation} \label{equ2}
	\inf \left\lbrace \int_{\mathcal{X\times Y}} c(x,y) d \pi(x,y) : \pi \in \Pi(\mu,\nu) \right\rbrace .
\end{equation}
A solution to Kantorovich’s problem (KP) \eqref{equ2} is called an { optimal transport plan}.  Note that the problem is  convex, and its solution  exists under some mild assumptions on $c$, e.g., lower semicontinuous; see e.g. \citep[Chapter 4]{villani2009optimal}.

KP as in \eqref{equ2} has a dual form, which is related to  Kantorovich dual  (KD) problem
	$$\sup \left\lbrace \int_{\mathcal{Y}} \phi d\nu - \int_{\mathcal{X}} \psi  d\mu : \phi \in C_{b}(\mathcal{Y}) , \psi \in C_b(\mathcal{X})  \right\rbrace \quad \text{s.t.} \quad \phi(y) - \psi(x) \leq c(x,y),\enspace$$
for  $\forall(x,y)$,
where $C_b(\mathcal{X})$ is the set of bounded continuous functions on $\mathcal{X}$.
According to Th. 5.10 in \cite{villani2009optimal}, if  function $c$ is lower semicontinuous, there exists a solution to the dual problem such that the solutions to KD and KP coincide  (no duality gap). In this case, the solution is
$$\phi(y) =\inf \limits_{x \in \mathcal{X}} [\psi(x)+c(x, y)] \quad \text{and} \quad
\psi(x)=\sup \limits_{y \in \mathcal{Y}}[\phi(y)-c(x, y)],$$
where the functions $\phi$ and $\psi$ are called { $c$-concave} and { $c$-convex}, respectively, and $\phi$ (resp. $\psi$) is called the { $c$-transform} of $\psi$ (resp. $\phi$). When $ c$ is a metric on $ \mathcal{X} $,  OT problem becomes 
$	\sup \left\lbrace \int_{\mathcal{X}} \psi d\nu - \int_{\mathcal{X}} \psi  d\mu : \psi \, \text{is 1-Lipschitz continuous}    \right\rbrace$, which 
is the  Kantorovich-Rubenstein duality.
Now, let $(\mathcal{X},d)$ denote a complete metric space equipped with a metric $d:\mathcal{X} \times \mathcal{X} \rightarrow \mathbb{R}$, and let $\mu$ and $\nu$ be two probability measures on $\mathcal{X}$. 
Solving the optimal transport problem in \eqref{equ2}, with the cost function  $c(x,y) = d^p(x,y)$, introduces a distance, called  Wasserstein distance 
of order $p$ $(p \geq 1)$:  
\begin{align}
	W_p(\mu,\nu)&= \left(\inf  \int_{\mathcal{X\times X}} (d(x,y))^p d \pi(x,y)\right)^{1/p}. \label{equ5} 
\end{align}


With regard to (\ref{equ5} ), we emphasize that the ability to lift the ground distance $d(x,y)$ is one of the perks of $W_p$ and it makes it a suitable tool in statistics and machine learning; see Remark 2.18 in~\cite{peyre2019computational}.  Interestingly, this desirable feature becomes a negative aspect as far as robustness is concerned.  Intuitively, OT embeds the distributions geometry: when the underlying distribution is contaminated by outliers, the marginal constraints force OT to transport outlying values, inducing an undesirable extra cost, which in turns entails large changes in the $W_p$. More in detail, 
let $\delta_{x_0}$ be  point mass measure centered at $x_0$ (a point in the sample space), and $d(x, y)$ be the ground distance. Then, we introduce the $O\cup I$ framework, in which 
the majority of the data $(X_i)_{i\in I}$ are {inliers}, that is i.i.d.  informative data with distribution $\mu$, whilst a few data $(X_i)_{i\in O}$ are {outliers}, meaning that they are not i.i.d. with distribution $\mu$. 
We refer to \cite{LCL20} (and references therein) for more detail on  $O\cup I$ and for discussion on its connection to Huber gross-error neighbourhood.
Thus, for $\varepsilon >0$, we have
$W_p({\rm \mu}, (1-\varepsilon){\rm \mu}+\varepsilon \delta_{x_0})   \rightarrow \infty$,
as $x_0 \rightarrow \infty$. Thus, $W_p$ can be arbitrarily inflated by a small number of large observations and it is  sensitive to outliers.  For instance, let us consider $W_p$, with $p=1,2$. In Figure~\ref{fig:1a} and \ref{fig:1b}, we display the scatter plot for two bivariate distributions: the distribution in  panel (b) has some abnormal points (contaminated sample) compared to the distribution in panel (a) (clean sample). Although only a small fraction of the sample data is contaminated, both $W_1$ and $W_2$  display a large increase. Anticipating some of the results that we will derive in the next pages, we compute our $W^{(\lambda)}$ (see \S \ref{Sect_Wl}) and we notice that its value remains roughly the same in both panels. 
We refer
to \S \ref{Sec.SC} for additional insights on the resistance to outliers of $W^{(\lambda)}$.


\begin{figure}[t!]
	\centering
	\begin{subfigure}[b]{0.35\textwidth}
		\centering
		\includegraphics[width=\textwidth]{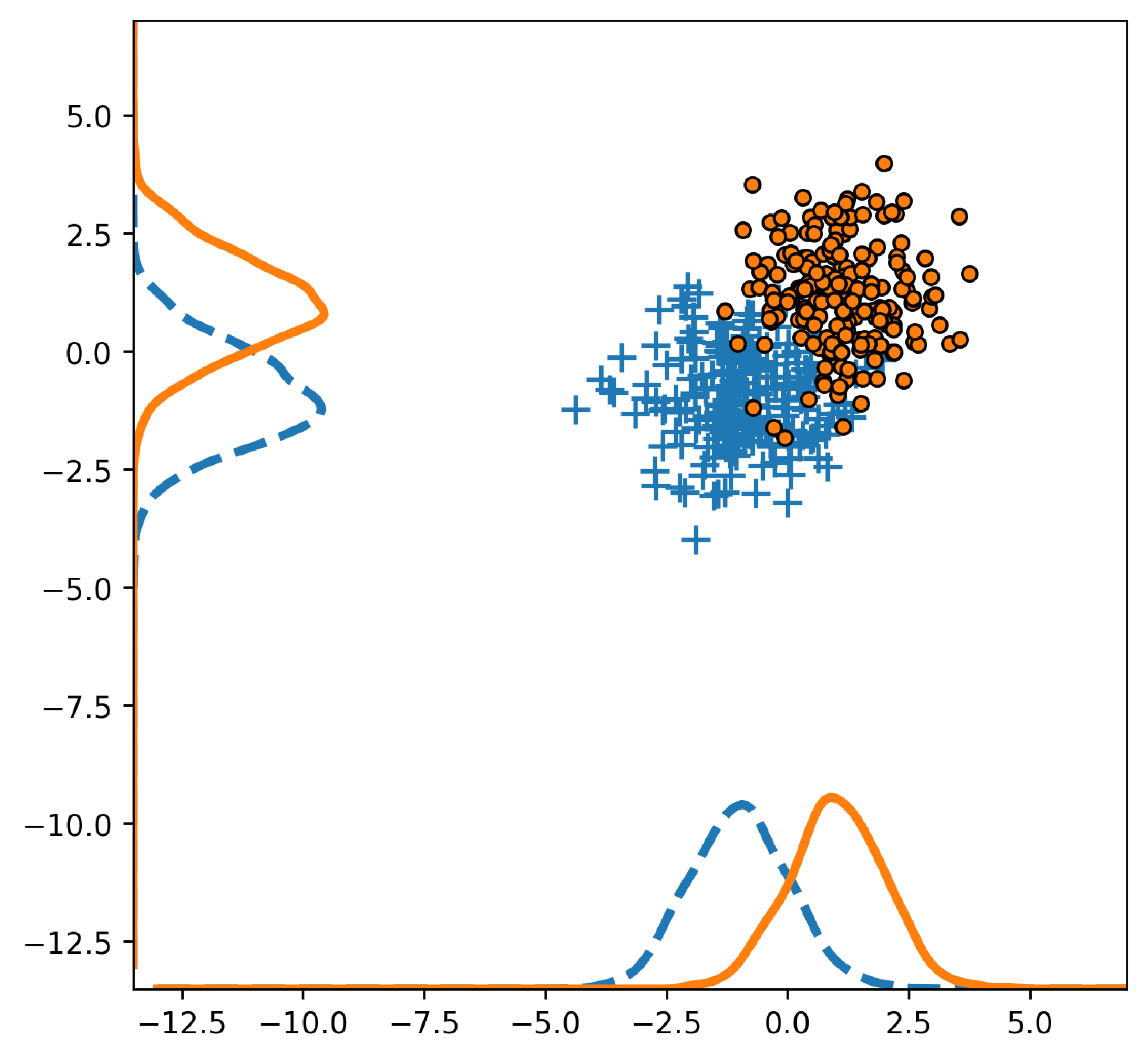}
		\caption{$W_1=3.0$, $W_2=3.0$ and $W^{(\lambda)}=2.9 (\lambda=3)$}
		\label{fig:1a}
	\end{subfigure}
 \hspace{10mm}
	\begin{subfigure}[b]{0.35\textwidth}
		\centering
		\includegraphics[width=\textwidth]{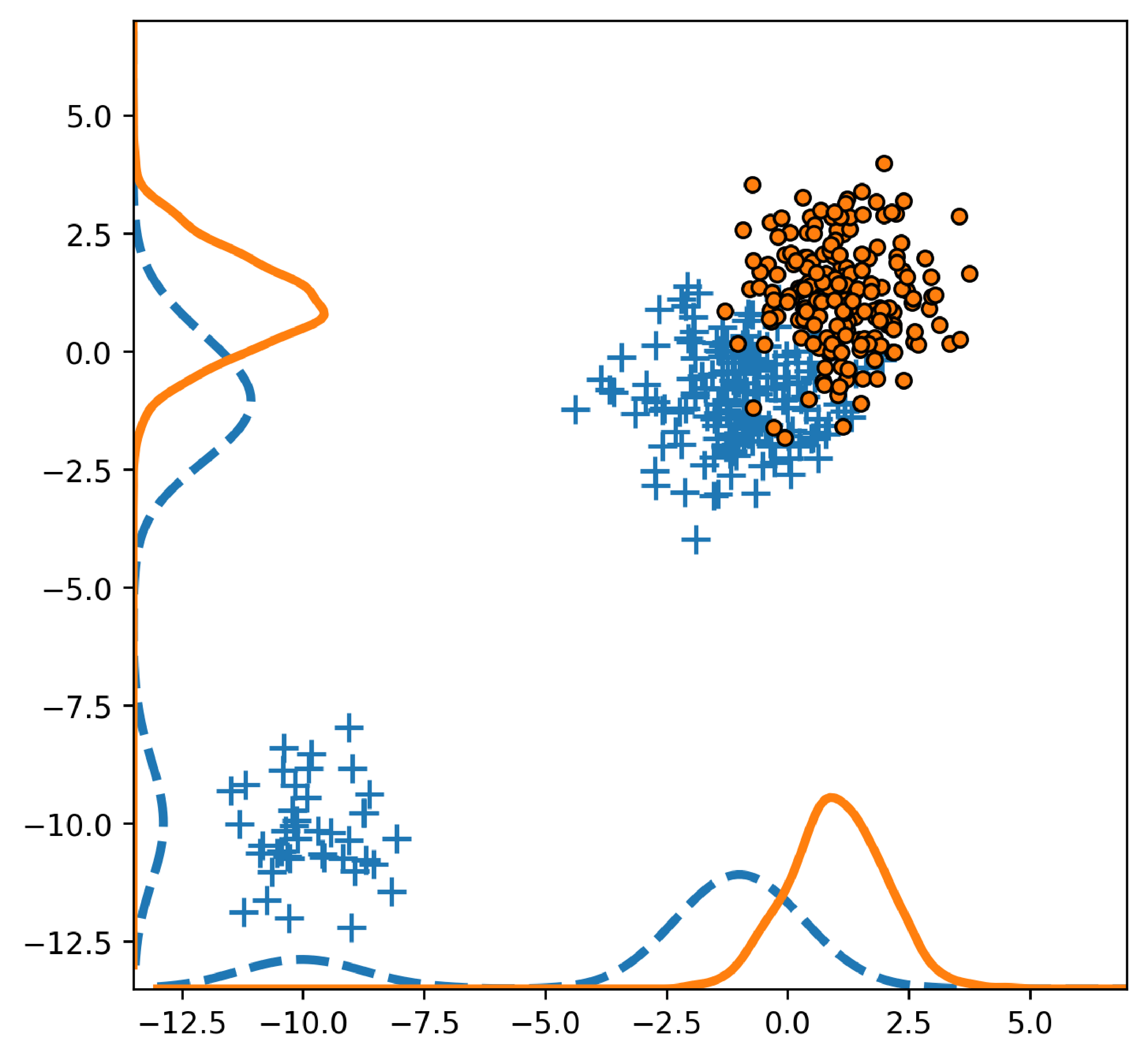}
		\caption{$W_1=5.3$, $W_2 = 6.6$ and $W^{(\lambda)}=3.3 (\lambda=3)$}
		\label{fig:1b}
	\end{subfigure}
	\caption{ Wasserstein distance ($W_1$ and $W_2$) and robust Wasserstein distance ($W^{(\lambda)}$, with $\lambda=3$) between two bivariate distributions. 
		The scatter plot of data in panel (a) represents a sample from the reference model:  cross points (blue) are generated from $ \mathcal{N}\left( \binom{-1}{-1},I_2 \right) $,  points (orange) are generated from $ \mathcal{N}\left( \binom{1}{1},I_2 \right) $. The plot in panel (b) contains some outliers: cross points (blue) are generated from $0.8  \mathcal{N}\left( \binom{-1}{-1},I_2 \right)+0.2\mathcal{N}\left( \binom{-9}{-9},I_2 \right) $, points (orange) are generated from $ \mathcal{N}\left( \binom{1}{1},I_2 \right) $. The marginal distributions are plotted on the $x$- and $y$-axis.}
	\label{fig1}
\end{figure}

\subsection{Robust OT (ROBOT)}

We recall the primal formulation of ROBOT problem, as defined in
\cite{mukherjee2021outlier}, who introduce a modified version of Kantorovich OT problem and work on the notion of unbalanced OT. Specifically, they consider 
a TV-regularized OT re-formulation
\begin{equation}\label{equ7}
	\begin{array}{ll}
		& \min _{\pi, s}  \iint c(x, y) \pi(x, y) d x d y+\lambda\|s\|_{\mathrm{TV}} \\
		\text {s.t.} & \int \pi(x, y) d y=\mu(x)+s(x) \geq 0 \\
		& \int \pi(x, y) d x=\nu(y) \\
		& \int s(x)dx=0,
	\end{array}
\end{equation}
where $\lambda>0$ is a regularization parameter. In \eqref{equ7}, the original source  measure $\mu$ is modified by adding $s$ (nevertheless, the first and last constraints ensure that $\mu + s$ is still a valid probability measure). Intuitively, having $\mu(x) +s(x)=0$ means that  $x \in {\cal X}$ has strong impact on the OT problem and hence can be labelled as an outlier. The outlier is eliminated from the sample, since the probability measure $\mu+s$ at this point is zero. 

\cite{mukherjee2021outlier} prove that a simplified, computationally efficient  formulation equivalent to \eqref{equ7} is 
\begin{equation} \label{equ8}
	\inf \left\lbrace  \int_{\mathcal{X\times X}} c_{\lambda}(x,y) d \pi(x,y) : \pi \in \Pi(\mu,\nu) \right\rbrace,
\end{equation}
which has a functional form similar to \eqref{equ2}, but the cost function $c$ is replaced by the trimmed cost function 
$c_{\lambda}=\min\left\lbrace c, 2\lambda \right\rbrace$   
that is bounded from above by $2\lambda$. Following \cite{mukherjee2021outlier}, we call formulations \eqref{equ7} and \eqref{equ8} ROBOT. 
Beside the primal formulation, in the following theorem we drive the dual form of the ROBOT, which is not available in  \cite{mukherjee2021outlier}.
{We need this duality for the development our inferential approach and, for ease of reference, we state it in form of theorem---the proof can be obtained from Th. 5.10 in \cite{villani2009optimal} and it is available in Appendix \ref{App:proof}.}


\begin{theorem}\label{thm3}
	Let $ \mu,\nu $ be probability measures in $ \mathcal{P(X)} $,
	with  $c_{\lambda}(x,y)$ as in (\ref{Eqc_l}). Then the {\rm c-transform} of a c-convex function $\psi(x)$ is itself, i.e. $\psi^{c}(x)= \psi(x)$. Moreover, the  dual form of ROBOT is related to the Kantorovich potential $\psi$, which is a solution to
	\begin{equation}
		\sup \left\lbrace \int_{\mathcal{X}} \psi d\mu - \int_{\mathcal{X}} \psi  d\nu : \psi \in C_b(\mathcal{X})  \right\rbrace, \label{Eq_DualROBOT}
	\end{equation}
	where $ \psi $ satisfies  $\vert \psi(x)-\psi(y) \vert \leq d(x,y)$ and $\mathrm{range} (\psi) \leq 2\lambda $.	
\end{theorem}

Note that the  difference between the dual form of OT and  that of ROBOT is that the latter imposes a restriction on the range of $\psi$. Thanks to this, we may think of using the ROBOT in theoretical proofs and inference problems where the dual form of OT is already applied, with the advantage that ROBOT remains stable even in the presence of outliers. In the next sections, we illustrate these ideas  {and we start the construction of our inference methodology by proving that to ROBOT is associated a robust distance, which will be the pivotal tool of our approach}.

\section{Robust Wasserstein distance $W^{(\lambda)}$ }\label{Sec:Theory}


\subsection{Key notions} 
\label{Sect_Wl}

Setting $c(x, y) = d(x, y)$, \eqref{equ5} implies that the corresponding minimal cost $W_1(\mu, \nu)$ is a metric between two probability measures $\mu$ and $\nu$. A similar result can be obtained also in the ROBOT setting of~\eqref{equ8}, if the modified cost function takes the form 
\begin{equation}
	c_{\lambda}(x,y) := \min \left\lbrace  d(x,y),2\lambda \right\rbrace. \label{Eqc_l}
\end{equation}
\begin{lemma}\label{lemma1}
	Let $d(\cdot,\cdot)$ denote a metric on ${\cal X}$. Then $c_{\lambda}(x, y) $ in (\ref{Eqc_l}) is also a metric on ${\cal X}$.
\end{lemma}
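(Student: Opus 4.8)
The plan is to verify the four metric axioms for $c_\lambda$, leveraging that $d$ is already a metric. Three of them are immediate, and the triangle inequality is the only substantive step.

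First I would dispatch the easy axioms. Non-negativity follows since $d(x,y)\geq 0$ and $2\lambda>0$, so that $c_\lambda(x,y)=\min\set{d(x,y),2\lambda}\geq 0$. Symmetry is inherited directly from the symmetry of $d$, as the truncation level $2\lambda$ does not depend on the order of the arguments. For the identity of indiscernibles, since $2\lambda>0$ we have $c_\lambda(x,y)=0$ if and only if $d(x,y)=0$, which in turn holds if and only if $x=y$ because $d$ is a metric.

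The main obstacle is the triangle inequality. My strategy is to isolate the one-dimensional truncation map $f:[0,\infty)\to[0,\infty)$, $f(t)=\min\set{t,2\lambda}$, and to show that it is non-decreasing and subadditive, i.e. $f(a+b)\leq f(a)+f(b)$ for all $a,b\geq 0$. Monotonicity is clear from the definition. For subadditivity I would argue by cases: if $a+b\leq 2\lambda$, then each of $a,b$ is at most $2\lambda$, so $f(a+b)=a+b=f(a)+f(b)$; if instead $a+b>2\lambda$, then $f(a+b)=2\lambda$, and either one of $a,b$ already exceeds $2\lambda$ (whence the right-hand side is at least $2\lambda$) or both lie below $2\lambda$ (whence the right-hand side equals $a+b>2\lambda$). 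In both situations $f(a)+f(b)\geq 2\lambda=f(a+b)$.

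With these two properties in hand, the triangle inequality for $c_\lambda$ follows by composition: using the triangle inequality for $d$ together with monotonicity of $f$, one gets $c_\lambda(x,z)=f(d(x,z))\leq f\paren{d(x,y)+d(y,z)}$, and then subadditivity yields $f\paren{d(x,y)+d(y,z)}\leq f(d(x,y))+f(d(y,z))=c_\lambda(x,y)+c_\lambda(y,z)$. I expect the case analysis in the subadditivity argument to be the only place requiring genuine care; everything else reduces to routine bookkeeping with the metric axioms for $d$.
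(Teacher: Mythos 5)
Your proof is correct and follows essentially the same route as the paper: both verify the easy axioms directly and reduce the triangle inequality to a case analysis on whether the relevant distances exceed $2\lambda$. The only difference is presentational — you package that case analysis as the monotonicity and subadditivity of the scalar truncation map $t\mapsto\min\{t,2\lambda\}$ and then compose with $d$, whereas the paper carries out the same two cases directly on $c(x,z)$ and $c(y,z)$; the underlying argument is identical.
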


Based on Lemma~\ref{lemma1},  we define the robust Wasserstein distance and prove that it is a metric on $\mathcal{P(X)}$.

\begin{theorem}\label{thm1}
	For $c_{\lambda}(x, y), x, y \in {\cal X},$ defined in (\ref{Eqc_l}),  
	\begin{equation} \label{Eq:Wl}
		W^{(\lambda)}(\mu,\nu) :=  \inf \limits_{\pi \in \Pi(\mu,\nu)} \left\lbrace  \int_{\mathcal{X\times X}} c_{\lambda}(x,y) d \pi(x,y) \right\rbrace  
	\end{equation}
	is a metric on $\mathcal{P(X)}$, and we call it robust Wasserstein distance.
\end{theorem}

It is worth noting that $W_p$ is well defined  only if probability measures have finite $p$-th order moments. In contrast,  thanks to the boundedness of the $c_\lambda$, our  $W^{(\lambda)}$ in (\ref{Eq:Wl}) can be applied to  all  probability measures, even those with infinite moments of any order. Making use of $W^{(\lambda)}$ we introduce the following

\begin{definition}[Robust Wasserstein space]\label{def1}
	A robust Wasserstein space is an ordered pair $(\mathcal{P(X)}, W^{(\lambda)})$, where $\mathcal{P(X)}$ is the set of all probability measures on a complete separable metric space $\mathcal{X}$, and $W^{(\lambda)}$ defined in Th.~\ref{thm1} is a (finite) distance on $\mathcal{P(X)}$.
\end{definition}

\subsection{Some measure theoretic properties} \label{Sec: MT}

{In this section we state, briefly, some preliminary results. The proofs (which are based on \cite{villani2009optimal}) of these results are available in Appendix \ref{App:proof}, to which we refer the reader interested in the mathematical details. Here, we provide essentially the key aspects, which we prefer to state in the form of theorems and corollary for the ease of reference in the theoretical developments available in the next sections.}

Equipped with Th. \ref{thm1} and Definition \ref{def1}, it is interesting to connect $W_1$ to $W^{(\lambda)}$, via the characterization of its limiting behavior 
as $\lambda \rightarrow \infty$. Since $W^{(\lambda)}$ is continuous and  monotonically increasing with respect to $\lambda$, one can prove (via dominated convergence theorem) that its limit coincides with the Wasserstein distance of order $p=1$. Thus:
\begin{theorem}\label{thm2}
	For any probability measures $\mu$ and $\nu$ in ${\cal P}({\cal X})$, $W^{(\lambda)}$ is continuous and  monotonically non-decreasing with respect to $\lambda \in [0,\infty)$. Moreover, if $ W_1(\mu,\nu)  $ exists, we have
	$$\lim_{\lambda \rightarrow \infty} W^{(\lambda)}(\mu,\nu) =W_1(\mu,\nu).$$ 
\end{theorem}  
Th. \ref{thm2} has a two-fold implication: first, $W^{(\lambda)}\leq W_1$; second, 
for large values of the regularization parameter, $W^{(\lambda)}$ and $W_1$ behave similarly. 
Another  important property of   $W^{(\lambda)}$ is that weak convergence  is entirely characterized by convergence in the robust Wasserstein distance. More precisely, we have
%
%
\begin{theorem}\label{thm4}
	Let $\left(\mathcal{X},d \right)$ be a Polish space. Then 
	$W^{(\lambda)}$ metrizes the weak convergence in $\mathcal{P({X})}$. 
	In other words, if $\left(\mu_{k}\right)_{k \in \mathbb{N}}$ is a 
	sequence of measures in $\mathcal{P(X)}$ and $\mu$ is another 
	measure in $\mathcal{P(X)}$, then 
	$
	\left(\mu_{k}\right)_{k \in \mathbb{N}} \, \, \text{converges 
		weakly in} \, \, \mathcal{P(X)} \, \,  \text{to} \,\, \mu
	$ 
	and $ W^{(\lambda)}\left(\mu_{k}, \mu\right) \rightarrow 0$
	are equivalent.
\end{theorem}

The next result follows immediately from Theorem~\ref{thm4}.
\begin{corollary}[Continuity of $ W^{(\lambda)} $]\label{cor1}
	For a Polish space   $ (\mathcal{X },d) $,  suppose that $ \mu_k \, (resp. \, \nu_k) $ converges weakly to $ \mu \, (resp.\, \nu) $ in $ \mathcal{P(X)} $ as $k \rightarrow \infty$. Then
	$$W^{(\lambda)}(\mu_k,\nu_k) \rightarrow W^{(\lambda)}(\mu,\nu).$$
\end{corollary}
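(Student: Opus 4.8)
The plan is to exploit two facts already established in the excerpt: by Theorem~\ref{thm1}, $W^{(\lambda)}$ is a genuine metric on $\mathcal{P(X)}$ and therefore obeys the triangle inequality; and by Theorem~\ref{thm4}, convergence in $W^{(\lambda)}$ is equivalent to weak convergence. Together these reduce the corollary to the elementary observation that any metric is jointly continuous with respect to the topology it induces, which is precisely why the statement is advertised as following immediately.

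First I would translate the hypotheses into statements about $W^{(\lambda)}$. Since $\mu_k \to \mu$ and $\nu_k \to \nu$ weakly in $\mathcal{P(X)}$, Theorem~\ref{thm4} yields $W^{(\lambda)}(\mu_k, \mu) \to 0$ and $W^{(\lambda)}(\nu_k, \nu) \to 0$ as $k \to \infty$.

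Next, applying the triangle inequality for the metric $W^{(\lambda)}$ twice along the chain $\mu_k \to \mu \to \nu \to \nu_k$ (and using symmetry), I would derive the quadrilateral inequality
\[
\left| W^{(\lambda)}(\mu_k, \nu_k) - W^{(\lambda)}(\mu, \nu) \right| \leq W^{(\lambda)}(\mu_k, \mu) + W^{(\lambda)}(\nu_k, \nu).
\]
Concretely, $W^{(\lambda)}(\mu_k,\nu_k) \le W^{(\lambda)}(\mu_k,\mu) + W^{(\lambda)}(\mu,\nu) + W^{(\lambda)}(\nu,\nu_k)$ bounds one difference, and the symmetric chain bounding $W^{(\lambda)}(\mu,\nu)$ in terms of $W^{(\lambda)}(\mu_k,\nu_k)$ bounds the other; combining the two gives the displayed absolute value.

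Letting $k \to \infty$, the right-hand side vanishes by the first step, so $W^{(\lambda)}(\mu_k, \nu_k) \to W^{(\lambda)}(\mu, \nu)$, which is the claim. There is essentially no obstacle: the entire content is carried by Theorem~\ref{thm4} together with the metric axioms of Theorem~\ref{thm1}. The only point deserving a line of care is that $W^{(\lambda)}$ is everywhere finite, guaranteed by the boundedness of $c_\lambda$ (cf. Definition~\ref{def1}), so all quantities above are well defined and the triangle inequality applies with no integrability caveat of the kind that would be needed for $W_1$.
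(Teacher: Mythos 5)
Your proof is correct and follows exactly the route the paper intends: the paper states the corollary "follows immediately from Theorem~\ref{thm4}," and your argument—translating weak convergence into $W^{(\lambda)}$-convergence via Theorem~\ref{thm4} and then applying the quadrilateral inequality afforded by the metric structure from Theorem~\ref{thm1}—is the standard filling-in of that one-line claim. No gaps; the remark about finiteness of $W^{(\lambda)}$ is a nice touch but not strictly needed.
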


Finally, we  prove the separability and completeness of robust Wasserstein space, when $ \mathcal{X} $ is separable and complete. To this end, we state
\begin{theorem}\label{thm5} 
	Let $\mathcal{X}$ be a complete separable metric space. Then the space $\mathcal{P(X)}$, metrized by the robust Wasserstein distance $W^{(\lambda)}$, is complete and separable.  Moreover, the set of  finitely supported measures with rational coefficients is dense in $ \mathcal{P(X)} $.
\end{theorem}

{We remark that our dual form in Th. \ref{thm3}  unveils a connection between $W^{(\lambda)}$  and the  Bounded Lipschitz (BL) metric discussed in \cite[p.41 and p.42]{dudley1969speed}:   the two metrics coincide for a suitable selection of $\lambda$. With this regard, we emphaisze that, since the BL  meterizes the weak-star topology,  Th. \ref{thm1}  and Th. \ref{thm4} are in line with the results  in Section 3 of \cite{dudley1969speed}.}

\subsection{Some novel probabilistic aspects} \label{ProbAsp}


The previous results 
offer the mathematical toolkit needed 
to derive some concentration inequalities for $W^{(\lambda)}$ building on the extant results for $W_1$.  To illustrate this aspect, let us consider the case of Talagrand transportation inequality $T_p$. We recall that
using $W_p$ as a distance between probability measures, transportation inequalities state that, given $\alpha>0$, a probability measure $\mu$ on $X$ satisfies $T_p(\alpha)$ if the inequality
$$ W_p(\mu,\nu) \leq \sqrt{{2}H(\mu \vert \nu)/{\alpha}}$$
holds for any probability measure $\nu$, with $H(\mu \vert \nu)$ being the Kullback-Leibler divergence.  
For $p=1$, it has been proven  that $T_1(\alpha)$ is equivalent to the existence of a square-exponential moment; see e.g. \cite{bolley2007quantitative}. 
Now, note that, from Th.~\ref{thm2}, we have $W^{(\lambda)} \leq W_1$: we can immediately claim  
that if $T_1(\alpha)$ holds, then we have 
$$W^{(\lambda)}(\mu,\nu) \leq \sqrt{{2}H(\mu \mid \nu)/{\alpha}}.$$ Thus,
we state 


\begin{theorem}\label{thmCI}
	
	Let $\mu$ be a probability measure on $\mathbb{R}^d$, which satisfies a $T_1(\alpha)$ inequality, and $\{X_1,...,X_n\}$ be a random sample of independent variables, all distributed according to $\mu$; let also
	$\hat{\mu}_n:= n^{-1} \sum_{i=1}^n \delta_{X_i}$
	be the associated empirical measure, where $\delta_x$ is the Dirac distribution with mass on $x \in \mathbb{R}^d$. Then, for any $d^{\prime}>d$ and $\alpha^{\prime}<\alpha$, there exists some constant $n_0$, depending only on $\alpha^{\prime}, d^{\prime}$ and some square-exponential moment of $\mu$, such that for any $\varepsilon>0$ and $$n \geq n_0 \max \left(\varepsilon^{-\left(d^{\prime}+2\right)}, 1\right),$$ we have
	$${\P}\left[W^{(\lambda)}\left(\mu, \hat{\mu}_n\right)>\varepsilon\right] \leq \exp\{- 0.5\alpha^{\prime} n \varepsilon^2\}.$$
\end{theorem}

The proof follows along the lines as the proof of Th. 2.1 in Bolley et al. combined with  Theorem~\ref{thm3} which yields 
\begin{equation}
	\mathrm{exp}\{- 0.5{\alpha^{\prime} }{2} n \varepsilon^2\} \geq {\P}\left[W_1\left(\mu, \hat{\mu}_n\right)>\varepsilon\right] \geq {\P} \left[W^{(\lambda)}(\mu,\hat\mu_n)>\varepsilon\right]. \label{Arg}
\end{equation} 

We remark that condition $n\geq n_0 \max (\varepsilon^{-\left(d^{\prime}+2\right)}, 1)$ implies that Th. \ref{thmCI} has an asymptotic nature.
Nevertheless, Bolley et al. prove that $${\P}\left[W_1\left(\mu, \hat{\mu}_n\right)>\varepsilon\right] \leq C(\epsilon) \exp\{- 0.5{\alpha^{\prime}}{} n \varepsilon^2\}$$ hods for any $n$ with $C(\epsilon)$ being a (large) constant depending on $\epsilon$. The argument in (\ref{Arg}) implies 
$${\P}\left[W^{(\lambda)} \left(\mu, \hat{\mu}_n\right)>\varepsilon\right] \leq C(\epsilon) \exp\{- 0.5{\alpha^{\prime}}{} n \varepsilon^2\}$$  for any $n \in \mathbb{N}$. This yields a concentration inequality which is valid for any sample size.

We obtained the results in Th. \ref{thmCI} using  an upper bound on $W_1$. The simplicity of our argument comes with a cost: for Th. \ref{thmCI}  to hold,  the underlying measure $\mu$ needs to satisfy some conditions required for $W_1$, like e.g. some moment restrictions. To obtain  bounds that do not need these stringent conditions,  in the next theorem we derive novel non asymptotic upper bounds working directly  on $W^{(\lambda)}$. The techniques that we apply to obtain our new results 
put under the spotlight the role of $\lambda$ to control the concentration in  the $O\cup I$ framework. 

\begin{theorem}\label{thm:NewCI}
	Assume that $X_1,\ldots,X_n$ are generated according to the $O\cup I$ setting and let $\mu$ denote the common distribution of the inliers.
	Let 
	$\hat{\mu}_n^{(I)}=|I|^{-1}\sum_{i\in I}\delta_{X_i}$ denote the empirical distribution based on inliers.
	Then, for any $t>0$ an, when $O=\varnothing$, we have
	\begin{gather}
		\P\bigg(|W^{(\lambda)}(\mu,\hat{\mu}^{(I)}_n)-\E[W^{(\lambda)}(\mu,\hat{\mu}^{(I)}_n)]|>\sigma\sqrt{\frac{2t}{|I|}}+\frac{4\lambda t}{|I|}\bigg)\leqslant 2\exp\big(-t\big)\label{IneqI}\enspace,
	\end{gather}
	where the variance term  is $$\sigma^2=|I|^{-1}\sum_{i\in I}\E[\min(d^2(X_i,X_i'),(2\lambda)^2)]$$ and $X_i'$ is an independent copy of $X_i$. When $ O \neq \varnothing$, we have that, for any $t>0$,
	\begin{gather}
		\P\bigg(\bigg|W^{(\lambda)}(\mu,\hat{\mu}_n)-\frac{|I|}n\E[W^{(\lambda)}(\mu,\hat{\mu}^{(I)}_n)]\bigg|>\sigma\sqrt{\frac{|I|}n}\sqrt{\frac{2t}{n}}+\frac{4\lambda t}{n}+\frac{2\lambda|O|}n\bigg)\leqslant 2\exp\big(-t\big) \label{IneqO} \enspace 
	\end{gather}
\end{theorem}

Some remarks are in order. (i)  In the case where $O=\varnothing$, (\ref{IneqI}) 
holds {without any assumption on $\mu$} as the variance term $\sigma^2$ is always bounded, even without assuming a finite second moment of $X_i$. This strongly relaxes the exponential square moment assumption required for $W_1(\mu,\hat{\mu}_n)$ in Th.~\ref{thmCI}.

(ii) (\ref{IneqO}) illustrates that the presence of contamination entails the extra term $2\lambda|O|/n$:
one can make  use of the tuning parameter $\lambda$ to mitigate the influence of outlying values. 
 
(iii) In (\ref{IneqO}), 
it is always possible to replace the term $${|I|}\E[W^{(\lambda)}(\mu,\hat{\mu}^{(I)}_n)]/n$$ by either $\E[W^{(\lambda)}(\mu,\hat{\mu}_n)]$ or $\E[W^{(\lambda)}(\mu,\hat{\mu}'_n)]$, where $\mu_n'=n^{-1}\sum_{i=1}^n\delta_{X_i'}$ and $X_1',\ldots,X_n'$ are i.i.d. with common distribution $\mu$.
This can be proved noticing that it is possible to bound $W^{(\lambda)}(\mu,\hat{\mu}_n)$ exploiting the fact that  $c_\lambda$ is bounded by $2\lambda$, so Th.~\ref{thm3} implies 
\begin{equation}\label{eq:LinkedIn}
	\frac{|I|}nW^{(\lambda)}(\mu,\hat{\mu}_n^{(I)})-\frac{2\lambda|O|}n\leqslant W^{(\lambda)}(\mu,\hat{\mu}_n)\leqslant \frac{|I|}nW^{(\lambda)}(\mu,\hat{\mu}_n^{(I)})+\frac{2\lambda|O|}n\enspace. 
\end{equation}

(iv) Delving more into the link between \eqref{IneqO} and  \eqref{eq:LinkedIn}, consider that, for any $\psi \in C_b(\mathcal{X})$, 
	\begin{align*}
		\int_{\mathcal{X}} \psi d\hat{\mu}_n - \int_{\mathcal{X}} \psi  d\mu&=\frac1n\sum_{i=1}^n\big\{\psi(X_i)-\int_{\mathcal{X}} \psi  d\mu\big\}\\
		&=\frac1n\bigg(\sum_{i\in I}\big\{\psi(X_i)-\int_{\mathcal{X}} \psi  d\mu\big\}+\sum_{i\in O}\big\{\psi(X_i)-\int_{\mathcal{X}} \psi  d\mu\big\}\bigg)\\
		&=\frac{1}n\bigg(|I|\big\{\int_{\mathcal{X}} \psi d\hat{\mu}^{(I)}_n - \int_{\mathcal{X}} \psi  d\mu\big\}+\sum_{i\in O}\big\{\psi(X_i)-\int_{\mathcal{X}} \psi  d\mu\big\}\bigg)\enspace.
	\end{align*}
	As $\text{range}(\psi)\leqslant 2\lambda$, for any $i\in O$, it holds that
	\[
	\big|\psi(X_i)-\int_{\mathcal{X}} \psi  d\mu\big|\leqslant 2\lambda\enspace.
	\]
	Therefore, we have, for any $\psi \in C_b(\mathcal{X})$, 
	\begin{align*}
		\big|\big\{\int_{\mathcal{X}} \psi d\hat{\mu}_n - \int_{\mathcal{X}} \psi  d\mu\big\}-\frac{|I|}n\big\{\int_{\mathcal{X}} \psi d\hat{\mu}^{(I)}_n - \int_{\mathcal{X}} \psi  d\mu\big\}\big|\leqslant \frac{2\lambda|O|}n\enspace.
	\end{align*}
	Taking the suprema over $\psi$ in $C_b(\mathcal{X})$ yields \eqref{eq:LinkedIn}.
	From \eqref{eq:LinkedIn} and the triangular inequality, we have that (adding and subtracting $\frac{|I|}nW^{(\lambda)}(\mu,\hat{\mu}^{(I)}_n)$)
	\begin{align*}
	\bigg|W^{(\lambda)}(\mu,\hat{\mu}_n)-\frac{|I|}n\E[W^{(\lambda)}(\mu,\hat{\mu}^{(I)}_n)]\bigg|
		\leqslant \frac{|I|}n\bigg|W^{(\lambda)}(\mu,\hat{\mu}^{(I)}_n)-\E[W^{(\lambda)}(\mu,\hat{\mu}^{(I)}_n)]\bigg| \\
		+\frac{2\lambda|O|}n\enspace.
	\end{align*}
	Inequality~\eqref{IneqO}  follows by plugging \eqref{IneqI} into this last bound. Finally, combining (\ref{IneqO}) and \eqref{eq:LinkedIn}, we obtain 
\begin{equation}
	\P\bigg(\big|W^{(\lambda)}(\mu,\hat{\mu}_n)-\E[W^{(\lambda)}(\mu,\hat{\mu}_n)]\big|>\sigma\sqrt{\frac{|I|}n}\sqrt{\frac{2t}{n}}+\frac{4\lambda t}{n}+\frac{4\lambda|O|}n\bigg)\leqslant 2\exp\big(-t\big)\enspace. \label{MattCI}
\end{equation}
Beside the concentrations in Th. \ref{thm:NewCI}, we consider the problem of providing an upper bound to $\E[W^{(\lambda)}(\mu,\hat\mu_n^{(I)})]$. Following \cite{boissard2014mean}, we call this quantity  the mean rate of convergence. For the sake of exposition,  we assume that all data are inliers, that is $X_1,\ldots,X_n$ are i.i.d. with common distribution $\mu$---the case with outliers can be obtained using (\ref{eq:LinkedIn}). As in Th. \ref{thmCI}, we notice that it is always possible to use $W^{(\lambda)}\leqslant W_1$ from Th. \ref{thm2} and  derive bounds for $\E[W^{(\lambda)}(\mu,\hat\mu_n)]$ applying the results for $\E[W_1(\mu,\hat{\mu}_n)]$; see e.g.
\cite{boissard2014mean}, \cite{fournier2015rate, Lei20, fournier2022convergence}. 
Beside this option, here we derive novel results for $\E[W^{(\lambda)}(\mu,\hat\mu_n)]$. 
In what follows, we assume that $\mu$ is supported in $B(0,K)$, the ball in Euclidean distance of $\R^d$ with radius $K$ and that the reference distance is the Euclidean distance in $\R^d$. Then we state:

\begin{theorem} \label{Matt_Th2}
	Let $\mu$ denote a measure on $B(0,K)$, the ball in Euclidean distance of $\R^d$ with radius $K$.
	Let $X_1,\ldots,X_n$ denote an i.i.d. sample of $\mu$ and $\hat{\mu}_n$ denote the empirical measure.
	Then, we have
	\[
	\E[W^{(\lambda)}(\mu,\hat\mu_n)]\leqslant 
	\begin{cases}
		C\sqrt{\frac{K(K\wedge\lambda)}{n}}&\text{ when }d=1\enspace,\\
		\frac{CK}{\sqrt{n}}\log\bigg(\frac{\sqrt{n}}K\bigg)&\text{ when }d=2\enspace,\\
		\frac{CK}{n^{1/d}}&\text{ when }d\geqslant 3\enspace.
	\end{cases}
	\]
\end{theorem}
The bound is obtained using (i) the duality formula derived in Th.~\ref{thm3}, (ii) an  extension of Dudley's inequality in the spirit of \cite{boissard2014mean}, and (iii)  a computation of the entropy number of a set of $1$-Lipschitz functions defined on the ball $B(0,K)$ and taking value in $[-\lambda,\lambda]$. The technical details are available in Appendix~\ref{App:proof}. Here, we remark the key role of $\lambda$ when $d=1$ (univariate case), which allows to bound the mean rate of convergence. For $d\geq2$, our  bounds could have been obtained also from \cite{boissard2014mean}. 
However, the obtained results  can be substantially larger than our upper bounds when $\lambda\ll K$: Th. \ref{Matt_Th2} provides a refinement (related to the boundedness of $c_\lambda$) of the extant bounds; see 
also Proposition 9 in \cite{NGC22} for a similar result.

\section{Inference via minimum $W^{(\lambda)}$  estimation}\label{Robust Wasserstein estimator}

%



\subsection{Estimation method}\label{Sec.EstMethod}

Let us consider a probability space $(\Omega, \mathcal{F}, {\rm P})$. On this probability space,  we define random variables taking values on $\mathcal{X} \subset \mathbb{R}^d, d \geq 1$ and endowed with the Borel $\sigma$-algebra. We observe $n \in \mathbb{N}$ i.i.d. data points $\{X_1, \ldots, X_n\}$, which are distributed according to $\mu_{\star}^{(n)} \in \mathcal{P}\left(\mathcal{X}^n\right)$. 
A parametric statistical model  on $ \mathcal{X}^n $ is denoted by  $ \{\mu_{\theta}\n \}_{\theta \in \Theta} $ and it is a collection of probability distributions indexed by a parameter $\theta$ of dimension $d_{\theta}$.  The parameter space is $ \Theta \subset \mathbb{R}^{d_\theta}, d_{\theta} \geq 1  $, which is equipped with a distance  $\rho_{\Theta}$.  We let  $Z_{1: n}$ represent the observations from  $\mu_{\theta}\n$. For every $\theta \in$ $\Theta$, the sequence $\left(\hat{\mu}_{\theta, n}\right)_{n \geq 1}$ of random probability measures on $\mathcal{X}$ converges (in some sense) to a distribution $\mu_\theta \in \mathcal{P}(\mathcal{X})$, where $$\hat{\mu}_{\theta, n}=n^{-1} \sum_{i=1}^n \delta_{Z_i}$$ with $Z_{1: n} \sim \mu_\theta^{(n)}$. Similarly, we will  assume that the empirical measure $\hat{\mu}_n$ converges  (in some sense) to some distribution $\mu_{\star} \in \mathcal{P}(\mathcal{X})$ as $n \rightarrow \infty$. 
We say that the model is well-specified if there exists $\theta_\star \in \Theta$ such that $\mu_\star\equiv  \mu_{\theta_\star}$; otherwise, it is misspecified. Parameters are identifiable: $\theta_1 = \theta_2$ is implied by 
$\mu_{\theta_1}=\mu_{\theta_2}$. 

Our estimation method relies on selecting  a parametric model in $ \{\mu_{\theta}\n \}_{\theta \in \Theta} $, which is the closest, in robust Wasserstein distance, to the true model $\mu_{\star}$.
Thus, minimum $W^{(\lambda)}$ estimation refers to the minimization, over the parameter $\theta \in \Theta$, of the robust Wasserstein distance between the empirical distribution $\hat{\mu}_n$ and the reference model distribution $\mu_\theta$. This is similar to the approach described in \cite{bassetti2006minimum} and \cite{bernton2019parameter}, who derive minimum Kantorovich estimators by making use of  $W_p$. 

More formally, 
the 
{minimum robust Wasserstein estimator} (MRWE) is defined as
\begin{equation}\label{MRWE}
	\hat{\theta}_n^{\lambda} =\underset{\theta \in \Theta}{\operatorname{argmin}} W^{(\lambda)}\left(\hat{\mu}_n, \mu_\theta\right). 
\end{equation}
When there is no explicit expression for the probability measure characterizing the  parametric model (e.g. in complex generative models, see \cite{genevay2018learning} and \S~\ref{Sec.SimWesti} for some examples), the computation of MRWE can be difficult. To cope with this issue, in lieu of (\ref{MRWE}), we propose using the  { minimum expected robust Wasserstein estimator} (MERWE) 
defined as
\begin{equation}\label{MERWE}
	\hat{\theta}_{n, m}^{\lambda} =\underset{\theta \in \Theta}{\operatorname{argmin}} {\rm E}_m \left[  W^{(\lambda)}\left(\hat{\mu}_n, \hat{\mu}_{\theta, m}\right)\right] .
\end{equation}
where the expectation $ \mathrm{E}_m $  is taken over  the distribution $ \mu^{(m)}_{\theta} $.  
To implement the MERWE one can rely on  Monte Carlo methods and approximate numerically
$ {\rm E}_m [W^{(\lambda)}\left(\hat{\mu}_n, \hat{\mu}_{\theta, m}\right)] $. Replacing the robust Wasserstein distance with the $W_p$ in (\ref{MERWE}), one obtains the {minimum  Wasserstein estimator} (MWE) and the minimum expected  Wasserstein estimator (MEWE), studied  in \cite{bernton2019parameter}.

\subsection{Statistical guarantees }


Intuitively, the consistency of the MRWE and MERWE can be conceptualized as follows. We expect that the empirical measure converges to $\mu_\star$, in the sense that $W^{(\lambda)}(\hat{\mu}_n,\mu_\star)\rightarrow 0$ as $n\rightarrow \infty$; see Th. \ref{thm4}. Therefore,
the $\arg \min$ of $W^{(\lambda)}(\hat{\mu}_n,\mu_\star)$ should converge to  
$$
\theta^{\ast} = \arg \min W^{(\lambda)}(\mu_\star,\mu_{\theta} ),
$$
assuming its existence and unicity. The same can be said for the minimum of the MERWE, provided that $m\rightarrow \infty$. If the reference parametric model is correctly specified (e.g. no data contamination), $\theta^\ast$ is the limiting object of interest and it is
the minimizer of $W^{(\lambda)}(\mu_\star,\mu_{\theta} )$. In the case of model misspecification (e.g. wrong parametric form of $\mu_\theta$ and/or presence of data contamination), $\theta^\ast$ is not necessarily the parameter that minimizes the KL divergence between the empirical measure and the measure characterizing the reference model. We emphasize that this is at odd with the standard misspecification theory (see e.g. \cite{white1982maximum}) and it is due to the fact that we replace the KL divergence (which yields non robust estimators) with our novel robust Wasserstein distance.

To formalize these arguments, 
we introduce the following set of assumptions, which are standard in the literature on MKE; see \cite{bernton2019parameter}.

\begin{assumption}\label{asum1}
	The data-generating process is such that $W^{(\lambda)}\left(\hat{\mu}_n, \mu_{\star}\right) \rightarrow 0, {\rm P}$-almost surely as $n \rightarrow \infty$.
\end{assumption}

\begin{assumption}\label{asum2}
	The map $\theta \mapsto \mu_\theta$ is continuous in the sense that $\rho_{\Theta}\left(\theta_n, \theta\right) \rightarrow 0$ implies that $\mu_{\theta_n} $ convergences to $ \mu_\theta$ weakly as $n \rightarrow \infty$.
\end{assumption}

\begin{assumption}\label{asum3}
	For some $\varepsilon>0$, 
	$$B_{\star}(\varepsilon)=\left\{\theta \in \Theta: W^{(\lambda)}\left(\mu_{\star}, \mu_\theta\right) \leq \varepsilon_{\star}+\varepsilon\right\},$$ with  $\varepsilon_{\star}=\inf _{\theta \in \Theta} W^{(\lambda)}\left(\mu_{\star}, \mu_\theta\right)$,  is a  bounded set.
\end{assumption}

Then we state the following
\begin{theorem}[Existence of MRWE]\label{thm6}
	Under Assumptions~\ref{asum1},\ref{asum2} and \ref{asum3}, there exists a set ${\cal A} \subset \Omega$ with ${\rm P}({\cal A})=1$ such that, for all $\omega \in {\cal A}$,
	$$
	\inf_{\theta \in \Theta} W^{(\lambda)}\left(\hat{\mu}_n(\omega), \mu_\theta\right) \rightarrow \inf _{\theta \in \Theta} W^{(\lambda)}\left(\mu_{\star}, \mu_\theta\right)
	$$
	and there exists $n(\omega)$ such that, for all $n \geq n(\omega)$, the sets $$\operatorname{argmin}_{\theta \in \Theta} W^{(\lambda)}\left(\hat{\mu}_n(\omega), \mu_\theta\right)$$ are non-empty and form a bounded sequence with
	$$
	\limsup _{n \rightarrow \infty} \underset{\theta \in \Theta}{\operatorname{argmin}} W^{(\lambda)}\left(\hat{\mu}_n(\omega), \mu_\theta\right) \subset \underset{\theta \in \Theta}{\operatorname{argmin}} W^{(\lambda)}\left(\mu_{\star}, \mu_\theta\right) .
	$$
\end{theorem}
To prove Th.~\ref{thm6} we need to show that  the sequence of functions $\theta \mapsto W^{(\lambda)}\left(\hat{\mu}_n(\omega), \mu_\theta\right)$ epi-converges (see Definition~\ref{def.EpiConverge} in Appendix~\ref{App:proof}) to $\theta \mapsto W^{(\lambda)}\left(\mu_{\star}, \mu_\theta\right)$. The result follows from \cite{rockafellar2009variational}. 
We remark that Th.~\ref{thm6} generalizes the results of \cite{bassetti2006minimum} and \cite{bernton2019parameter}, where the model is assumed to be well
specified (\cite{bassetti2006minimum}) and moments of order $p\geq 1$ are needed (\cite{bernton2019parameter}). 

Moving along the same lines as in Th. 2.1 in \cite{bernton2019parameter}, we may
prove the measurability of MRWE; see Th. \ref{thm7} in Appendix A.
%
%
%
These  results  for the MRWE provide the stepping stone to derive similar  theorems for the MERWE $\hat{\theta}_{n, m}^{\lambda}$. To this end, the following assumptions are needed.

\begin{assumption}\label{asum4}
	For any $m \geq 1$, if $\rho_{\Theta}\left(\theta_n, \theta\right) \rightarrow 0$, then $\mu_{\theta_n}^{(m)}$ converges to $\mu_\theta^{(m)}$ weakly as $n \rightarrow \infty$.
\end{assumption}

\begin{assumption}\label{asum5}
	If $\rho_{\Theta}\left(\theta_n, \theta\right) \rightarrow 0$, then ${\rm E}_n W^{(\lambda)}\left(\mu_{\theta_n}, \hat{\mu}_{\theta_n, n}\right) \rightarrow 0$ as $n \rightarrow \infty$.
\end{assumption}


\begin{theorem}[Existence of MERWE]\label{thm8}
	Under Assumptions~\ref{asum1},\ref{asum2},\ref{asum3},\ref{asum4} and \ref{asum5}, there exists a set ${\cal A} \subset \Omega$ with ${\rm P}({\cal A})=1$ such that, for all $\omega \in {\cal A}$, $$
	\inf _{\theta \in \Theta} {\rm E}_{m(n)}\left[  W^{(\lambda)}\left(\hat{\mu}_n(\omega), \hat{\mu}_{\theta, m(n)}\right)\right]  \rightarrow \inf _{\theta \in \Theta} W^{(\lambda)}\left(\mu_{\star}, \mu_\theta\right) $$
	and there exists $n(\omega)$ such that, for all $n \geq n(\omega)$, the sets 
	$$\operatorname{argmin}_{\theta \in \Theta} W^{(\lambda)}\left(\hat{\mu}_n(\omega), \hat{\mu}_{\theta, m(n)}\right)$$
	are non-empty and form a bounded sequence with
	$$
	\limsup _{n \rightarrow \infty} \underset{\theta \in \Theta}{\operatorname{argmin}} {\rm E}_{m(n)} \left[ W^{(\lambda)}\left(\hat{\mu}_n(\omega), \hat{\mu}_{\theta, m(n)}\right)\right]  \subset \underset{\theta \in \Theta}{\operatorname{argmin}} W^{(\lambda)}\left(\mu_{\star}, \mu_\theta\right).
	$$
\end{theorem}
For a generic function $f$, let us define 
$$\varepsilon\text{-}\operatorname{argmin}_x f := \left\{x: f(x) \leq \varepsilon+\inf _x f\right\}.$$ Then, we state the following

\begin{theorem}[Measurability of MERWE]\label{thm9}
	Suppose that $\Theta$ is a $\sigma$-compact Borel measurable subset of  $\mathbb{R}^{d_\theta}$. Under Assumption~\ref{asum4}, for any $n \geq 1$ and $m \geq 1$ and $\varepsilon>0$, there exists a Borel measurable function $\hat{\theta}_{n, m}: \Omega \rightarrow \Theta$ that satisfies 
	$$\hat{\theta}_{n, m}^{\lambda} (\omega) \in \operatorname{argmin}_{\theta \in \Theta} {\rm E}_m\left[  W^{(\lambda)} \left(\hat{\mu}_n(\omega), \hat{\mu}_{\theta, m}\right)\right] ,$$   { if this set is non-empty}, otherwise,  $$
	\hat{\theta}_{n, m}^{\lambda} (\omega) \in \varepsilon\text{-}\operatorname{argmin}_{\theta \in \Theta} {\rm E}_m\left[  W^{(\lambda)}\left(\hat{\mu}_n(\omega), \hat{\mu}_{\theta, m}\right)\right].
	$$
	
\end{theorem}

Considering the case where the data and $n$ are fixed, the next result shows that the MERWE converges to the MRWE, as $m \rightarrow \infty$. In the next assumption,  the observed empirical distribution is kept fix and $\varepsilon_n=\inf _{\theta \in \Theta} W^{(\lambda)}\left(\hat{\mu}_n, \mu_\theta\right)$.

\begin{assumption}\label{asum6}
	For some $\varepsilon>0$, the set 
	$$B_n(\varepsilon)=\left\{\theta \in \Theta: W^{(\lambda)}\left(\hat{\mu}_n, \mu_\theta\right) \leq \varepsilon_n+\varepsilon\right\}$$ is bounded.
\end{assumption}

\begin{theorem}[MERWE converges to MRWE as $ m \rightarrow  \infty $]\label{thm10}
	Under Assumptions~\ref{asum2},\ref{asum4},\ref{asum5} and \ref{asum6},
	$$
	\inf _{\theta \in \Theta} {\rm E}_m\left[  W^{(\lambda)}\left(\hat{\mu}_n, \hat{\mu}_{\theta, m}\right) \right]  \rightarrow \inf _{\theta \in \Theta} W^{(\lambda)}\left(\hat{\mu}_n, \mu_\theta\right),
	$$
	and there exists a $\tilde{m}$ such that, for all $m \geq \tilde{m}$, the sets 
	$$\operatorname{argmin}_{\theta \in \Theta} {\rm E}_m W^{(\lambda)}\left(\hat{\mu}_n, \hat{\mu}_{\theta, m}\right)$$ are non-empty and form a bounded sequence with
	$$
	\limsup _{m \rightarrow \infty} \underset{\theta \in \Theta}{\operatorname{argmin}} {\rm E}_m \left[ W^{(\lambda)}\left(\hat{\mu}_n, \hat{\mu}_{\theta, m}\right)\right]  \subset \underset{\theta \in \Theta}{\operatorname{argmin}} W^{(\lambda)}\left(\hat{\mu}_n, \mu_\theta\right).
	$$
\end{theorem}

\section{Numerical experiments}\label{Sec:Applications}

\subsection{Sensitivity to outliers} \label{Sec.SC}
To complement the insights obtained looking at Figure \ref{fig1}, we build on the notion of  sensitive curve, which is an empirical tool 
that illustrates the stability of statistical functionals; see 
\cite{tukey1977exploratory} and \cite{hampel1986robust} for a book-length presentation. 
%
%
We consider a similar tool to study the sensitivity of $W_1$ and $W^{(\lambda)}$ in  finite samples, both interpreted as functionals of the empirical measure. More in detail, 
we fix the standard normal distribution as a reference model (for this distribution both $W^{(\lambda)}$ and $W_1$ are well-defined) and we use it to generate a  sample of size  $n=1000$.  We denote the resulting sample $ \mathbf{X}_{n}:=(X_1, \ldots, X_{n})$, whose empirical measure is $\hat{\mu}_{n}$. Then, we replace $X_n $ with a value $ x \in \mathbb{R} $  and  form a new set of sample points, denoted by $ \mathbf{X}_{n}(x)$,  whose empirical measure is $\hat{\mu}^{(x)}_{n}$.
We let $ T_n $ represent the  robust Wasserstein distance between an $ n $-dimensional sample and $ \mathbf{X}_{n} $.   So, $ T_{n}(\mathbf{X}_{n})  $ is the empirical robust Wasserstein distance between $ \mathbf{X}_{n} $ and its self, thus it is equal to $0$, while  $ T_{n}(\mathbf{X}_{n}(x)) $ is the empirical robust Wasserstein distance between $ \mathbf{X}_{n}(x) $ and $ \mathbf{X}_{n} $. Finally, for different values of $x$ and $\lambda$,  we  compute (numerically) 
$$\Delta(x,W^{(\lambda)})=n\left[T_n\left(X_1, \ldots, X_{n-1}, x\right)-T_{n}\left(X_1, \ldots, X_{n}\right)\right]=n W^{(\lambda)}(\hat{\mu}_{n},\hat{\mu}^{(x)}_{n}).
$$  A similar procedure  is applied to obtain  $\Delta(x,W_1)$. 
We display the results in Figure~\ref{fig:SCofWasserstein}. 
For each value of $\lambda$, the plots illustrate that $\Delta(x,W^{(\lambda)})$ first grows and then remains flat (at the value of $2\lambda$) even for very large values of $\vert x \vert$. In contrast, for $\vert x \vert \to \infty$, $\Delta(x,W_1)$ diverges, in line with the evidence from (and the comments on) Figure \ref{fig1}. 
In addition, we notice that as $ \lambda \rightarrow \infty $, the plot of  $\Delta(x,W^{(\lambda)})$ becomes more and more similar to the one of $\Delta(x,W_1)$: this aspect is in line with Th. \ref{thm2} and it  is reminiscent of the behaviour of the Huber loss function (\cite{huber1992robust}) for location estimation, which converges to the quadratic loss (maximum likelihood estimation), as the  constant tuning  the robustness goes to infinity. However, an important remark is order: the cost $c_\lambda$ yields, in the language of robust statistics, the so-called ``hard rejection'': it bounds the influence of outlying values (to be contrasted with the behavior of Huber loss, which downweights outliers to preserve efficiency at the reference model); see  \cite{ronchetti2022} for a similar comment.

\begin{figure}[htbp]
	\centering
	\includegraphics[scale=0.2]{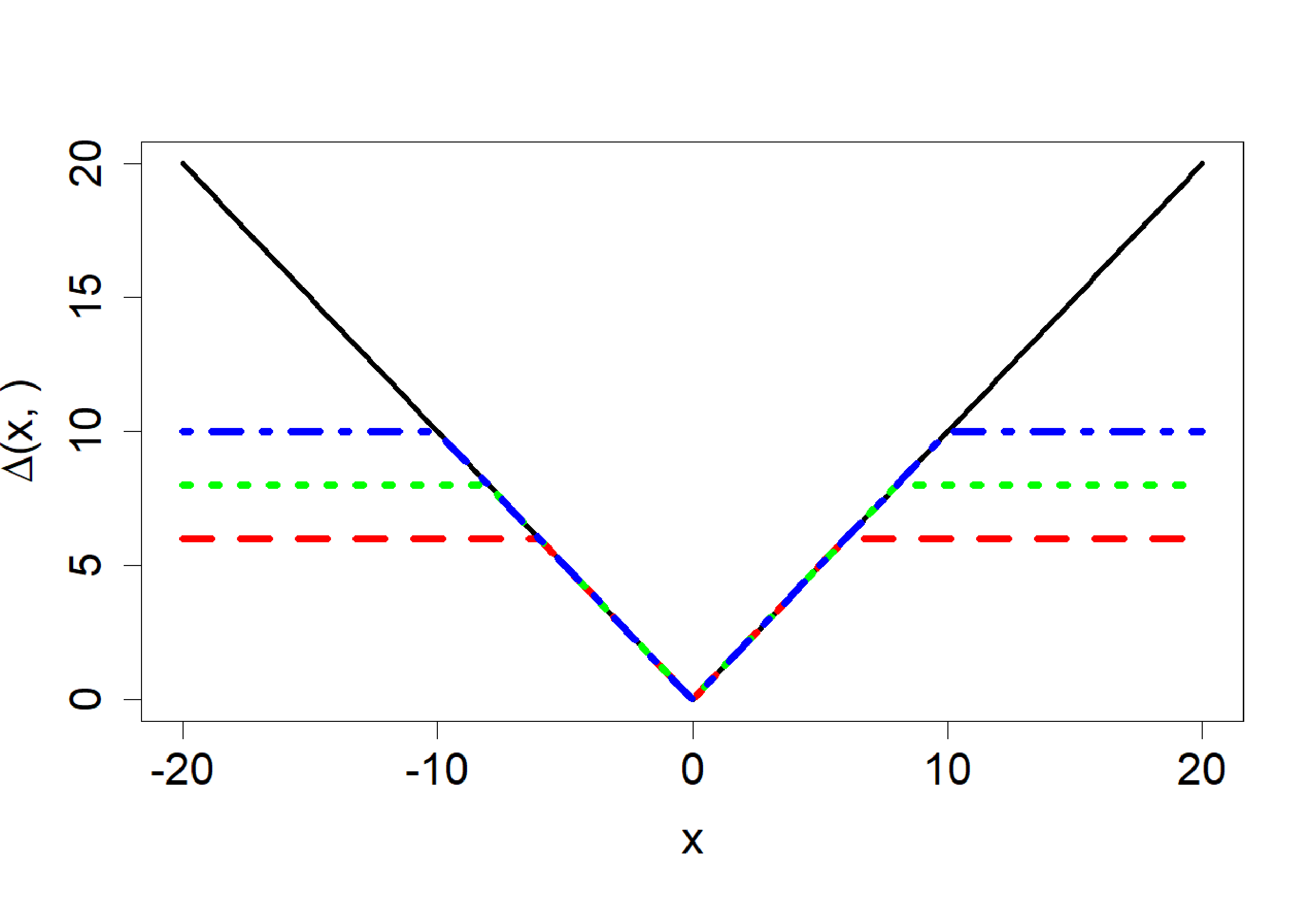}
	\caption{The continuous line represents $\Delta(x,W_1)$. The dashed (red), dot-dashed (blue) and dotted (green) line represents $\Delta(x,W^{(\lambda)})$ with $ \lambda=3,4, 5 $ respectively. }
	\label{fig:SCofWasserstein}
\end{figure}

\subsection{Estimation of location} \label{Sec.SimWesti}

Let us consider the statistical problem of inference on univariate location models. We study the robustness of MERWE, comparing its performance to the one of MEWE based on minimizing $W_1$, under different degrees of data contamination and for different underlying data generating models (with finite and infinite moments). 
Before delving into the numerical exercise, 
let us give some numerical details. We recall that the MERWE aims at minimizing the function  
$$\theta \mapsto {\rm{E}}_{m}\left[  W^{(\lambda)}\left(\hat{\mu}_n, \hat{\mu}_{\theta, m}\right) \right].$$  With the same notation as in \S \ref{Sec.EstMethod}, suppose we generate $k$ replications $Z_{1: m}^{(i)}, i = 1, \ldots, k,$ independently from a reference model $\mu_{\theta}^{(m)}$, and let $\hat{\mu}_{\theta, m}^{(i)}$ denote the empirical probability measure of $Z_{1: m}^{(i)}$. Then the loss function $$L_{\text{MERWE}} = k^{-1} \sum_{i=1}^k W^{(\lambda)}\left(\hat{\mu}_n, \hat{\mu}_{\theta, m}^{(i)}\right)$$ is a natural estimator of ${\rm E}_m [ W^{(\lambda)}\left(\hat{\mu}_n, \hat{\mu}_{\theta, m}\right)]$, since the former converges almost surely to the latter as $k \rightarrow \infty$.
We note that  the algorithmic complexity in $m$ is super-linear  while the complexity in $ k $ is linear: the incremental cost of increasing $ k $ is lower than that one of increasing $ m $. 
Moreover, a key aspect for the implementation of our estimator is related to the need for specifying $\lambda$. In the next numerical exercises we set $\lambda=5$: this value yields a good performance (in terms of Mean-Squared Error, MSE) of our estimators across the Monte Carlo settings, which consider different models and levels of contamination. In \S \ref{Sec:selection}, we describe a data driven procedure for selecting $\lambda$.

In our numerical investigation,
first, we consider a reference model which is the sum of log-normal random variables having finite moments. For a given $L \geq 1$, $\gamma \in \mathbb{R}$ and $\sigma>0$, we have $X = \sum_{l=1}^L \exp \left(Z_{l}\right),$
where $Z_1, \ldots, Z_L$ are sampled from $\mathcal{N}\left(\gamma, \sigma^2\right)$ independently. 
Suppose we are interested in estimating the location parameter $\gamma$. We investigate the performance of  MERWE and MEWE, under different scenarios: namely,  
in presence of outliers, for different sample sizes and  contamination values. Specifically, we consider the model
$$ X_i\n = \sum_{l=1}^L \exp(Z_{i_l}\n), l=1, \ldots, L, i = 1, \ldots, n,$$
where $Z_{i_l}\n \sim \mathcal{N}(\gamma,\sigma)$ for $i = 1, \ldots, n_1$ (clean part of the sample) and $Z_{i_l}\n \sim \mathcal{N}(\gamma + \eta,\sigma)$ for $i = n_1+1, \ldots, n$ (contaminated part of the sample). Therefore, in each sample, there are $n-n_1$ outliers of size $\eta$. 

In our simulation experiments, we set $ L=10 $, $\gamma = 0$ and $ \sigma=1 $. To implement the MERWE and MEWE of $\gamma$, we choose $m=1000 $, $k=20$ and $\lambda=5$.
The bias and MSE, based on $1000$ replications, of the estimators are displayed in Table~\ref{tab:Estimator}, for various sample sizes $n$, different contamination size $\eta$ and proportion of contamination $\varepsilon$.
The table illustrates the superior performance (both in terms of bias and MSE) of the MERWE with respect to the MEWE. In small samples ($n=100$), the MERWE has smaller bias and MSE than the MEWE, in all settings.  Similar results are available in moderate and large sample size $(n=200$ and $n=1000$). Interestingly, for $n=1000$, MERWE and MEWE have similar performance when $ \varepsilon=0 $ (no contamination), whilst the MERWE still has smaller MSE for $\varepsilon>0$. This implies that the MERWE maintains good efficiency with respect to MEWE at the reference model. 

\begin{table}[htbp]
	\centering
	\scalebox{0.6}[0.65]{
		\begin{tabular}{|c|c|c|c|c|c|c|c|c|c|c|c|c|}
			\hline
			\multicolumn{1}{|c|}{\multirow{3}[6]{*}{SETTINGS}} & \multicolumn{4}{c|}{$n=100$}    & \multicolumn{4}{c|}{$n=200$}   & \multicolumn{4}{c|}{$n=1000$} \bigstrut\\
			\cline{2-13}          & \multicolumn{2}{c|}{BIAS} & \multicolumn{2}{c|}{MSE} & \multicolumn{2}{c|}{BIAS} & \multicolumn{2}{c|}{MSE} & \multicolumn{2}{c|}{BIAS} & \multicolumn{2}{c|}{MSE} \bigstrut\\
			\cline{2-13}          & \multicolumn{1}{c|}{MERWE} & \multicolumn{1}{c|}{MEWE} & \multicolumn{1}{c|}{MERWE} & \multicolumn{1}{c|}{MEWE} & \multicolumn{1}{c|}{MERWE} & \multicolumn{1}{c|}{MEWE} & \multicolumn{1}{c|}{MERWE} & \multicolumn{1}{c|}{MEWE} & \multicolumn{1}{c|}{MERWE} & \multicolumn{1}{c|}{MEWE} & \multicolumn{1}{c|}{MERWE} & \multicolumn{1}{c|}{MEWE} \bigstrut\\
			\hline
			$ \varepsilon=0.1,\eta=1 $ &   0.049  & 0.092 & 0.003 & 0.010 &0.042 & 0.093 & 0.002 & 0.012  & 0.037 & 0.086 & 0.002 & 0.008  \bigstrut\\
			\hline
			$ \varepsilon=0.1,\eta=4  $&  0.035 & 0.090 & 0.002 & 0.012& 0.029 & 0.097 & 0.001 & 0.016   & 0.013 & 0.100 & $\approx 0$ & 0.018 \bigstrut\\
			\hline
			$ \varepsilon=0.2,\eta=1 $ &0.071  & 0.157 & 0.008 & 0.028  &  0.086 & 0.178 & 0.009 & 0.033  &  0.081 & 0.172 & 0.007& 0.031 \bigstrut\\
			\hline
			$ \varepsilon=0.2,\eta=4 $ & 0.046  &0.204 & 0.003& 0.045 & 0.035 & 0.203 & 0.002 & 0.043 & 0.017 & 0.195 & $\approx 0$ & 0.038\bigstrut\\
			\hline
			$ \varepsilon=0 $  & 0.036  & 0.034 & 0.002 & 0.002 & 0.022 & 0.022 &  0.001 &  0.001 &  0.012 & 0.010 & $\approx 0$ & $\approx 0$ \bigstrut\\
			\hline
		\end{tabular}%
	}
	\caption{The bias and MSE of the MERWE and MEWE of $\gamma$ for different $n$, $\varepsilon$ and $\eta$.}
\label{tab:Estimator}%
\end{table}%

Now, let us turn to the case of random variables having infinite moments. 
\cite{yatracos2022limitations} recently illustrates that minimum Wasserstein distance estimators do not perform well for heavy-tailed distributions and suggests to avoid the use of minimum Wasserstein distance inference when the underlying distribution has infinite $ p $-th moment, with $p\geq 1$. 
The theoretical developments of Section \ref{Sect_Wl} show that our MERWE does not suffer from the same criticism. In the next MC experiment, we illustrate  the good performance of MERWE when the data generating process does not admit finite first moment. We consider the problem of estimating the location parameters for a symmetric $\alpha$-stable distribution; see  e.g. \cite{samorodnitsky2017stable} for a book-length presentation.   A stable distribution is characterized by four parameter $ (\alpha,\beta,\gamma,\delta) $: $ \alpha $ is the index parameter, $ \beta $ is the skewness parameter, $ \gamma $ is the scale parameter and $ \delta $ is the location parameter. It is worth noting that stable distributions have undefined variance for $ \alpha <2 $, and undefined mean for $ \alpha \leq 1 $. We consider three parameters setting: 
\begin{itemize}
\item[(1)] $ (\alpha,\beta,\gamma,\delta) = (0.5,0,1,0) $, which represents a heavy-tailed distribution without defined mean; 
\item[(2)]  $ (\alpha,\beta,\gamma,\delta) = (1,0,1,0) $ which is the standard Cauchy distribution, having undefined moment of order $ p \geq 1$; 
\item[(3)] $ (\alpha,\beta,\gamma,\delta) = (1.1,0,1,0) $ representing a distribution having a finite mean. 
\end{itemize}
In each MC experiment, we estimate the location parameter, while the other parameters are supposed to be known. In addition, we consider contaminated  heavy-tailed data, where $(1-\varepsilon)$ proportion of $n$ observations is generated from $\alpha$-stable distribution with parameter $ (\alpha,\beta,\gamma,\delta) $ and the other $ \varepsilon $ proportion (outliers) comes from the distribution with parameter $ (\alpha,\beta,\gamma,\delta+\eta)  $  ($ \eta $ is the size of outliers).
We set $ m=1000$, $n =100$ and $k=20$ and repeat the experiment 1000 times  for each distribution and estimator, and $\lambda=5$.  We display the results in Table~\ref{tab:locationofheavytail}.
For the stable distributions, the  MEWE has larger bias and MSE than the ones yielded by the MERWE. This aspect is particularly evident for the distributions with undefined first moment, namely the Cauchy distribution and the stable distribution with $\alpha = 0.5$. These experiments, complementing the ones available in \cite{yatracos2022limitations}, illustrate that while MEWE (which is not well-defined in the considered setting) entails large bias and MSE values, MERWE is well-defined and performs well even for stable distributions with infinite moments.

\begin{table}[htbp]
	\centering
	\scalebox{0.6}[0.6]{
		\begin{tabular}{|c|c|c|c|c|c|c|c|c|c|c|c|c|}
			\hline
			\multicolumn{1}{|c|}{\multirow{3}[6]{*}{SETTINGS}} & \multicolumn{4}{c|}{Cauchy}    & \multicolumn{4}{c|}{Stable ($ \alpha = 0.5 $)}   & \multicolumn{4}{c|}{Stable ($ \alpha=1.1 $)} \bigstrut\\
			\cline{2-13}          & \multicolumn{2}{c|}{BIAS} & \multicolumn{2}{c|}{MSE} & \multicolumn{2}{c|}{BIAS} & \multicolumn{2}{c|}{MSE} & \multicolumn{2}{c|}{BIAS} & \multicolumn{2}{c|}{MSE} \bigstrut\\
			\cline{2-13}          & \multicolumn{1}{c|}{MERWE} & \multicolumn{1}{c|}{MEWE} & \multicolumn{1}{c|}{MERWE} & \multicolumn{1}{c|}{MEWE} & \multicolumn{1}{c|}{MERWE} & \multicolumn{1}{c|}{MEWE} & \multicolumn{1}{c|}{MERWE} & \multicolumn{1}{c|}{MEWE} & \multicolumn{1}{c|}{MERWE} & \multicolumn{1}{c|}{MEWE} & \multicolumn{1}{c|}{MERWE} & \multicolumn{1}{c|}{MEWE} \bigstrut\\
			\hline
			$ \varepsilon=0.1,\eta=1 $ &    0.084   &1.531    & 0.011     & 3.628  &0.088     & 3.179     &  0.011    & 13.731  & 0.090    & 0.658     &  0.011     & 1.030  \bigstrut\\
			\hline
			$ \varepsilon=0.1,\eta=4  $&  0.205    &  1.529     & 0.047     & 3.656   & 0.164     & 3.174    & 0.034     &13.706   & 0.207     &  0.746     & 0.048      & 1.051 \bigstrut\\
			\hline
			$ \varepsilon=0.2,\eta=1 $ &0.181     &   1.502     &0.038      &3.602  &  0.171      & 3.155      & 0.037     &12.840  &  0.181     &  0.676     &  0.038     & 0.942 \bigstrut\\
			\hline
			$ \varepsilon=0.2,\eta=4 $ & 0.459     & 1.820      &  0.224    & 4.691  & 0.384      & 3.140      & 0.166      &  12.713 & 0.485      &1.072       &0.245       & 1.802 \bigstrut\\
			\hline
			$ \varepsilon=0 $  & 0.046  &  1.551     &  0.003     &  3.740  & 0.045    &  3.118     & 0.003    &  12.600 &  0.042      & 0.613      & 0.003      &0.894  \bigstrut\\
			\hline
		\end{tabular}%
	}
	\caption{The bias and MSE of the MERWE and MEWE of location parameter for various contaminated $\alpha$-stable distributions. Simulation setting:  $m=1000 $, $k=20$.}
	\label{tab:locationofheavytail}
\end{table}%

\subsection{Generative Adversarial Networks (GAN)} \label{Sect_DA}

\textit{Synthetic data.} We propose two  RWGAN deep learning models: both approaches are based on ROBOT. The first one is  derived directly from the dual form in Th.~\ref{thm3}, while the second one is derived from~\eqref{equ7}.  We compare these two methods with routinely-applied  Wasserstein GAN (WGAN) and with the robust WGAN introduced by \cite{balaji2020robust}.  To provide some details on our new procedures, we recall that
GAN is a deep learning method, first proposed by \cite{goodfellow2014generative}. It is one of the most popular machine learning approaches for unsupervised learning on complex distributions. 
GAN includes a generator and a discriminator that are both neural networks and the procedure is based on mutual game learning between them. 
The  generator creates fake samples as well as possible to deceive the discriminator, while the discriminator needs to be more and more powerful to detect the fake samples. The ultimate goal of this procedure is to produce a generator with a great ability to produce high-quality samples just like the original sample.  
To illustrate the connections with the ROBOT, let us consider the following GAN architecture; more details are available in the Supplementary Material (Appendix \ref{app.rwgan}). Let $ {X} \sim {\rm P_r} $ and  ${Z} \sim {\rm P}_z$, where $\rm P_r$ and ${\rm  P}_z$ denote the distribution of the reference sample and of a random sample which is the input for the generator. Then, denote by
$ G_{\theta} $ the function applied by generator (it transforms $Z$ to create fake samples, which are the output of a statistical model ${\rm P}_\theta$, indexed by the finite dimensional parameter $ \theta $) and by  
$D_{\vartheta}$ the function applied by the discriminator (it is indexed by a finite dimensional  parameter $ \vartheta $, which outputs the estimate of the probability that the sample is true).
The objective function is 
\begin{equation}\label{eq:GAN}
	\min_{\theta} \max_{\vartheta}	\left\lbrace {\rm E}[\log D_{\vartheta}({X})]+{\rm E}[\log (1-D_{\vartheta}(G_{\theta}{(Z)}))]\right\rbrace ,
\end{equation}
where $D_{\vartheta}(X)$ represents the probability that $X$ came from the data rather than $P_\theta$. The GAN mechanism trains $D_{\vartheta}$ to maximize the probability of assigning the correct label to both training examples and fake samples. Simultaneously, it trains $G_{\theta}$ to minimize $\log(1-D(G(z)))$. 
In other words, 
we would like to train $ G_{\theta} $ such that $\mathrm{P_\theta}$ is very close (in some distance/divergence) to $\mathrm{P_r} $.

Despite its popularity, GAN has some inference issues. For instance, during the training, the generator may collapse to a setting where it always produces the same samples and face the  
fast vanishing gradient problem. Thus, training GANs is  a delicate and unstable numerical and inferential task;  see  \cite{arjovsky2017wasserstein}.
To overcome these problems, Arjovsky et al.  propose the so-called Wasserstein GAN (WGAN) based on Wasserstein distance of order $1$. The main idea is still based on mutual learning, but rather than using a discriminator to predict the probability of generated images as being real or fake, the WGAN replaces  $D_{\vartheta}$ with a function $ f_{\xi} $ (it corresponds to $ \psi $ in Kantorovich-Rubenstein duality), indexed by parameter $ \xi $, which is called ``a critic'', namely a function that evaluates the realness or fakeness of a given sample. In mathematical form, the WGAN objective function is 
\begin{equation}\label{equ11}
	\min_{\theta} \sup _{\|f_{\xi}\|_L \leq 1} \left\lbrace  {\rm E}[f_{\xi}({X})]-{\rm E}[f_{\xi}(G_{\theta}({Z}))] \right\rbrace ,
\end{equation}
Also in this new formulation, the task  is still to train $ G_{\theta} $ in such a way that $\mathrm{P_\theta}$ is very close (now, in Wasserstein distance) to $\mathrm{P_r} $.
\cite{arjovsky2017wasserstein} explain how the WGAN is connected to minimum distance estimation.
We remark that \eqref{equ11} has  the same form as the  Kantorovich-Rubenstein duality:  we apply our dual form  of ROBOT in Th.~\ref{thm3} to the WGAN to obtain a new objective function
\begin{equation}\label{equ12}
	\min_{\theta}\sup _{\|f_{\xi}\|_L \leq 1,{\rm range}(f_{\xi})\leq 2\lambda } \left\lbrace  {\rm E}[f_{\xi}({X})]-{\rm E}[f_{\xi}(G_{\theta}({Z}))] \right\rbrace.
\end{equation}
The central idea is to train a RWGAN by minimizing the robust Wasserstein distance (actually, using the dual form) between real and generative data. 
To this end, we  define a novel RWGAN model based on  \eqref{equ12}. The algorithm for training this RWGAN model is very similar to the one for WGAN available in \cite{arjovsky2017wasserstein}, to which we refer for the implementation.  We label this robust GAN model  as RWGAN-1.  Besides this model, we propose another approach derived from~\eqref{equ7}, where we use a new neural network to  represent  the modified distribution and add a penalty term to \eqref{equ11}
in order to control modification. Because of space constraint, the
details of this procedure are provided in Algorithm \ref{alg:robust wgan} in Appendix~\ref{app.rwgan}. We label this new RWGAN model as RWGAN-2. Different from \cite{balaji2020robust}'s robust GAN, which uses $\chi_2$-divergence to constrain the modification of distribution, our RWGAN-2 does this by  making use of the TV. 
In the sequel, we will write RWGAN-B for the robust GAN of \cite{balaji2020robust}, which we implement  using the same set up as Balaji et al. 
We remark that the RWGAN-1 is less computationally complex than RWGAN-2 and RWGAN-B. Indeed,  RWGAN-2 and RWGAN-B make use of some  regularized terms, thus an additional neural network is needed to represent the modification of the distribution. In contrast, RWGAN-1 has a simpler structure: for its implementation, it requires only a small modification of the activation function in the generator network; see Appendix \ref{app.rwgan}. 


To investigate  the robustness of  RWGAN-1 and RWGAN-2 we rely on synthetic data. We consider reference samples generated from  a  simple model, containing $ n $  points in total, with $ n- n_1 $ outliers, whose data generation process is 
\begin{equation}\label{true model}
	\begin{array}{cc}
		& X_{i_{1}}^{(n)} \sim \mathrm{U}(0,1),X_{i_{2}}^{(n)} =  X_{i_{1}}^{(n)} + 1,\\
		& {X}_i^{(n)}=(X_{i_{1}}^{(n)},X_{i_{2}}^{(n)} ), i = 1, 2, \ldots, n_1,\\
		& {X}_i^{(n)}=(X_{i_{1}}^{(n)},X_{i_{2}}^{(n)}+\eta) , i =  n_1 +1, n_1 +2, \ldots,n,
	\end{array}
\end{equation}
with $ \eta $ representing the size of outliers. We set $ n=1000 $ and try four different settings by changing values of  $\varepsilon = (n- n_1)/n$ and $\eta$. As it is common in the GAN literature, our generative adversarial  models are obtained using the \texttt{Python} library \texttt{Pytorch}; see Appendix \ref{app.rwgan}.  We display the results in Figure~\ref{syndata},  where we compare WGAN, RWGAN-1, RWGAN-2,  RWGAN-B, and  RWGAN-N (based on the outlier-robust Wasserstein distance $W_1^{(\epsilon)}$
of \cite{NGC22}, with $\epsilon=0.07$ and $\epsilon=0.25$, as in Nietert et al. paper) and RWGAN-D (based on the BL distance, as in \cite{dudley1969speed}). To measure the distance between the data simulated by the generator  and the input data, we report the Wasserstein distance of order 1. For visual comparison, we display the cloud of clean data points (blue triangles) and the cloud of GAN generated points (red squares).  The plots reveal that WGAN  is greatly affected by outliers. Differently, RWGAN-2, RWGAN-B, RWGAN-N and RWGAN-D are able to generate data roughly consistent with the uncontaminated distribution in most  of the settings. Nevertheless, they still produce  some obvious abnormal points, especially when the proportion and size of outliers increase. 
In particular, we notice that RWGAN-D performs well in the first two contamination settings ($\varepsilon = 0.1$, $\eta= 2$ and $\varepsilon = 0.1$, $\eta= 3$)
but it breaks down (namely, it generates points which are very different from the uncontaminated data) in the third and fourth contamination setting ($\varepsilon = 0.2$, $\eta= 2$ and $\varepsilon = 0.2$, $\eta= 3$), where the amount and the size of contamination increase and affect negatively the GAN performance.
In a very different way, RWGAN-1 performs better than its competitors and generates data that agree with the uncontaminated distribution, even when the proportion and size of outliers are large.  Several repetitions of the experiment lead to the same conclusions.

To conclude this experiment, we remark that RWGAN-N is very sensitive to the specified value of $\epsilon$. For instance, in the first contaminations setting, comparing the $W_1$ between the clean and the generated data by RWGAN-N  with $\epsilon=0.07$ to the $W_1$ of the RWGAN-1 illustrates that our method (whose $\lambda$ has been selected using the procedure described in Section \ref{Sec:selection}) performs better. Increasing $\epsilon$ to 0.25 improves on the RWGAN-N performance, making its $W_1$ closer to the one of the RWGAN-1. This is an important methodological consideration, which has a practical implication for the application of the RWGAN-N: a selection criterion
for the $\epsilon$ hyper-parameter needs to be proposed, discussed and studied similarly to what we do in Section \ref{Sec:selection}. To the best of our knowledge, such a criterion is not available in the literature. \\


	%

\begin{figure}[http]
	\centering
	\begin{subfigure}[b]{0.24\textwidth}
		\centering
		\includegraphics[width=\textwidth]{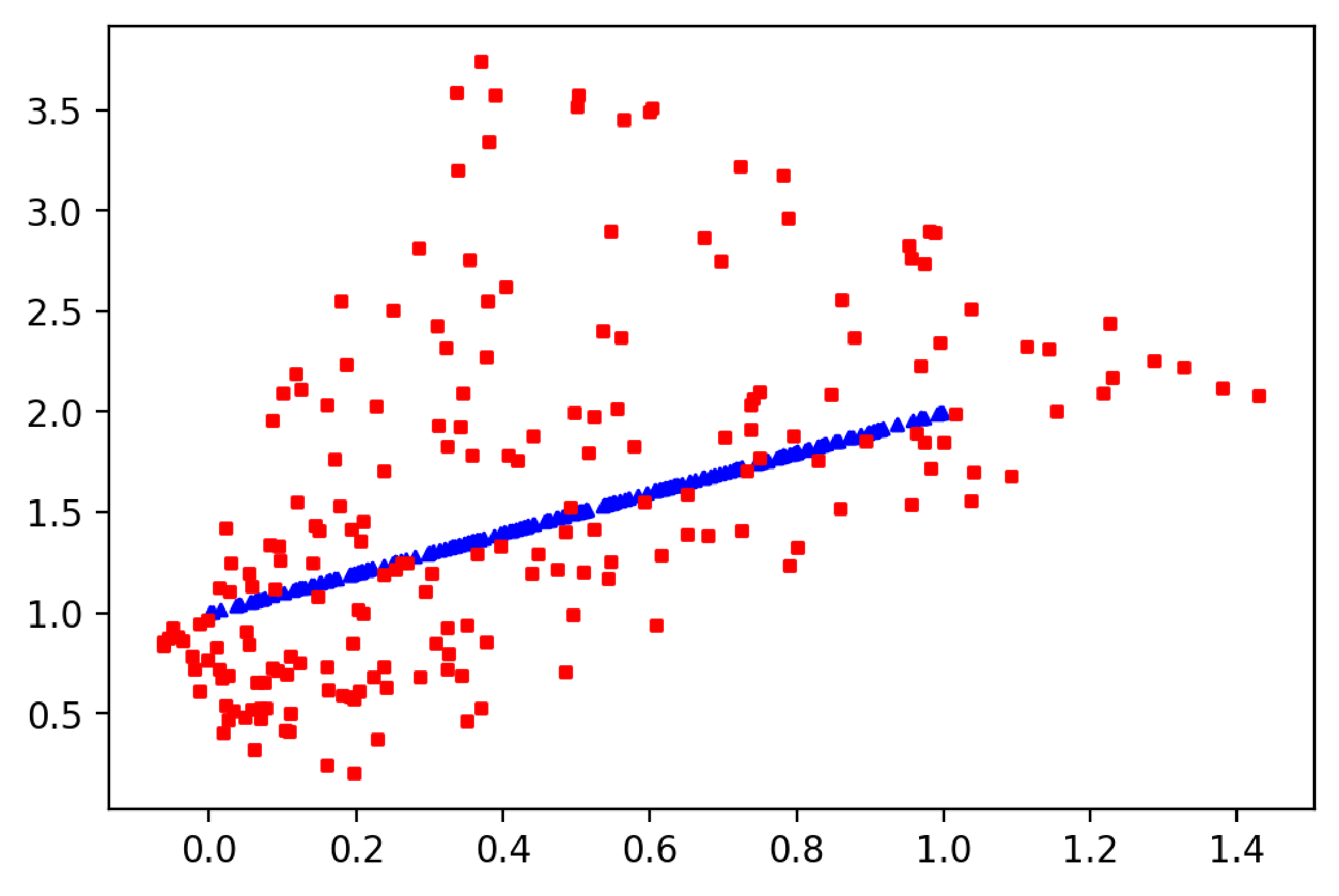}
		\caption*{ $ W_1= $  0.5864 }
	\end{subfigure}	
	\begin{subfigure}[b]{0.24\textwidth}
		\centering
		\includegraphics[width=\textwidth]{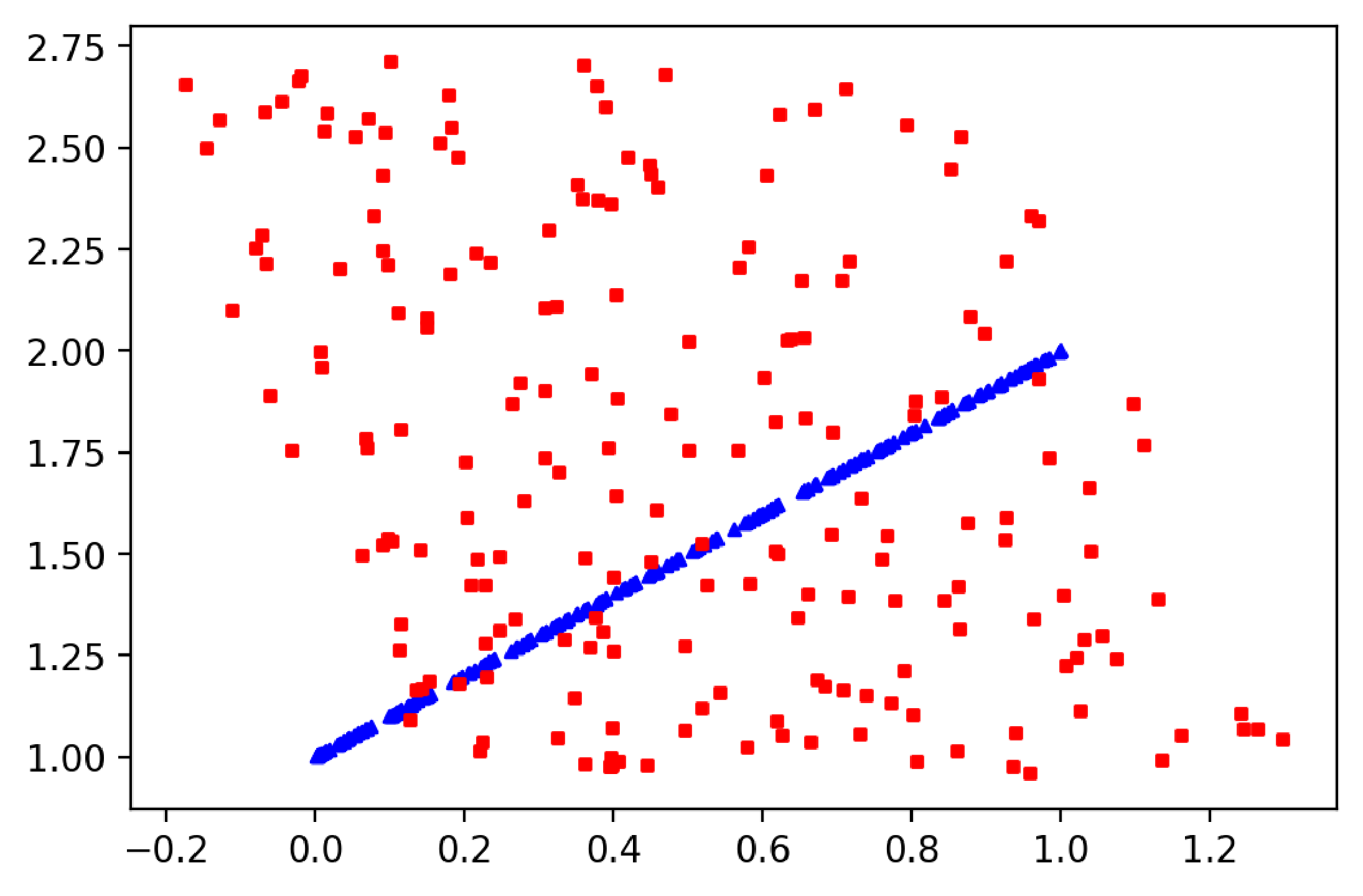}
		\caption*{$ W_1= $  0.5229}
	\end{subfigure}
	\begin{subfigure}[b]{0.24\textwidth}
	\centering
	\includegraphics[width=\textwidth]{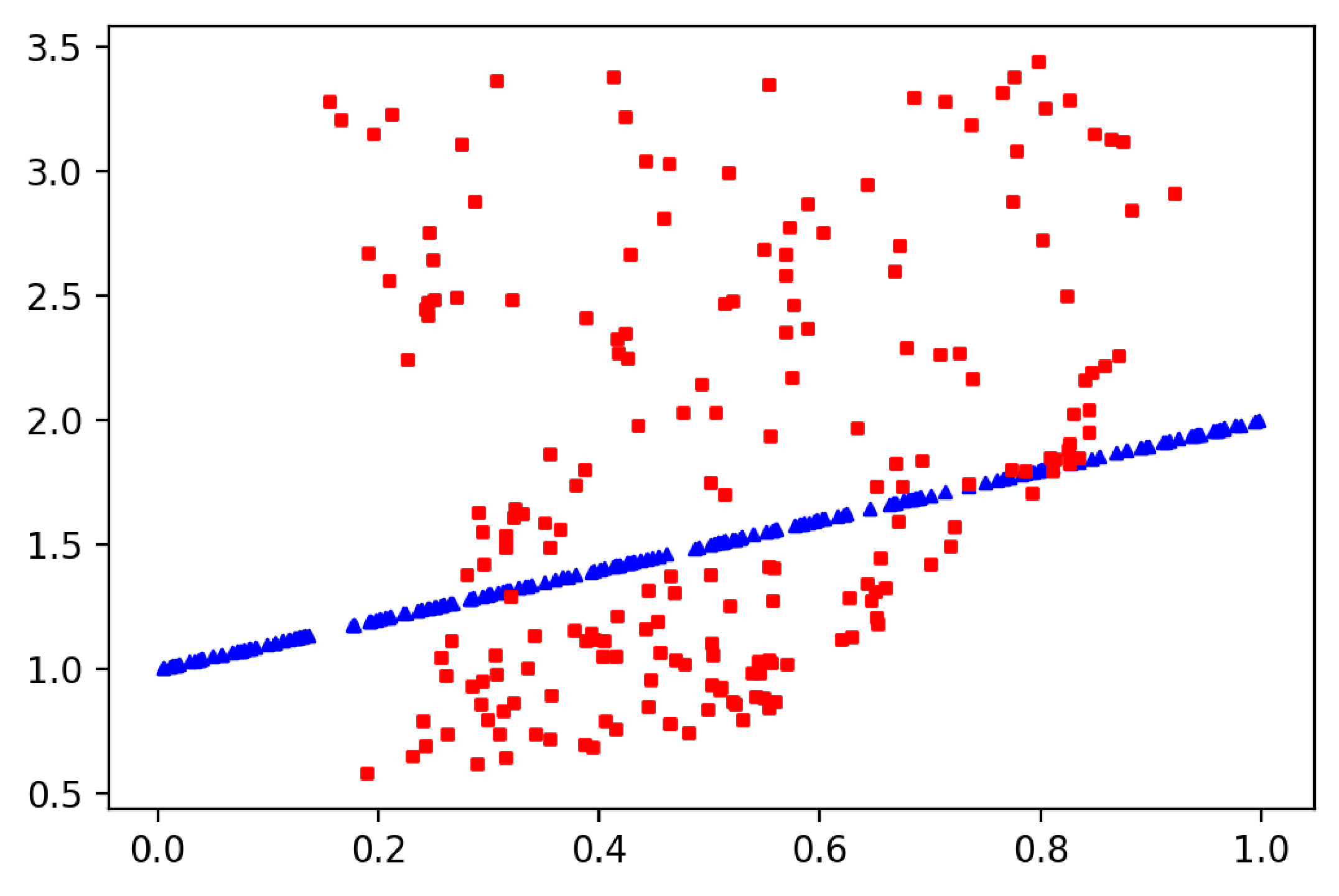}
	\caption*{ $ W_1= $ 0.5646}
\end{subfigure}
\begin{subfigure}[b]{0.24\textwidth}
	\centering
	\includegraphics[width=\textwidth]{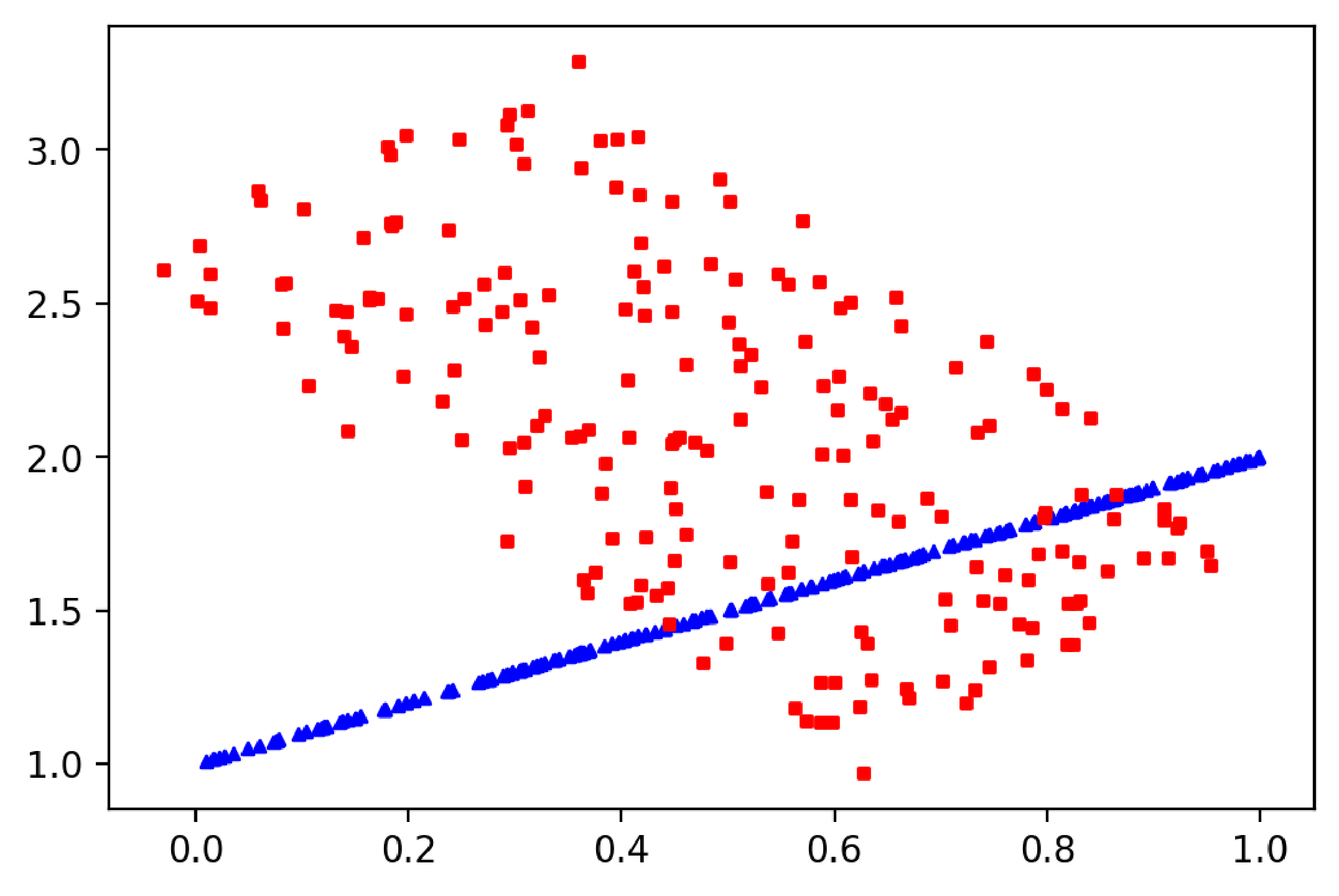}
	\caption*{ $ W_1= $ 0.6518 }
\end{subfigure}
		\\
	\begin{subfigure}[b]{0.24\textwidth}
		\centering
		\includegraphics[width=\textwidth]{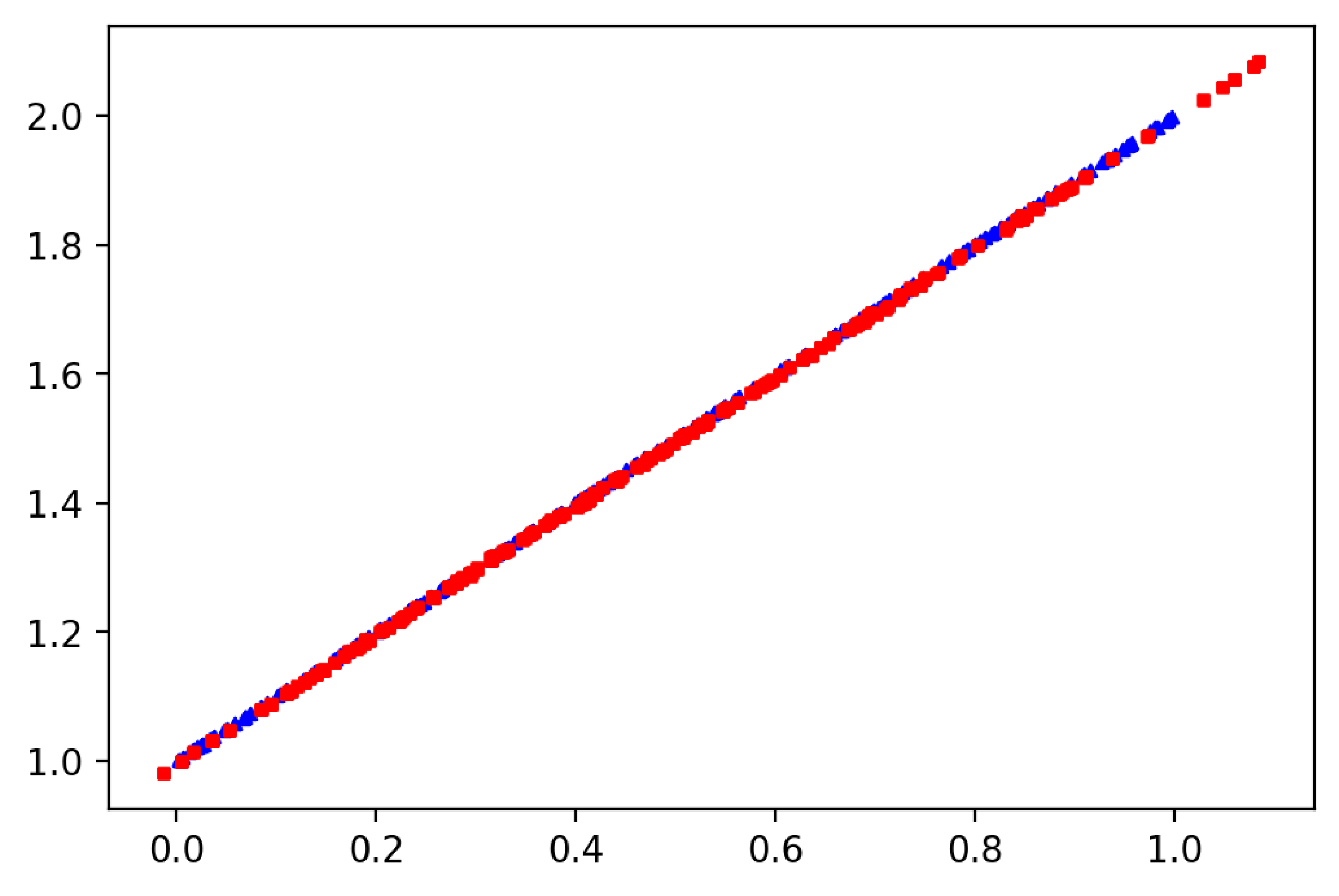}
		\caption*{$ W_1= $  0.0514 }
	\end{subfigure}	
	\begin{subfigure}[b]{0.24\textwidth}
		\centering
		\includegraphics[width=\textwidth]{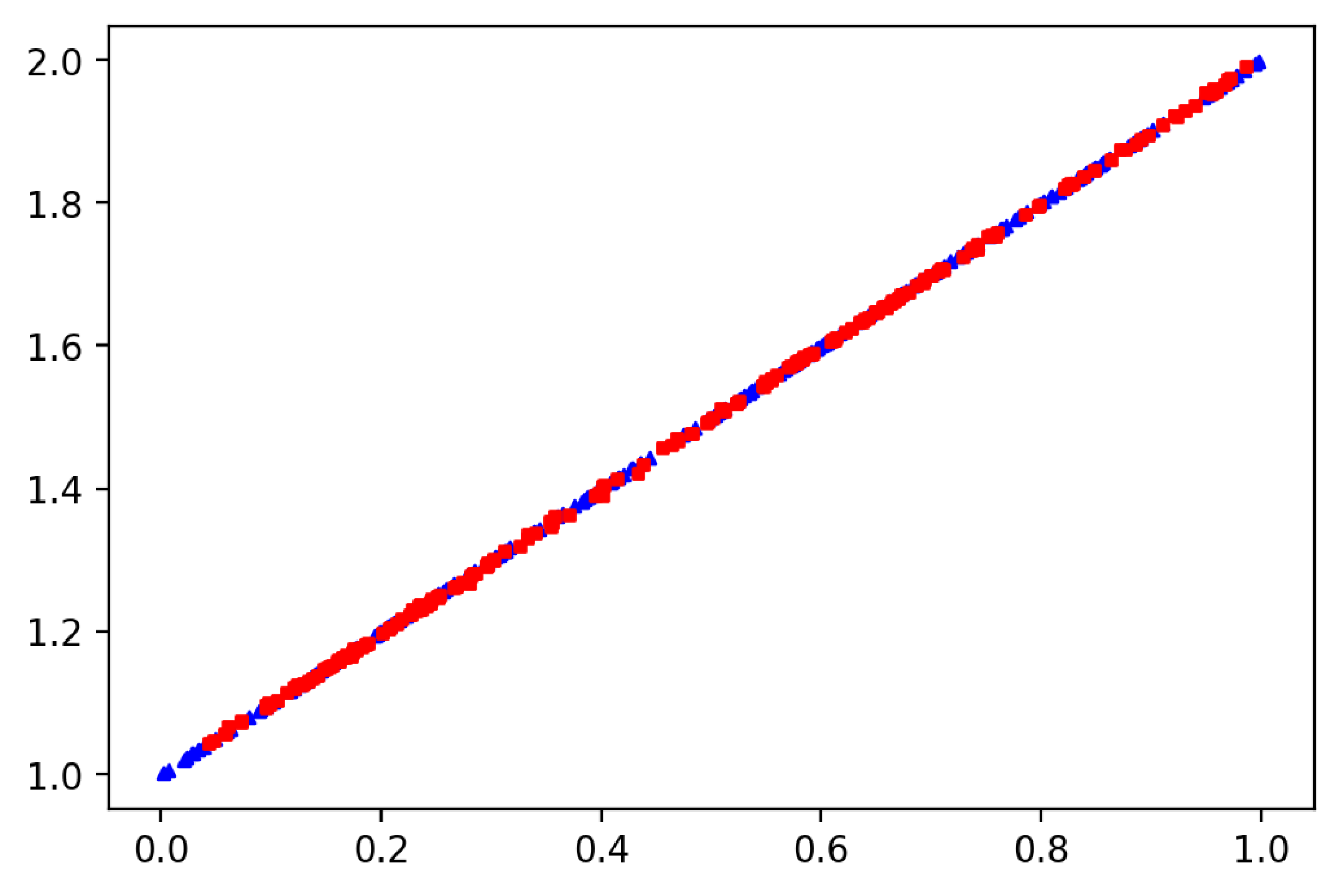}
		\caption*{$ W_1= $ 0.0495 }
	\end{subfigure}
		\begin{subfigure}[b]{0.24\textwidth}
		\centering
		\includegraphics[width=\textwidth]{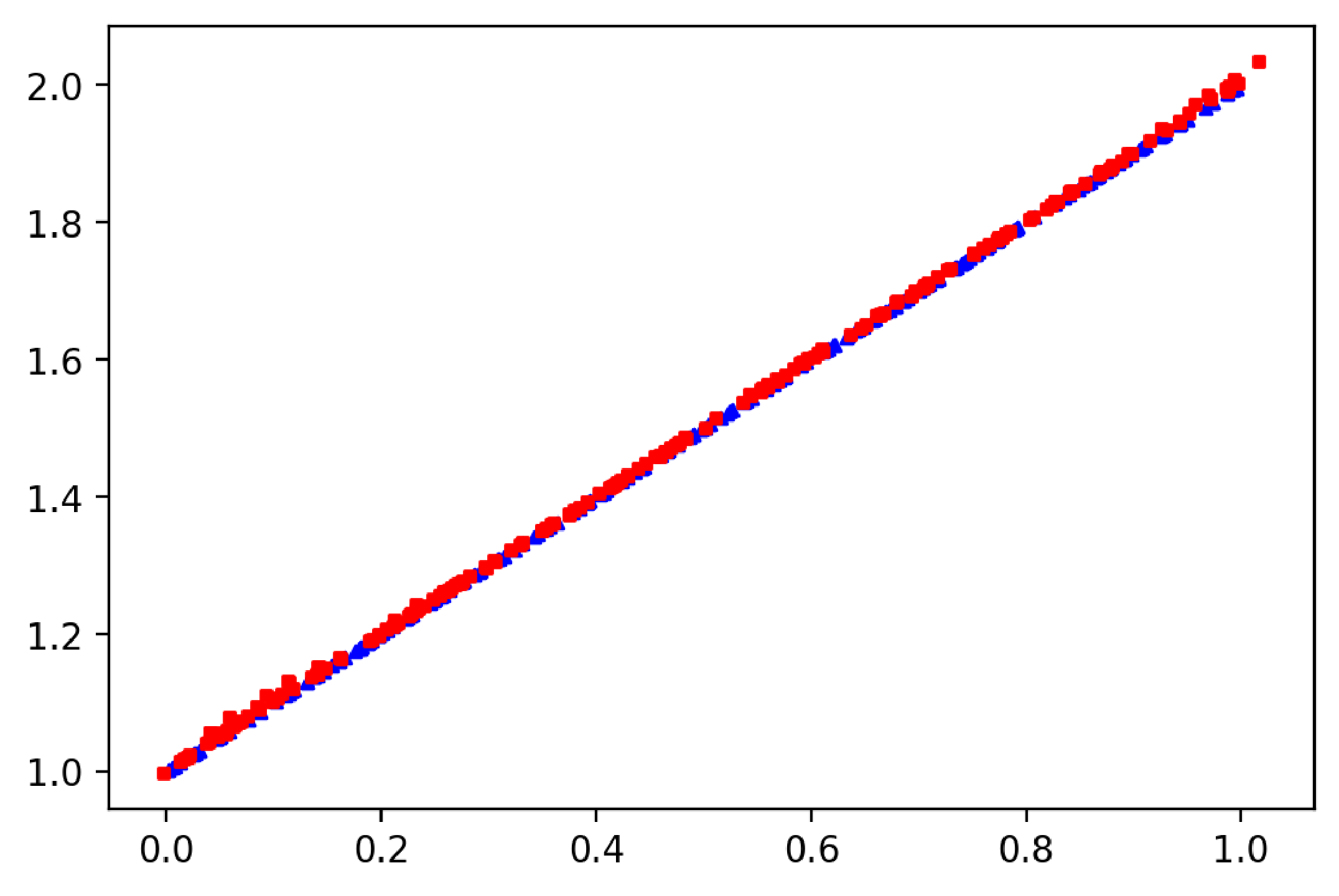}
		\caption*{ $ W_1= $ 0.0470 }
	\end{subfigure}
	\begin{subfigure}[b]{0.24\textwidth}
	\centering
	\includegraphics[width=\textwidth]{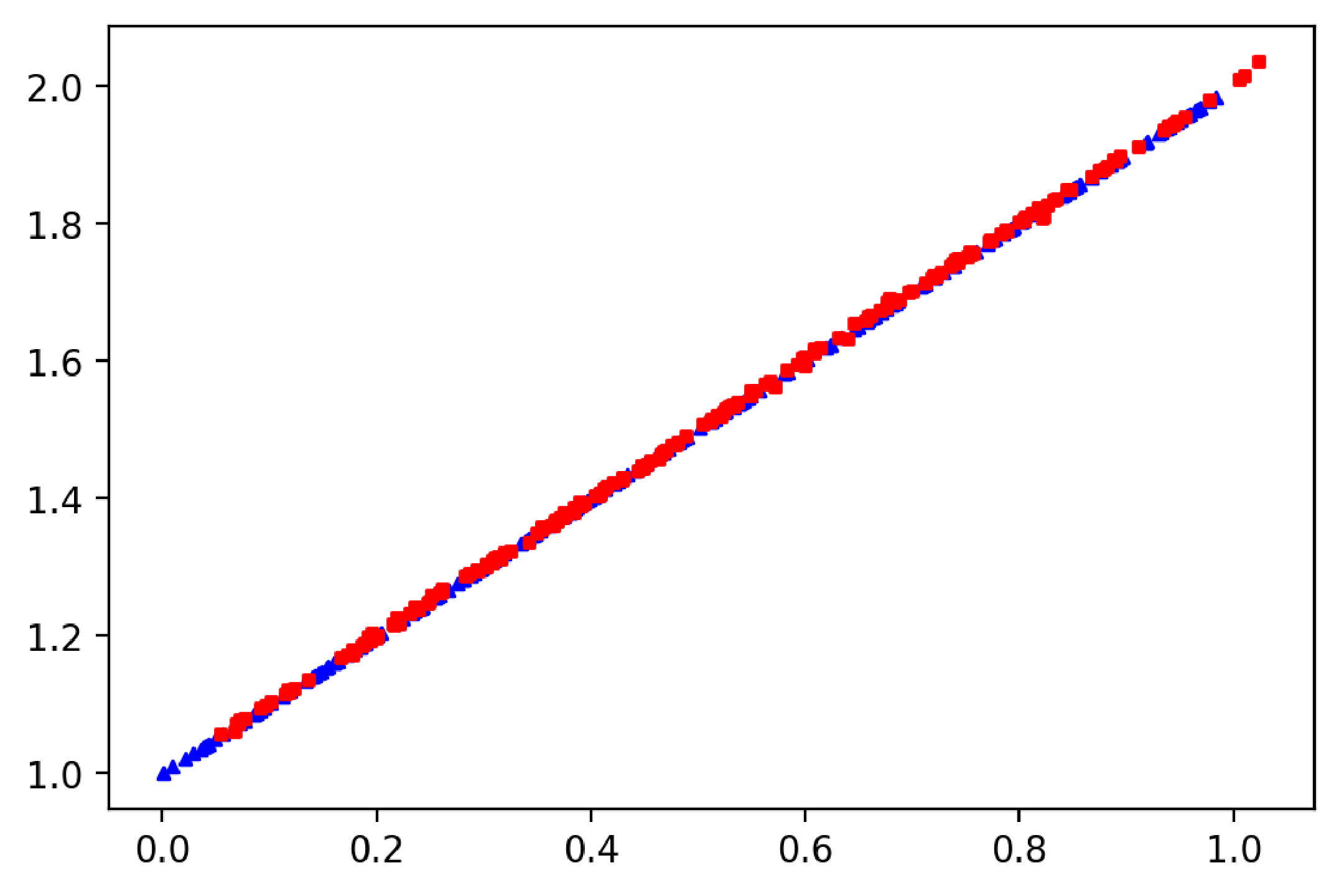}
	\caption*{ $ W_1= $ 0.0471 }
\end{subfigure}
	\\
	\begin{subfigure}[b]{0.24\textwidth}
		\centering
		\includegraphics[width=\textwidth]{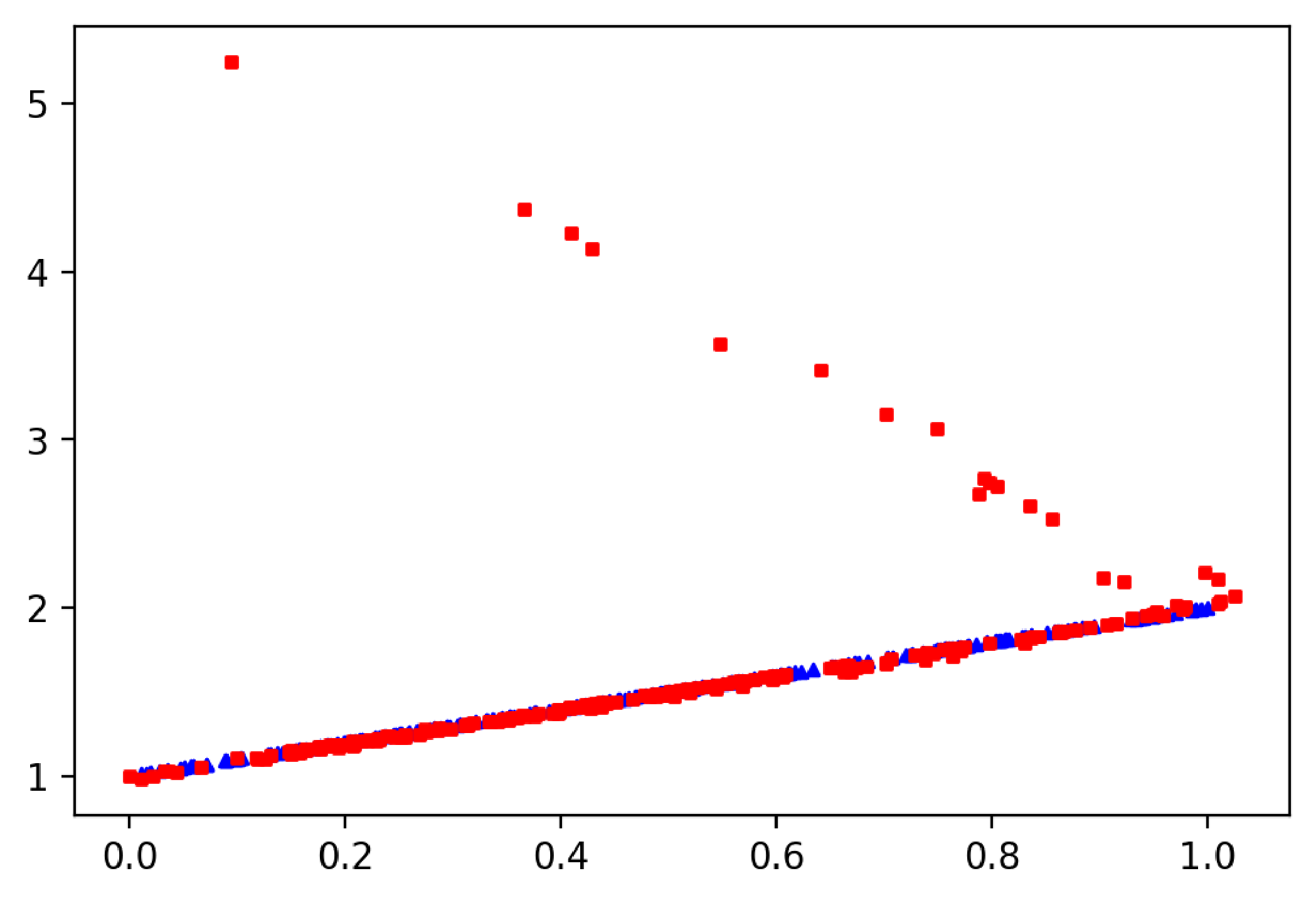}
		\caption*{$ W_1= $ 0.1560}
	\end{subfigure}	
	\begin{subfigure}[b]{0.24\textwidth}
		\centering
		\includegraphics[width=\textwidth]{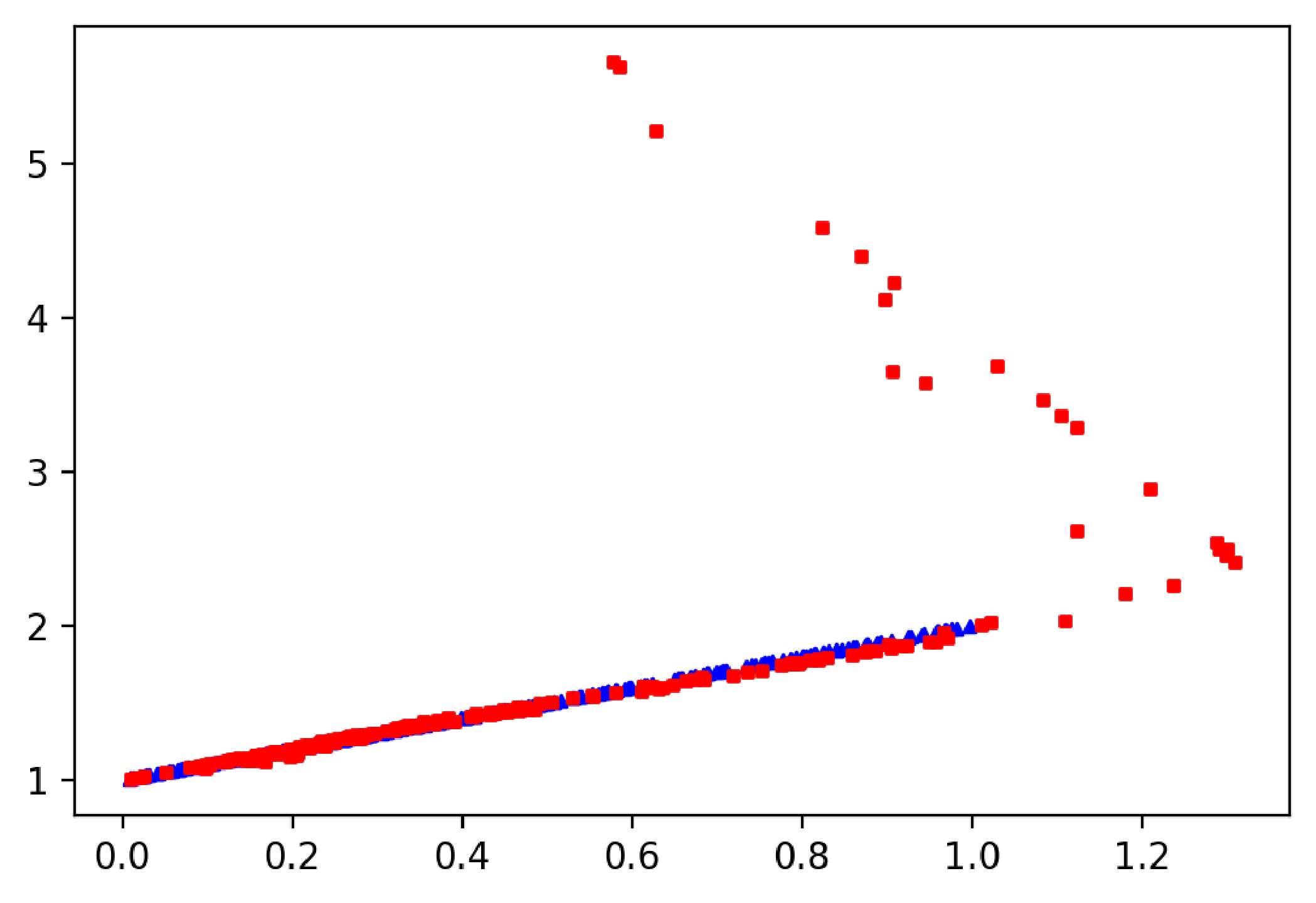}
		\caption*{ $ W_1= $ 0.2312 }
	\end{subfigure}
	\begin{subfigure}[b]{0.24\textwidth}
		\centering
		\includegraphics[width=\textwidth]{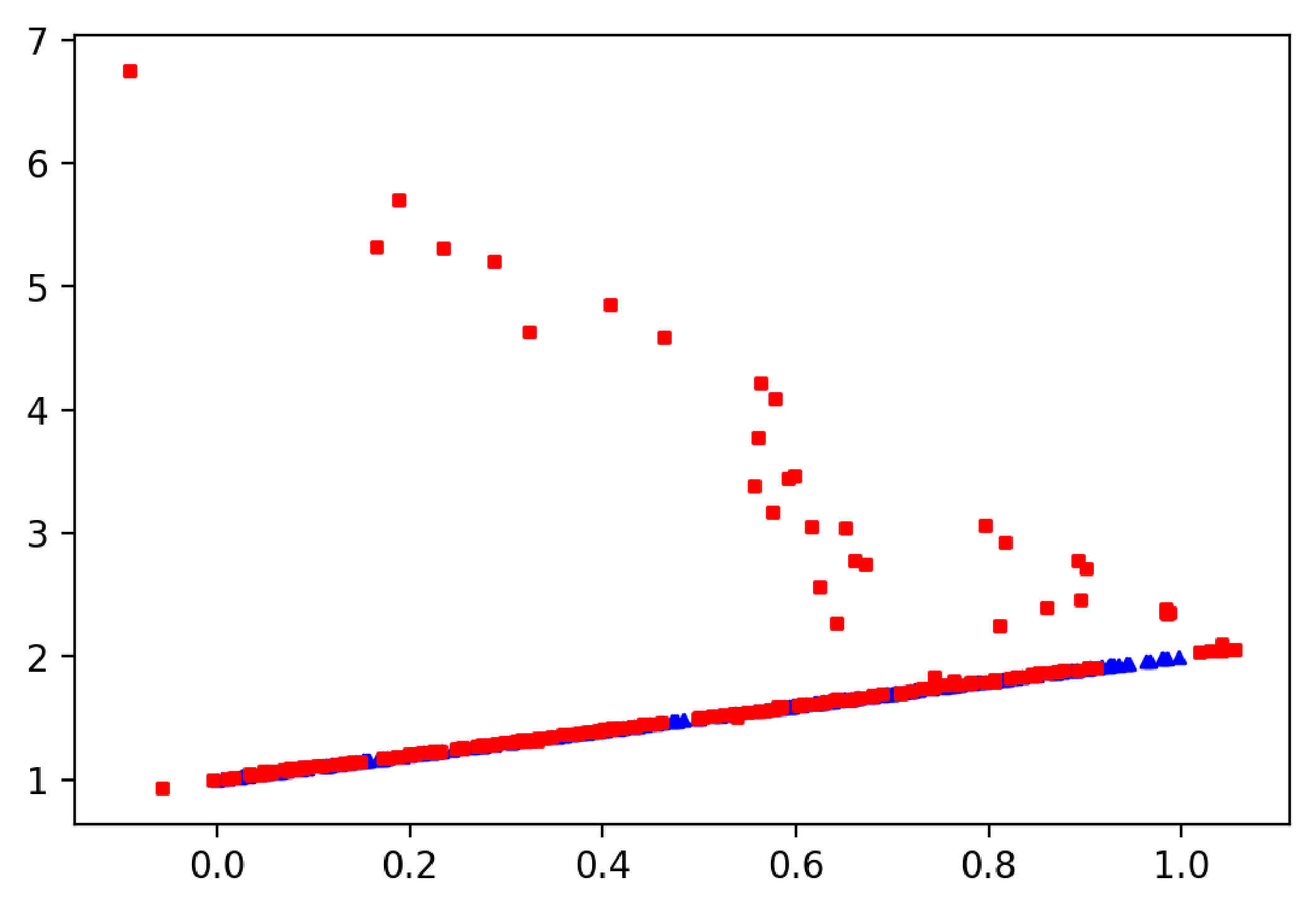}
		\caption*{ $ W_1= $ 0.2938 }
	\end{subfigure}
	\begin{subfigure}[b]{0.24\textwidth}
		\centering
		\includegraphics[width=\textwidth]{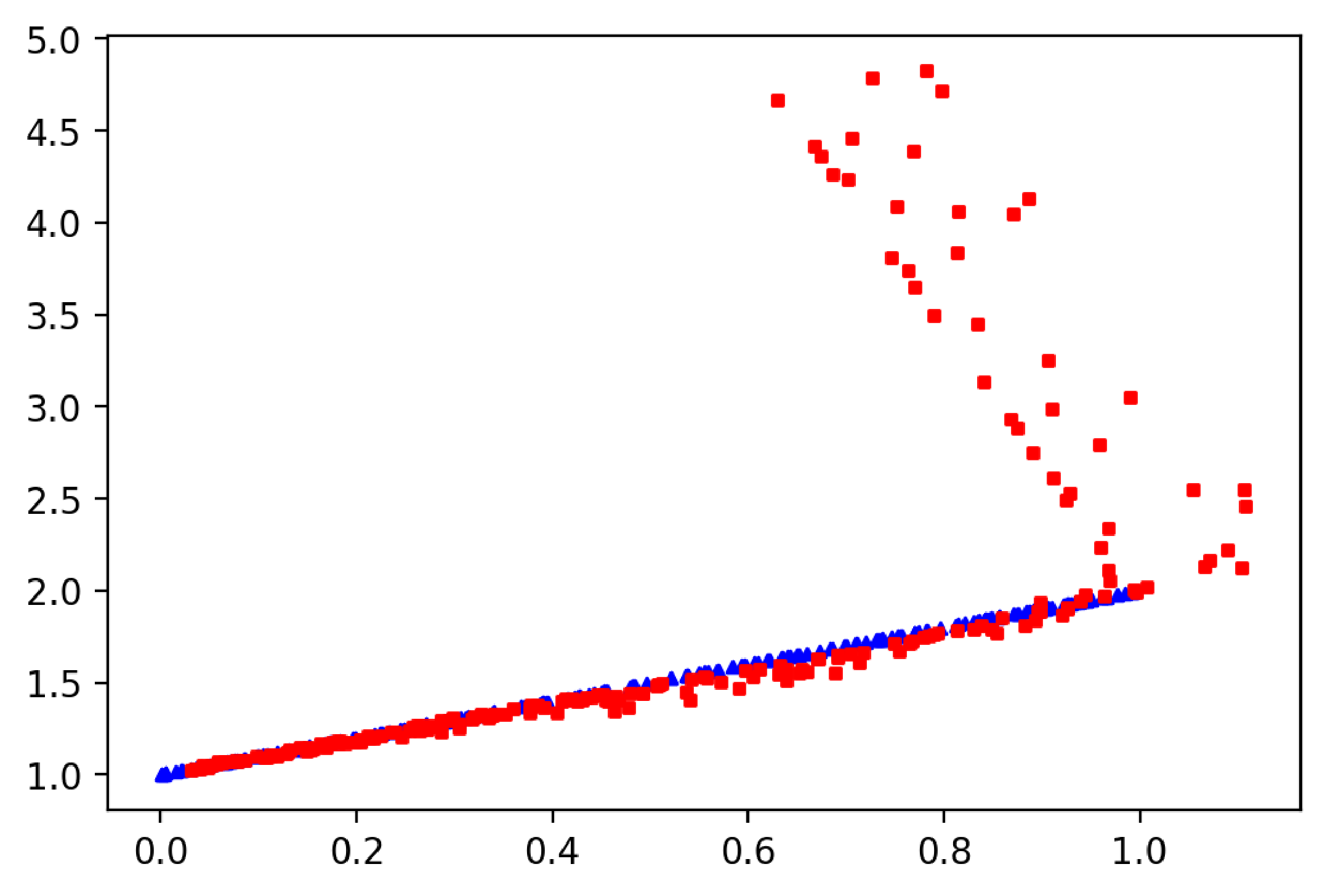}
		\caption*{ $ W_1= $ 0.3954}
	\end{subfigure}
\\
	\begin{subfigure}[b]{0.24\textwidth}
		\centering
		\includegraphics[width=\textwidth]{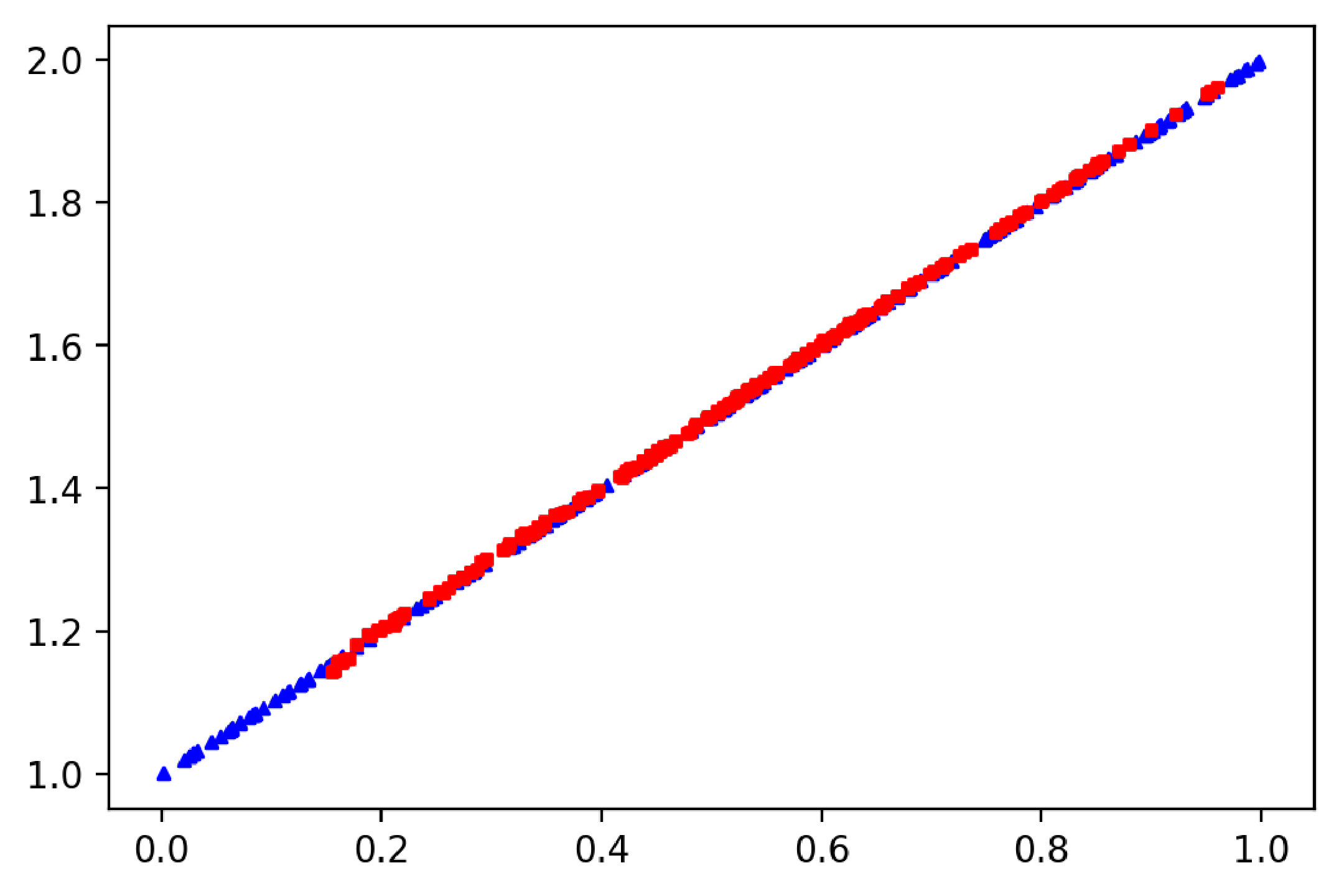}
		\caption*{ $ W_1= $  0.1102 }
	\end{subfigure}	
	\begin{subfigure}[b]{0.24\textwidth}
	\centering
	\includegraphics[width=\textwidth]{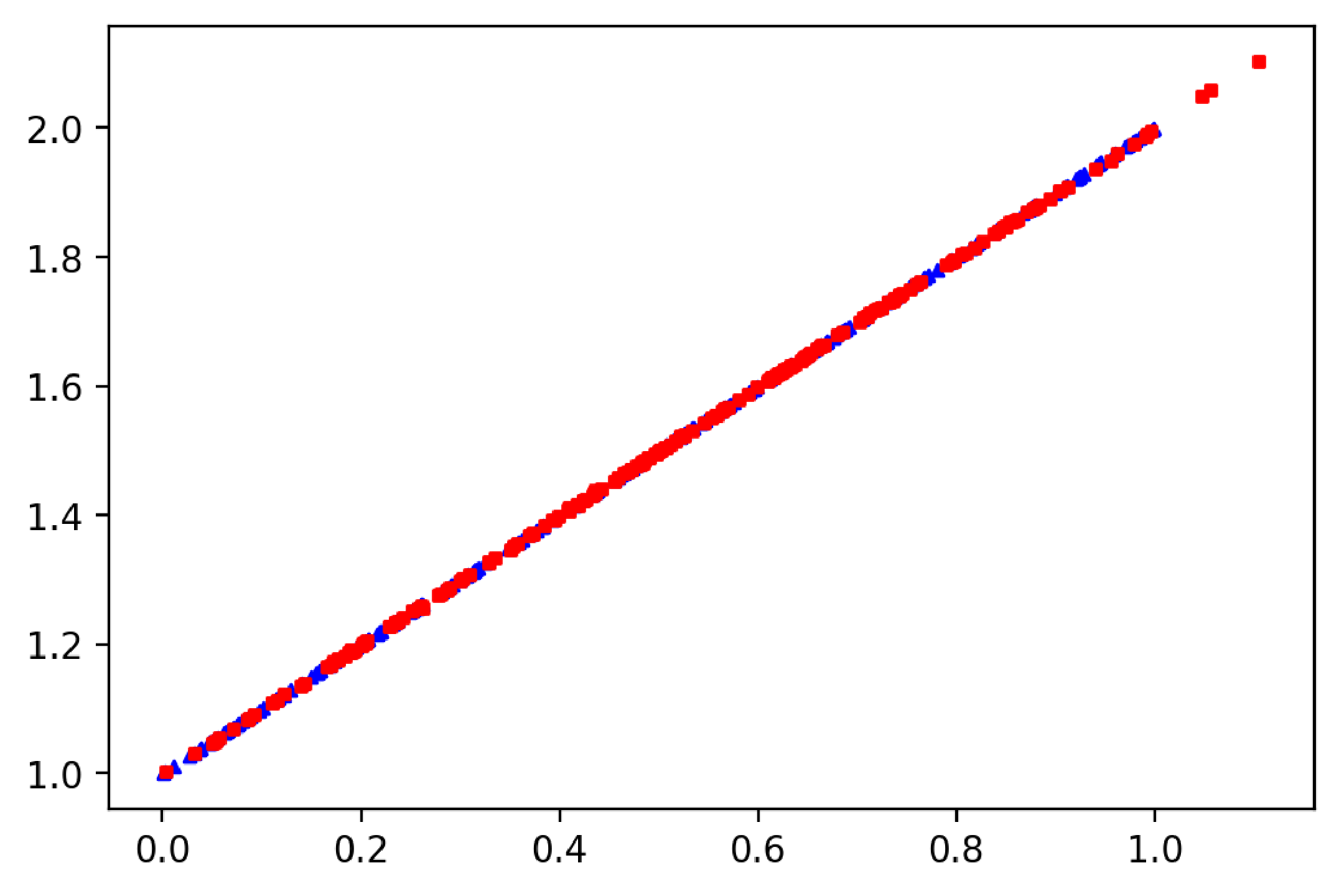}
	\caption*{ $ W_1= $  0.0983 }
\end{subfigure}
		\begin{subfigure}[b]{0.24\textwidth}
		\centering
		\includegraphics[width=\textwidth]{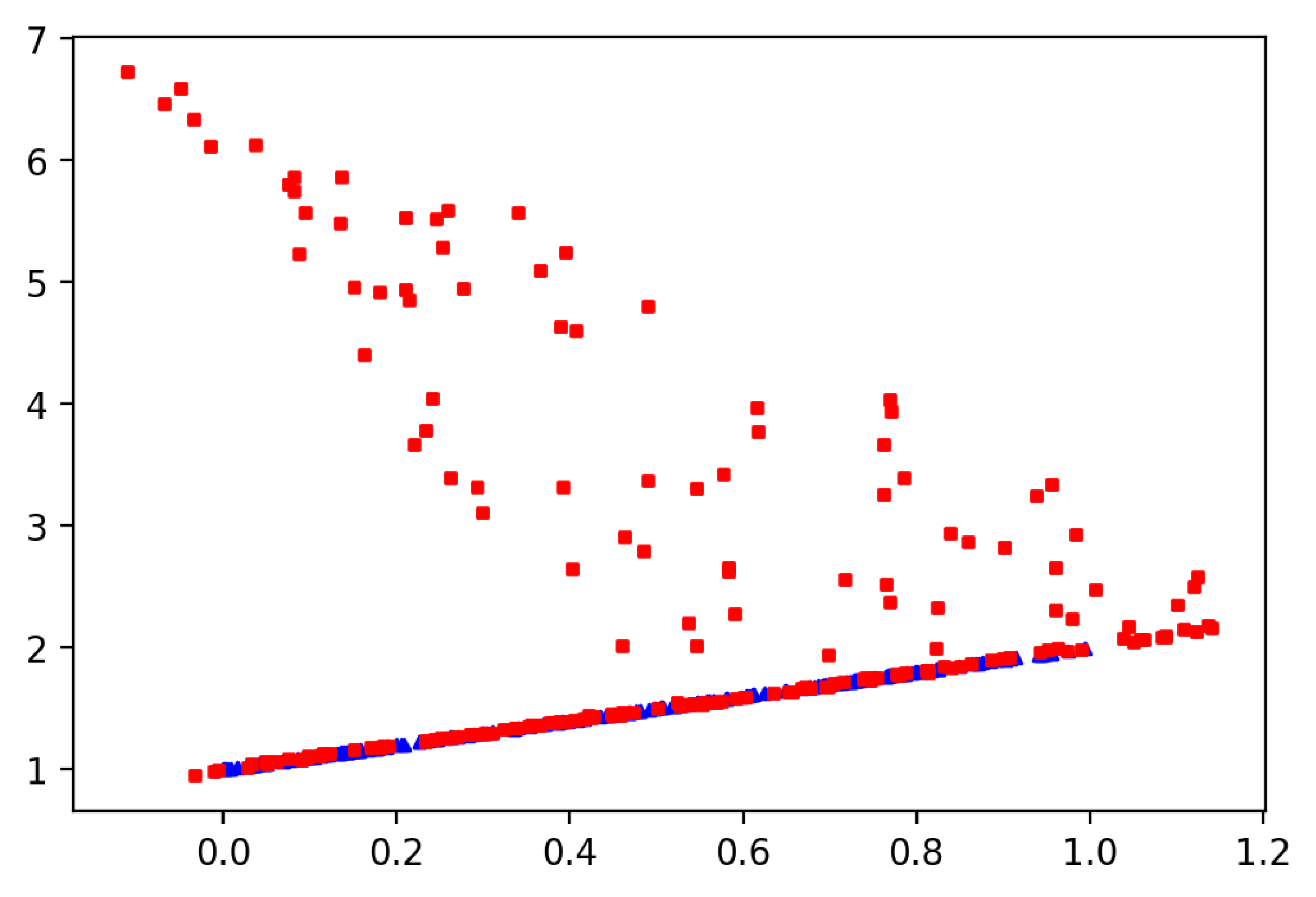}
		\caption*{ $ W_1= $ 0.3229 }
	\end{subfigure}
	\begin{subfigure}[b]{0.24\textwidth}
		\centering
		\includegraphics[width=\textwidth]{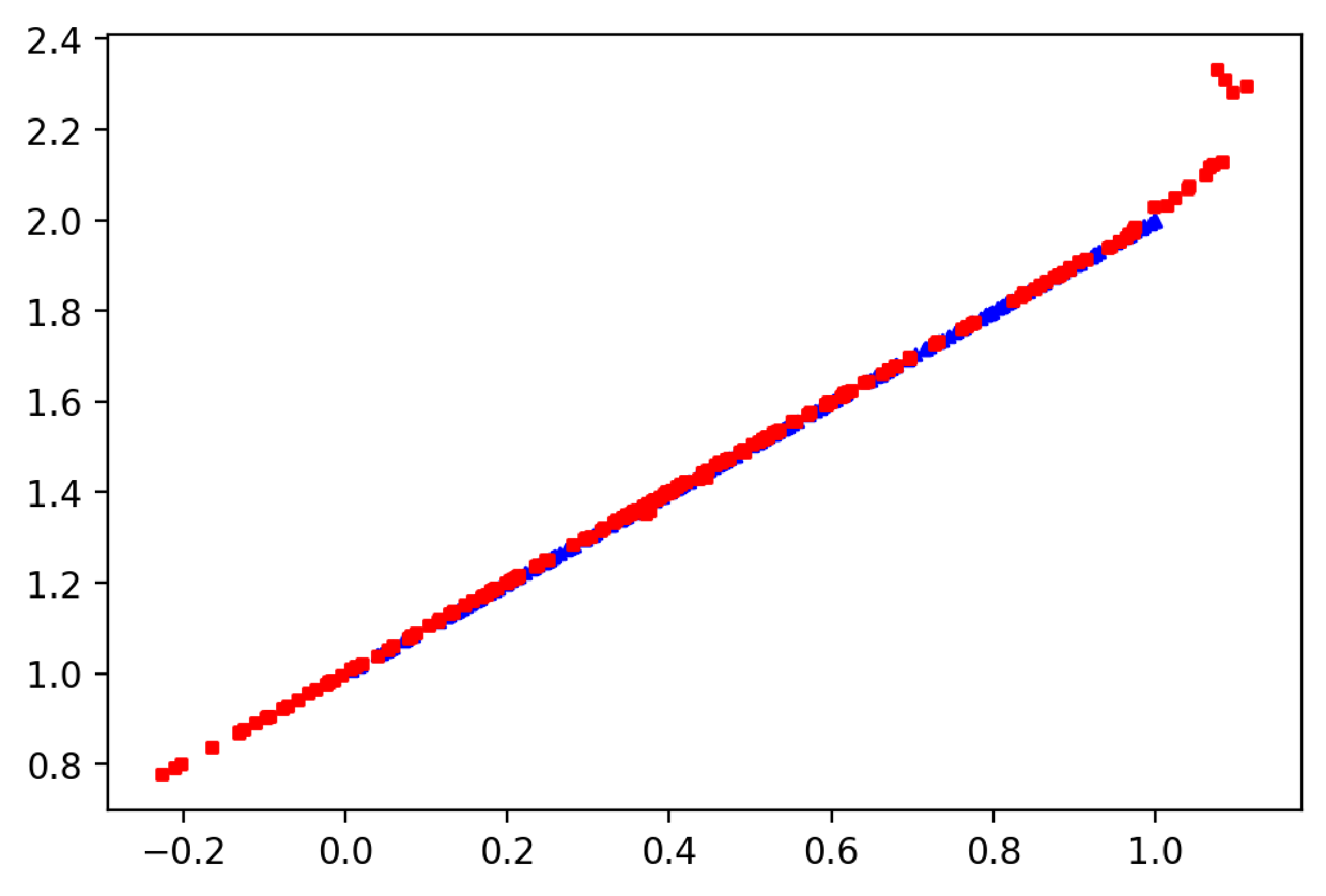}
		\caption*{ $ W_1= $ 0.1206}
	\end{subfigure}
\\
	\begin{subfigure}[b]{0.24\textwidth}
	\centering
	\includegraphics[width=\textwidth]{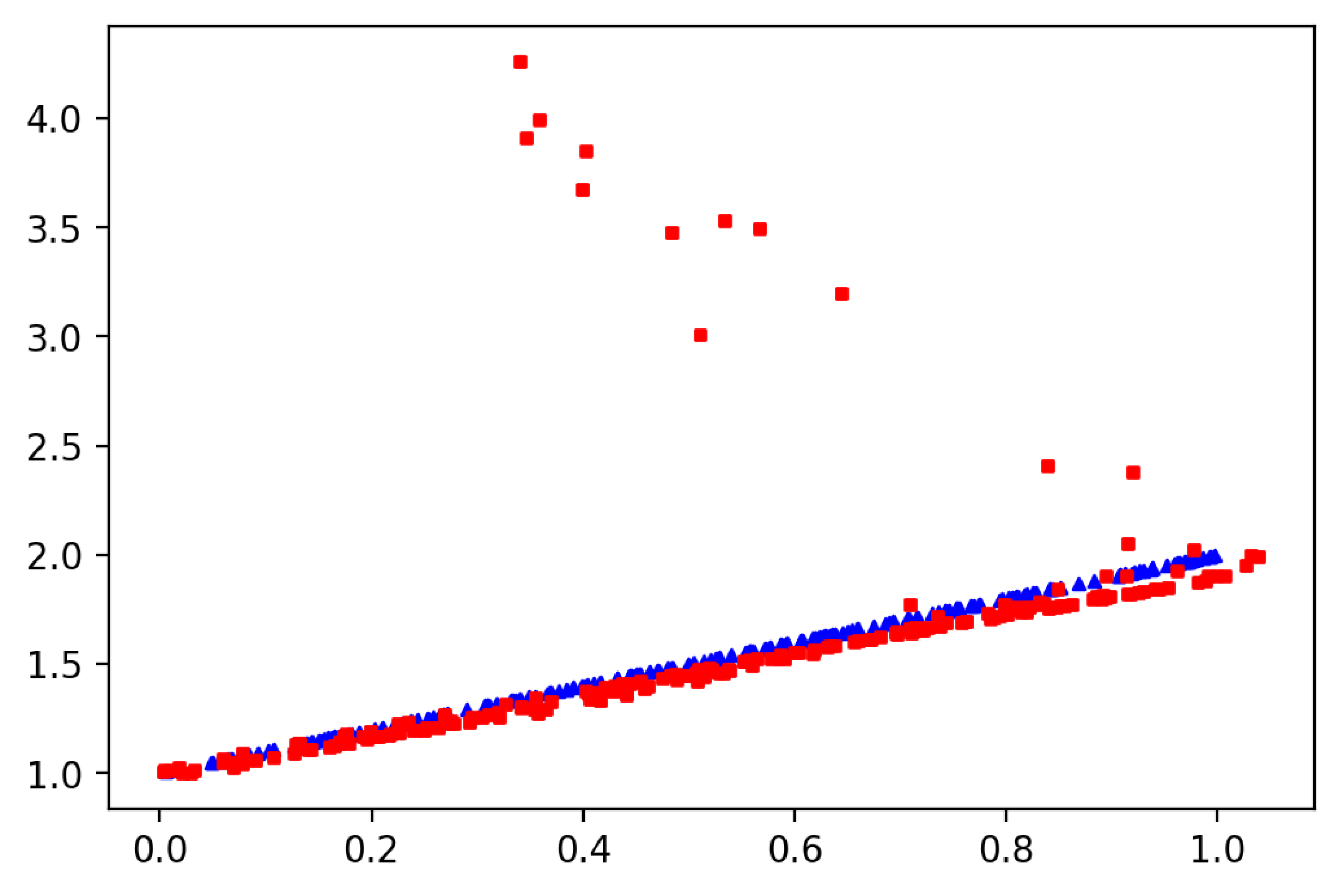}
	\caption*{ $ W_1= $  0.1932}
\end{subfigure}	
\begin{subfigure}[b]{0.24\textwidth}
	\centering
	\includegraphics[width=\textwidth]{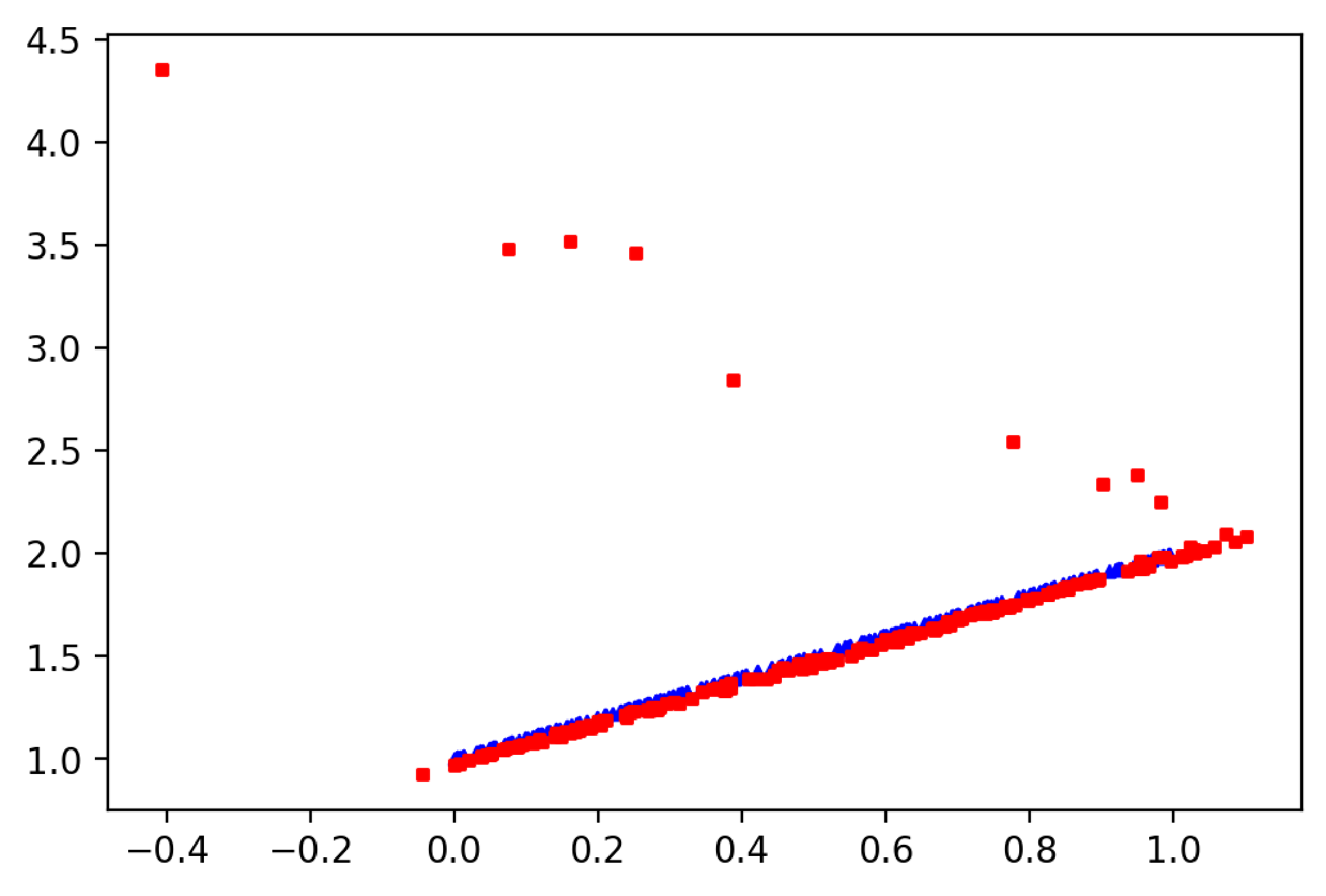}
	\caption*{ $ W_1= $  0.2077 }
\end{subfigure}
\begin{subfigure}[b]{0.24\textwidth}
	\centering
	\includegraphics[width=\textwidth]{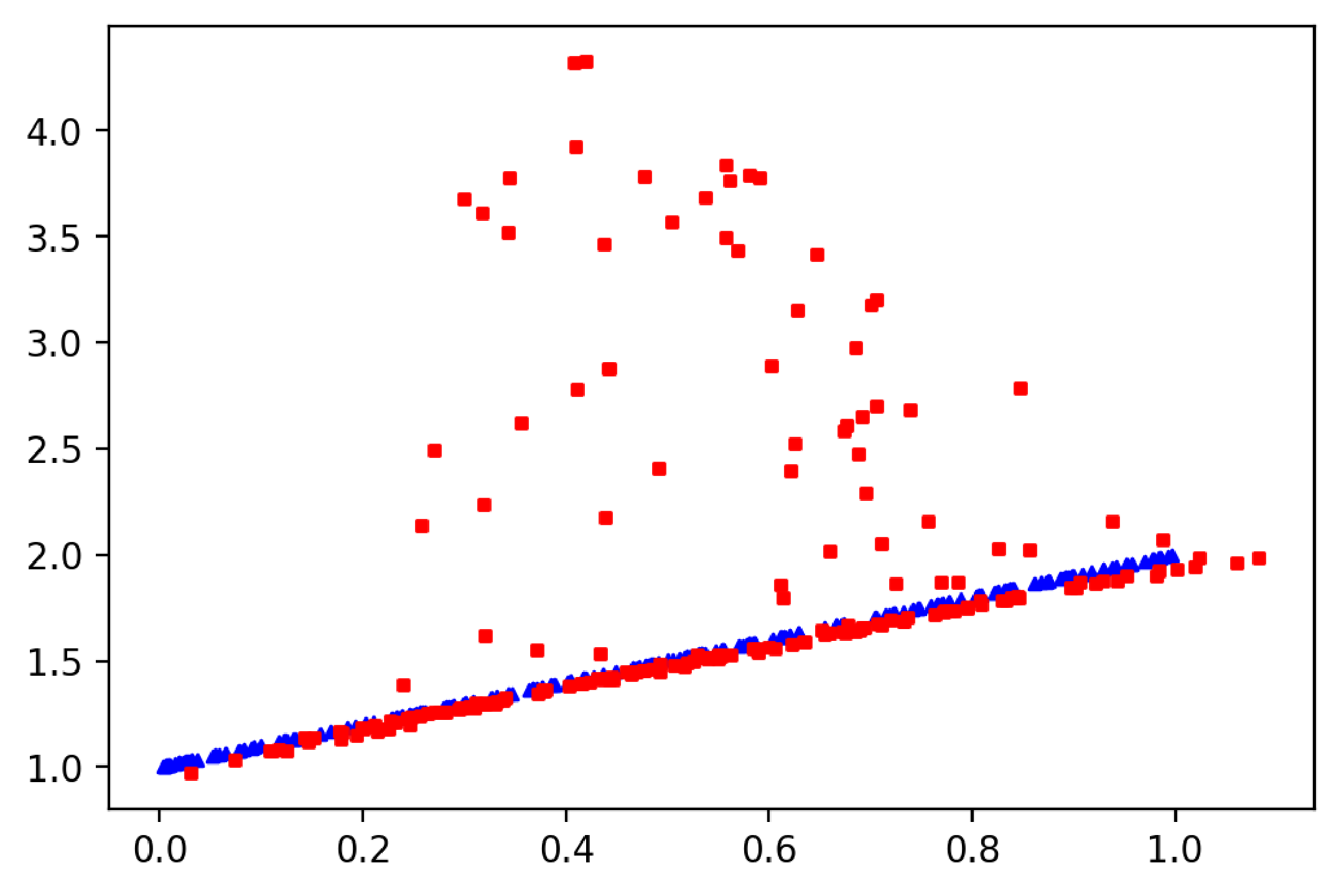}
	\caption*{ $ W_1= $ 0.3911 }
\end{subfigure}
\begin{subfigure}[b]{0.24\textwidth}
	\centering
	\includegraphics[width=\textwidth]{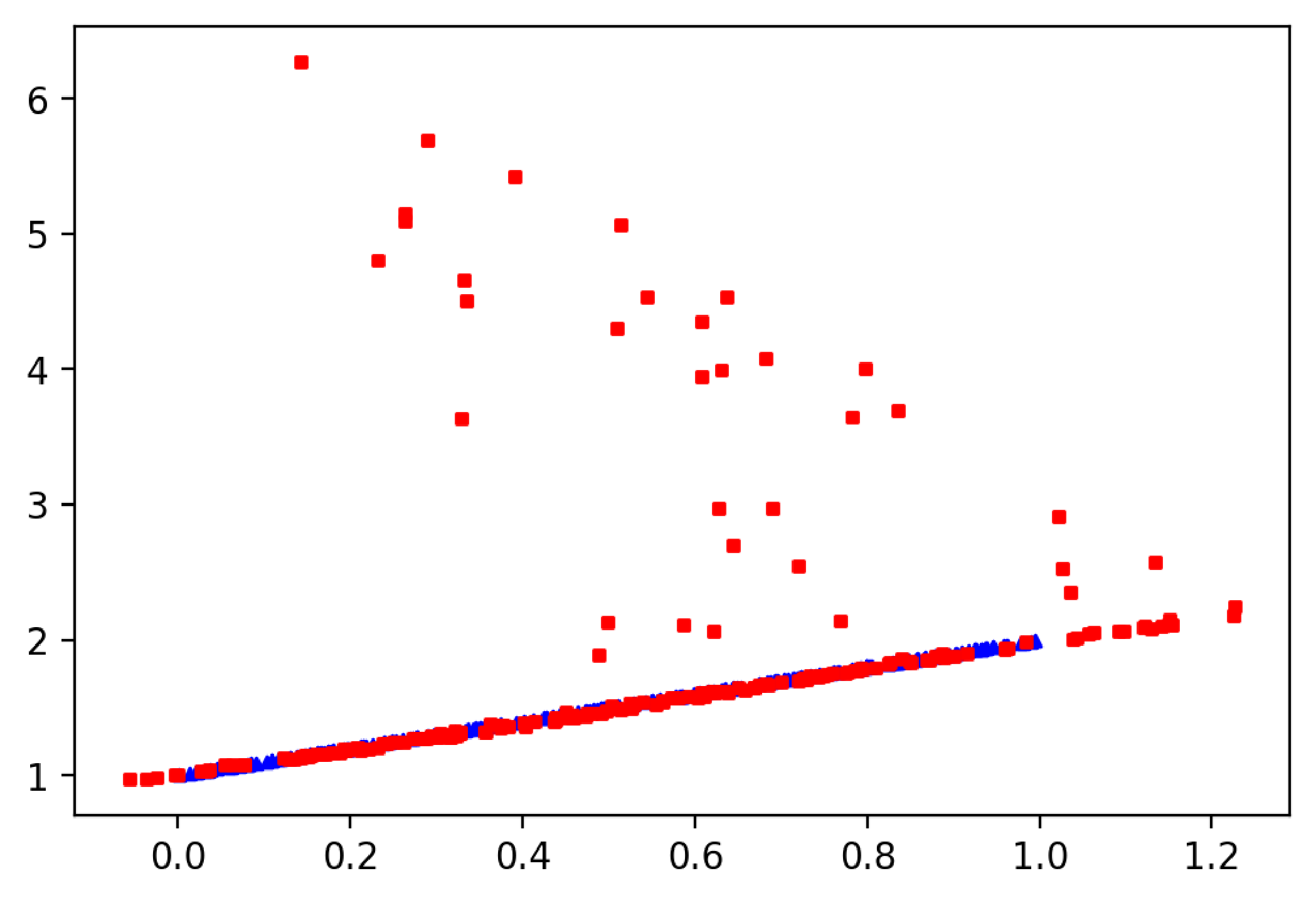}
	\caption*{ $ W_1= $ 0.3821}
\end{subfigure}
\\
\begin{subfigure}[b]{0.24\textwidth}
	\centering
	\includegraphics[width=\textwidth]{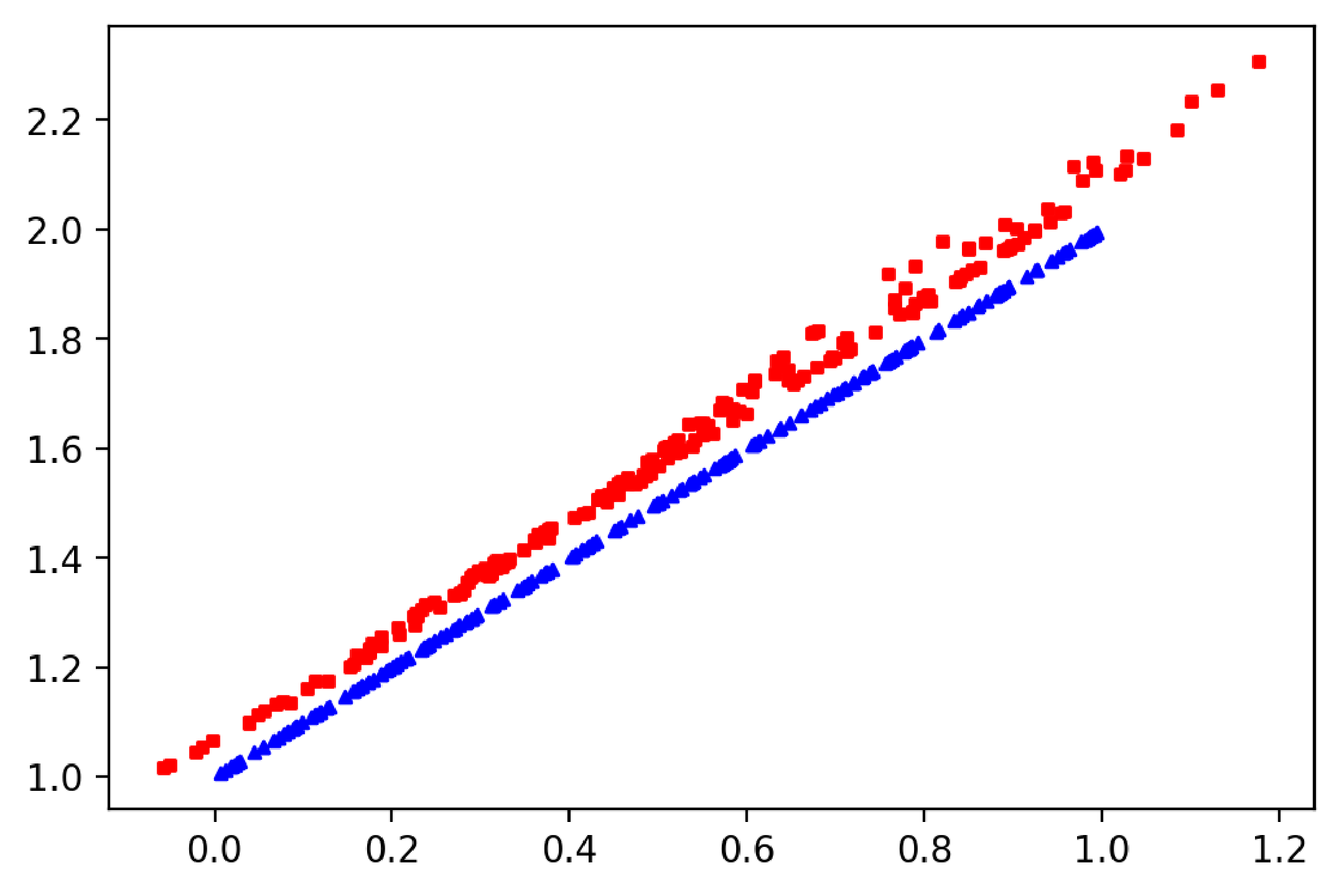}
	\caption*{ $ W_1= $  0.1164}
\end{subfigure}	
\begin{subfigure}[b]{0.24\textwidth}
	\centering
	\includegraphics[width=\textwidth]{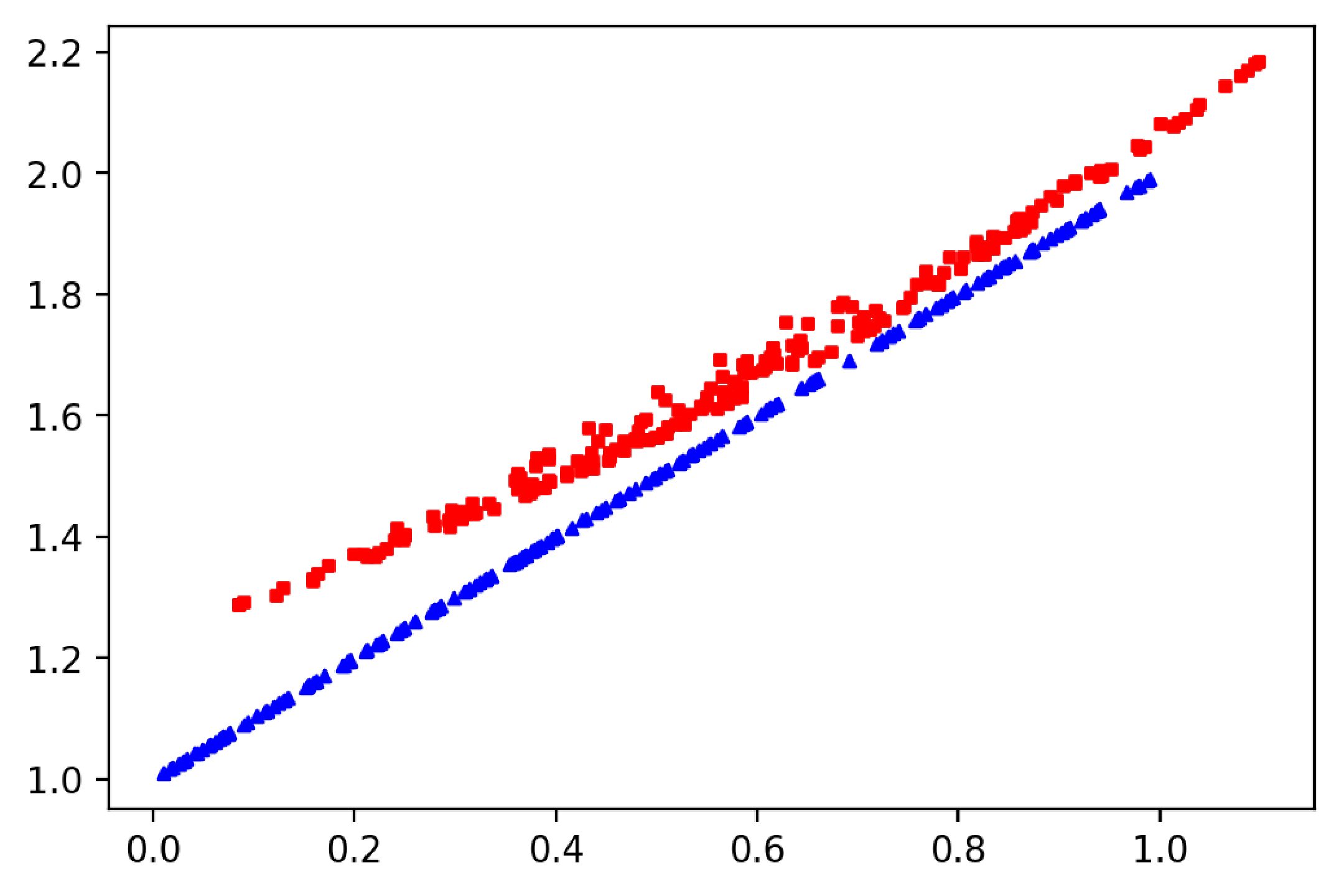}
	\caption*{ $ W_1= $  0.1985 }
\end{subfigure}
\begin{subfigure}[b]{0.24\textwidth}
	\centering
	\includegraphics[width=\textwidth]{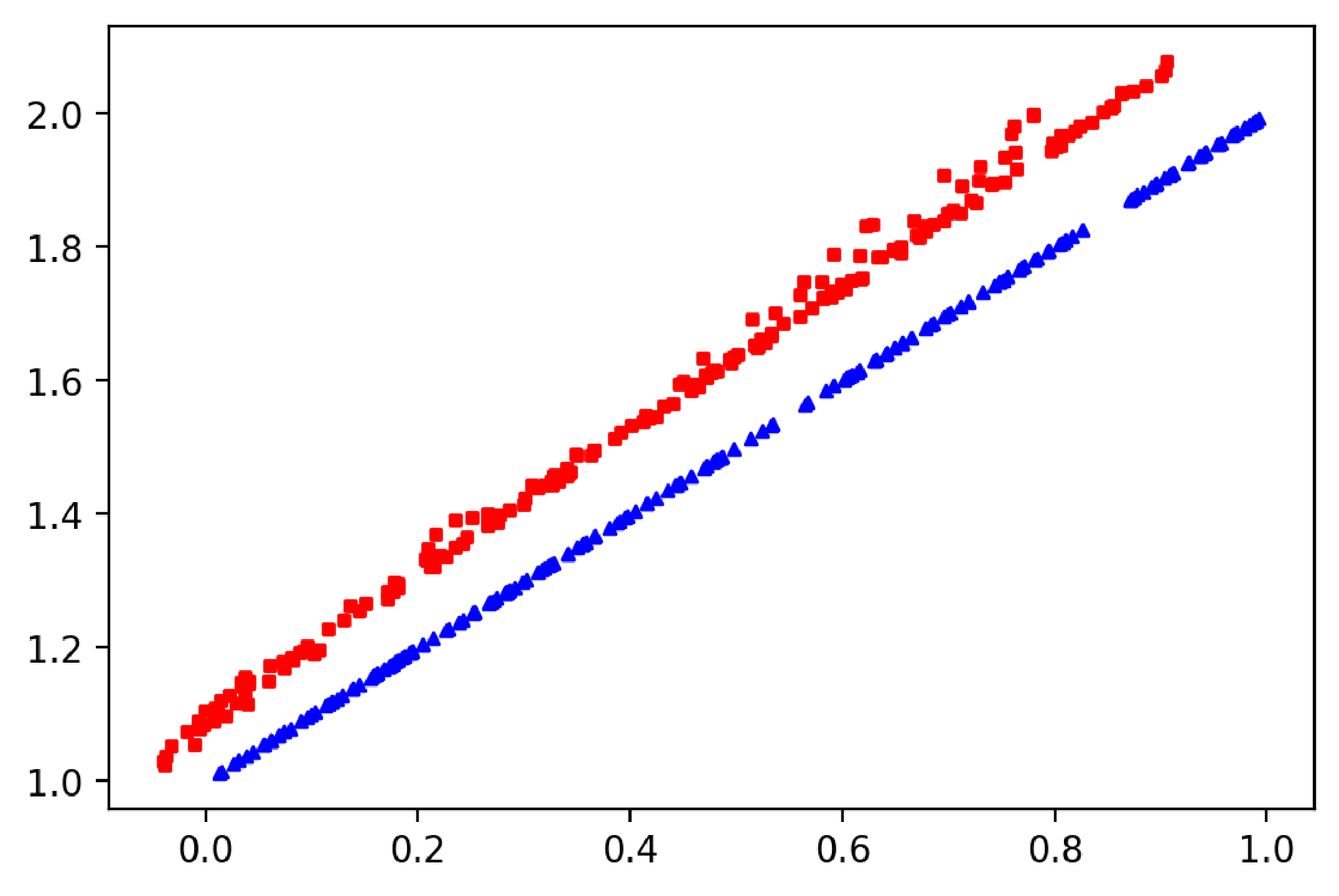}
	\caption*{ $ W_1= $ 0.1381 }
\end{subfigure}
\begin{subfigure}[b]{0.24\textwidth}
	\centering
	\includegraphics[width=\textwidth]{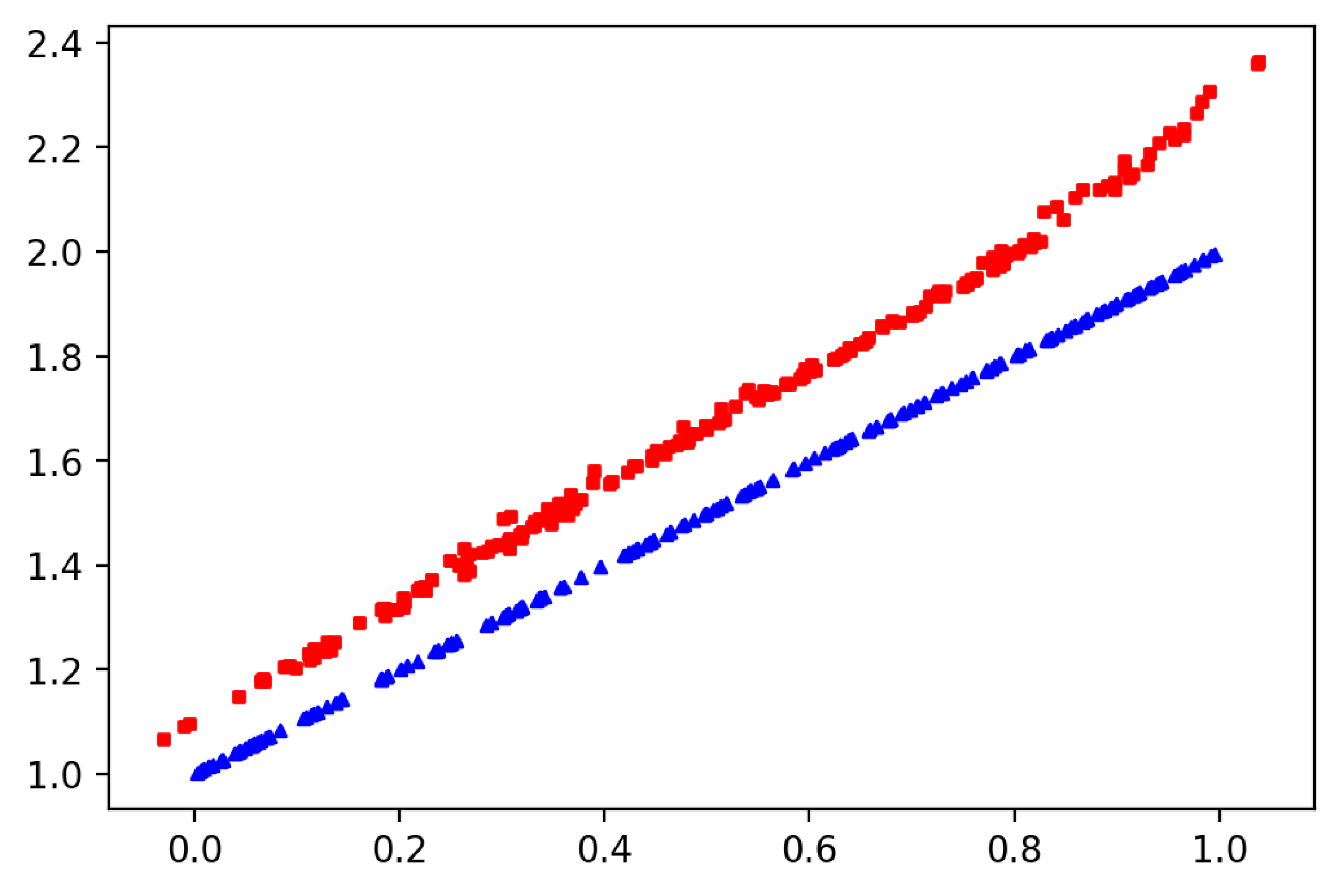}
	\caption*{ $ W_1= $ 0.2056}
\end{subfigure}
\\
\begin{subfigure}[b]{0.24\textwidth}
	\centering
	\includegraphics[width=\textwidth]{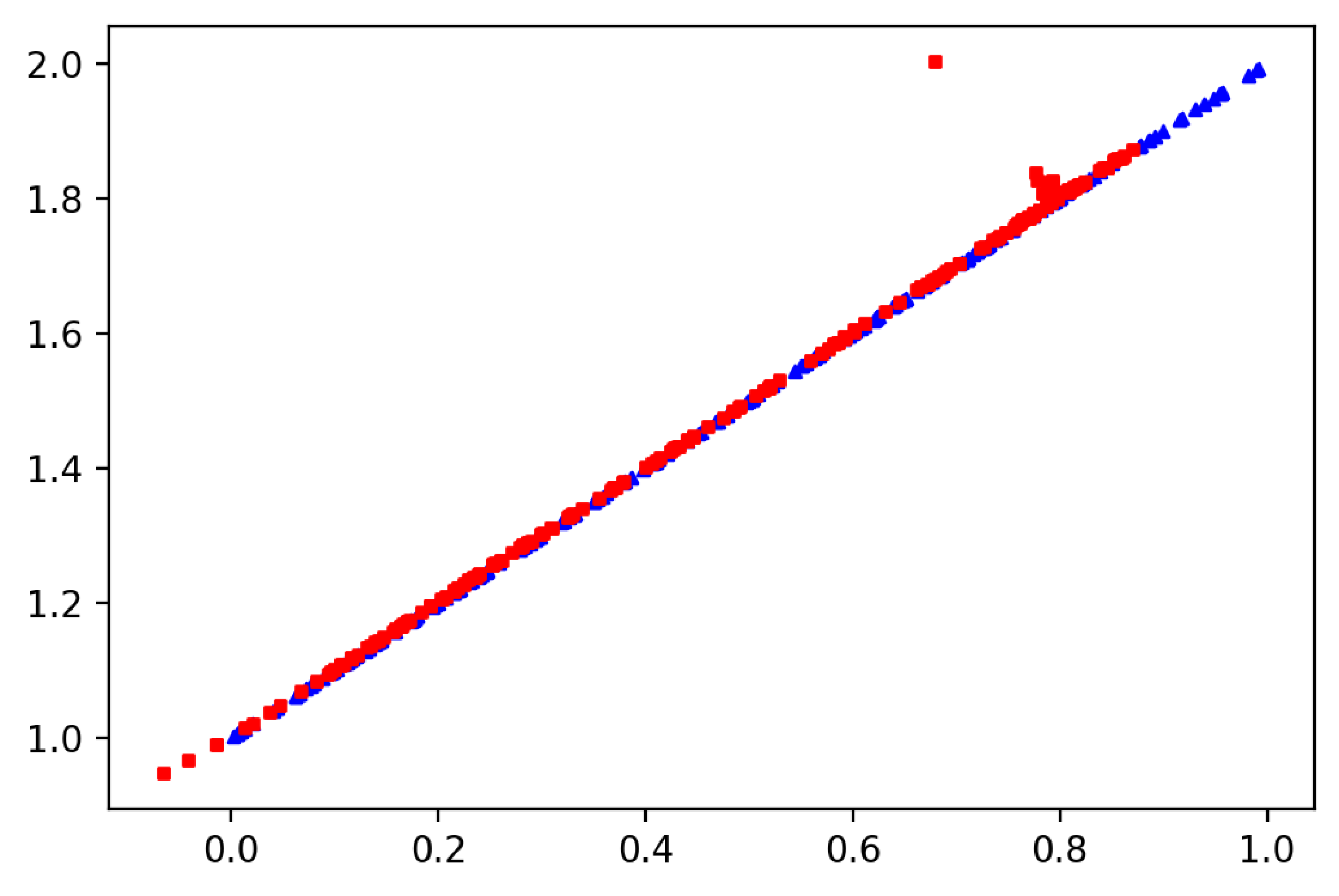}
	\caption*{ $ W_1= $  0.0607}
\end{subfigure}	
\begin{subfigure}[b]{0.24\textwidth}
	\centering
	\includegraphics[width=\textwidth]{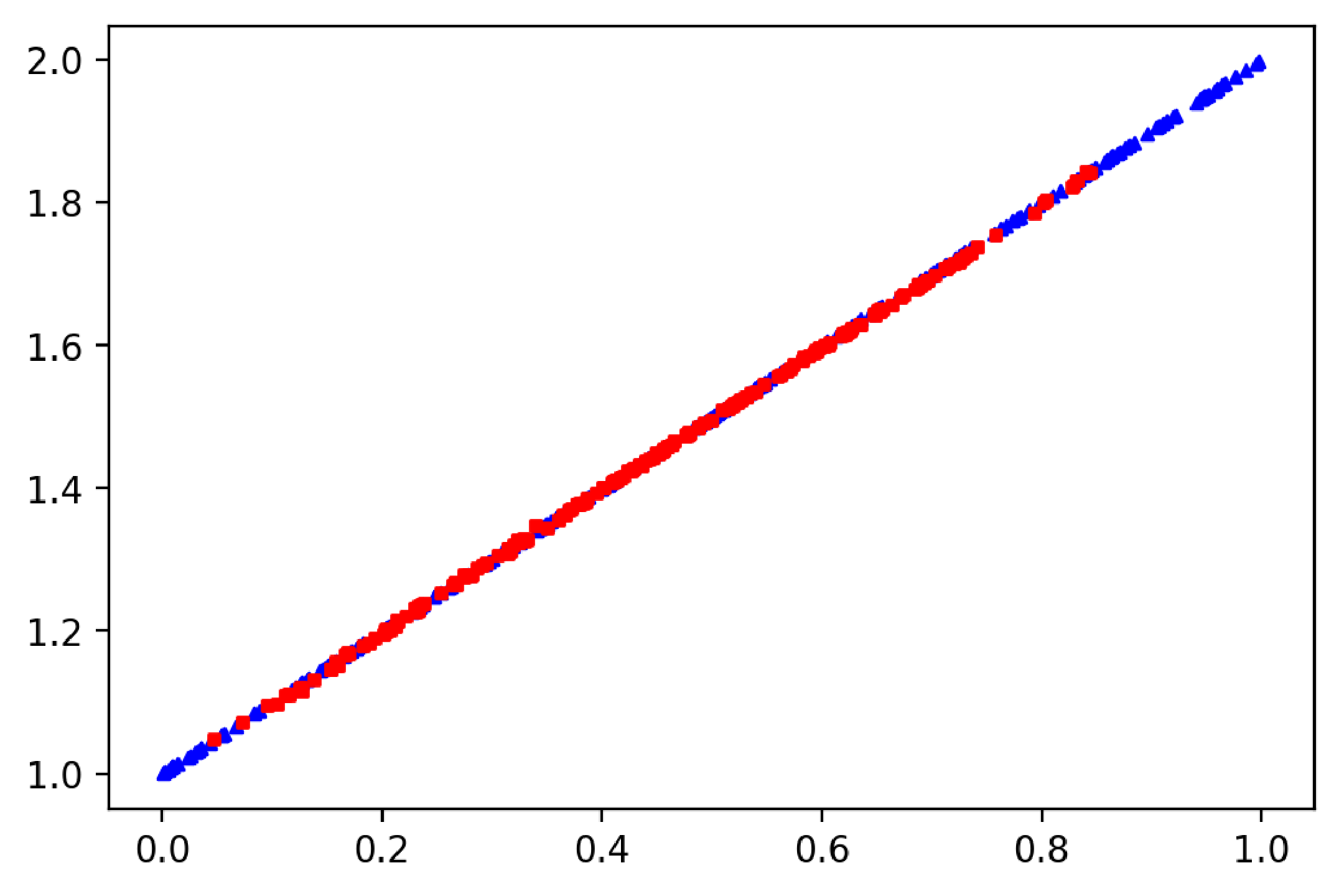}
	\caption*{ $ W_1= $  0.1446 }
\end{subfigure}
\begin{subfigure}[b]{0.24\textwidth}
	\centering
	\includegraphics[width=\textwidth]{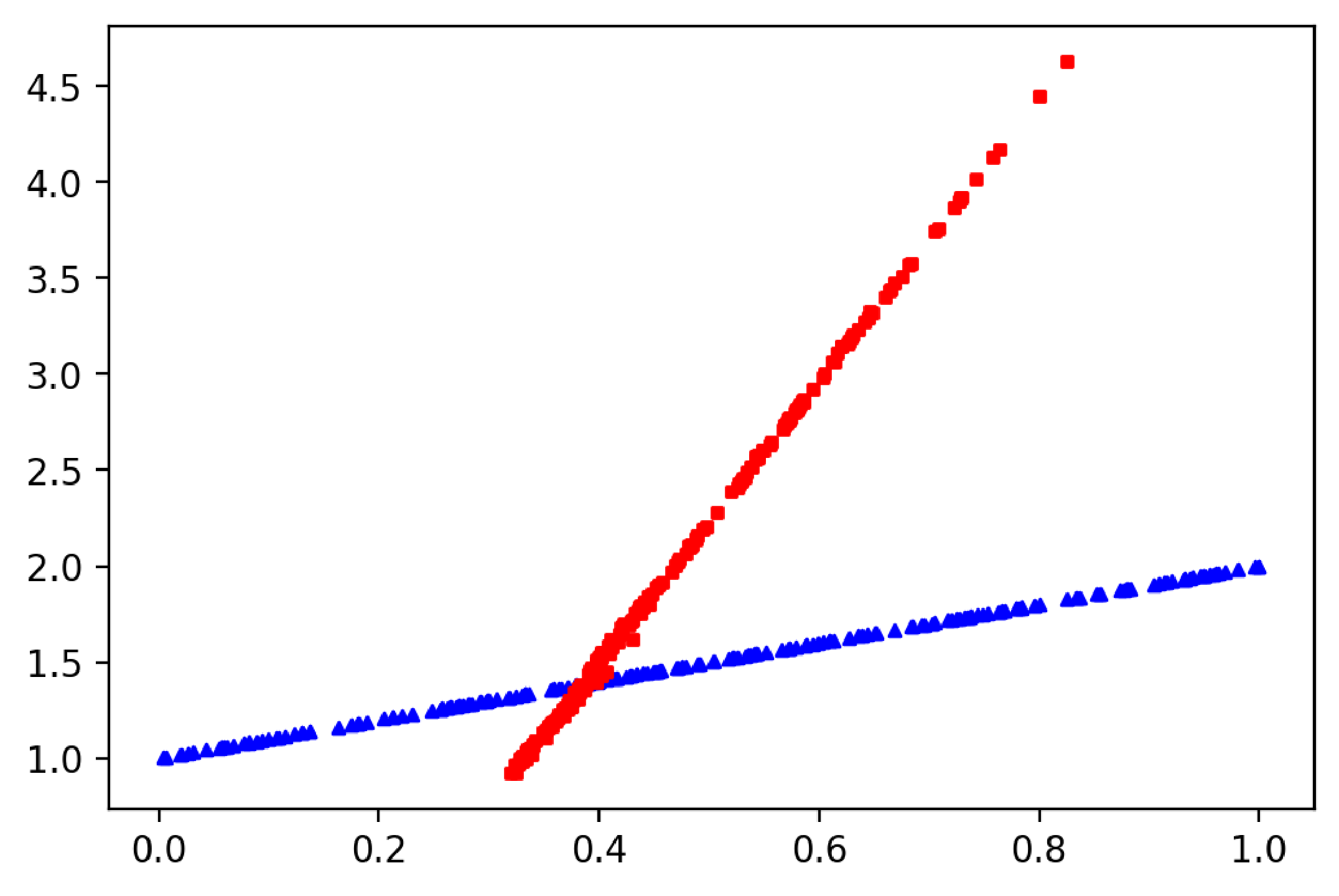}
	\caption*{$ W_1= $ 0.5962}
\end{subfigure}
\begin{subfigure}[b]{0.24\textwidth}
	\centering
	\includegraphics[width=\textwidth]{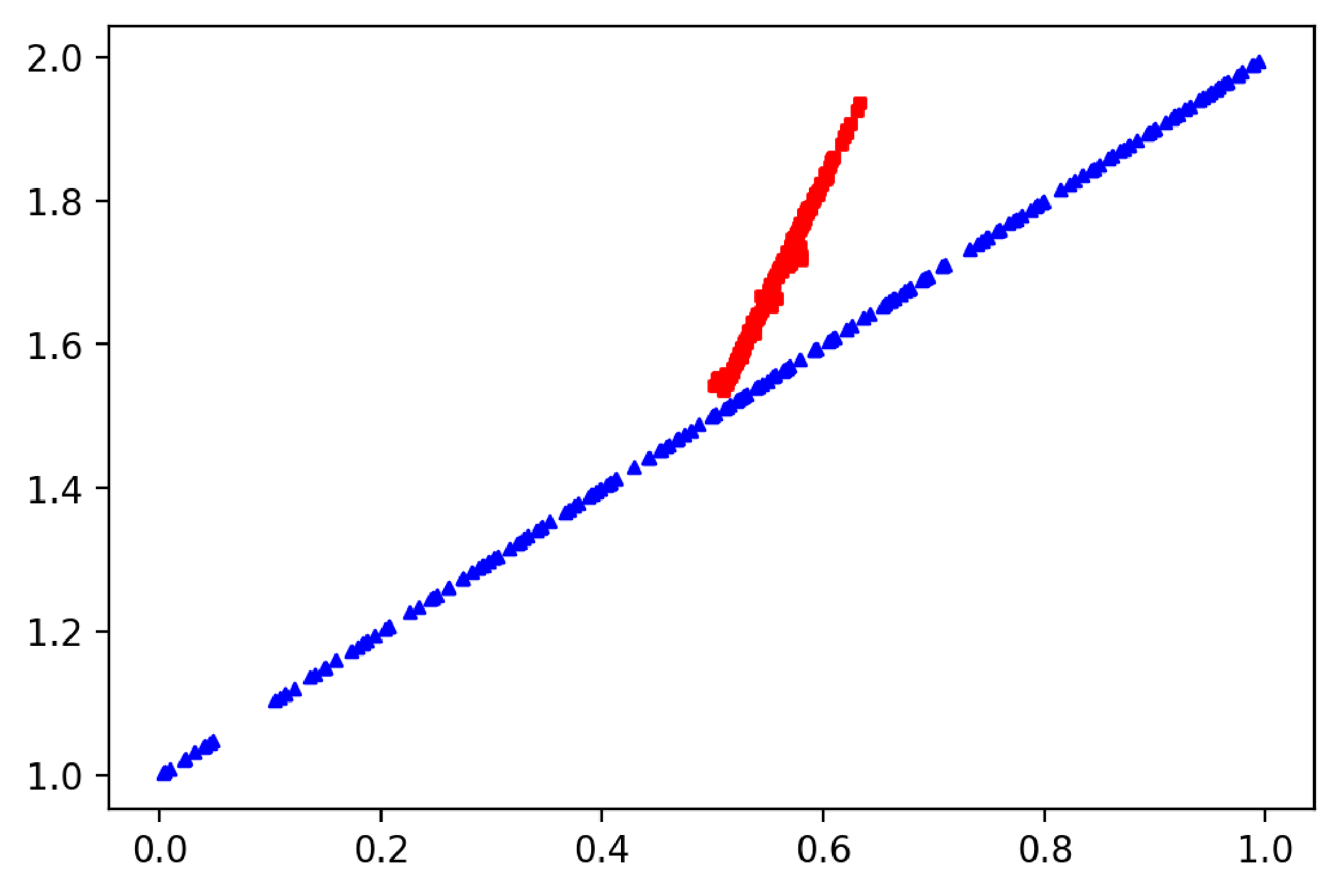}
	\caption*{ $ W_1= $ 0.2949}
\end{subfigure}
	\caption{Blue triangles: 200 data points sampled from the reference distribution (uncontaminated observations). Red squares: 200 data points generated from  WGAN (1st row), RWGAN-1 (2nd row), RWGAN-2  (3rd row), RWGAN-B(4th row), RWGAN-N with $\epsilon = 0.07$  (5th row), RWGAN-N with $\epsilon = 0.25$ (6th row) and  RWGAN-D  (7th row). Empirical Wasserstein distance $ p=1 $ between triangles and squares is provided.  (1st column: $\varepsilon = 0.1$,$\eta= 2$; 2nd column: $\varepsilon = 0.1$,$\eta= 3$; 3rd column: $\varepsilon = 0.2$,$\eta= 2$; 4th column: $\varepsilon = 0.2$,$\eta= 3$).} 

\label{syndata}
\end{figure}

\noindent \textit{Fashion-MNIST real data example.} 
We illustrate the performance of  RWGAN-1 and RWGAN-2 through comparing them with the WGAN and RWGAN-B in analysing the Fashion-MNIST real dataset, which contains 70000  grayscale images  of apparels, including T-shirt, jeans, pullover, skirt, coat, etc.
Each image is of $28\times 28$ pixels, and each pixel gets value from 0 to 255, indicating the darkness or lightness. We
generate outlying images by taking the negative effect of images already available in the dataset---a negative image is a total inversion, in which light areas appear dark and vice versa; in the Fashion-MNIST dataset, for each pixel of a negative image, it takes 255 minus the value of the pixel corresponding to the normal picture.
\textcolor{black}{By varying the number of negative images we define different levels of contamination: we consider 0, 3000, and 6000 outlying images.}  
For the sake of visualization, in Figure~\ref{normal pic} and~\ref{pic as outlier} we show normal and negative images, respectively. 
Our goal is to obtain a generator which can produce images of the same style as Figure \ref{normal pic}, even if it is trained on sample containing normal images and outliers. A more detailed explanation on how images generation works via \texttt{Pytorch} is available in  Appendix \ref{app.rwgan}. 

In Figure~\ref{fig:real_data}, we display images generated by the WGAN, RWGAN-1, RWGAN-2 and RWGAN-B under the different levels of contamination---we select $\lambda$ making use of the data driven procedure described in Section \ref{Sec:selection}. We see that all of the GANs  generate similar images when there are no outliers in the reference sample (1st row).
When there are 3000 outliers in the reference sample (2nd row), the GANs generate some negative images. However, by a simple eyeball search on the plots, we notice that WGAN produces many obvious negative images,  while RWGAN-B,  RWGAN-1 and RWGAN-2 generate less outlying images than WGAN. Remarkably, RWGAN-1 produces the smaller number of negative images among the considered methods. Similar conclusions holds for the higher contamination level with 6000 outlying images.

\begin{figure}[!http]
\centering
\begin{subfigure}[b]{0.3\textwidth}
\centering
\includegraphics[width=\textwidth]{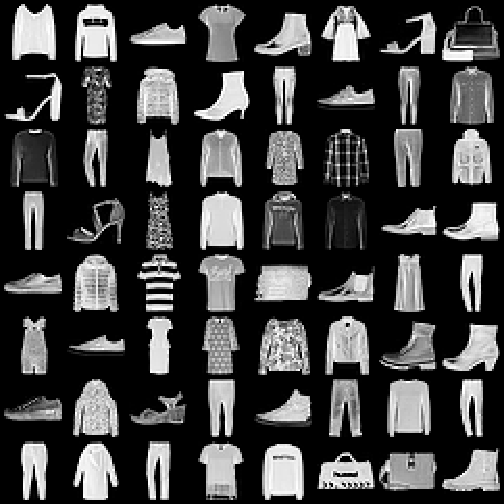}
\caption{ normal image }
\label{normal pic}
\end{subfigure}
\hspace{12mm}
\begin{subfigure}[b]{0.3\textwidth}
\centering
\includegraphics[width=\textwidth]{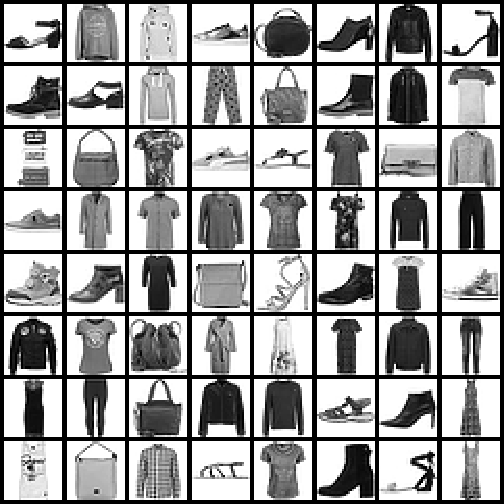}
\caption{outlying image}
\label{pic as outlier}
\end{subfigure}
\caption{Style of  clean reference sample images and  outlier images. Panel (a) contains pictures that are regarded as the right style. Panel (b)  contains  negative pictures (outliers).}

\end{figure}

To quantify the accuracy of these GANs, we train a convolutional neural network (CNN)  to classify normal images and negative images, using a training set of 60000 normal images and 60000 negative images. The resulting CNN  is 100\% accurate on a test set of size 20000. Then, we use this CNN to classify 1000 images generated by the four GANs. In Table~\ref{tab:resultofGAN}, we report the frequencies of normal images detected by the CNN.  The results are consistent with the eyeball  analysis of Figure~\ref{fig:real_data}: the  RWGAN-1 produces the smaller numbers of negative images at different contamination levels. 





\begin{figure}[http]
\centering
\begin{subfigure}[b]{0.2\textwidth}
\centering
\includegraphics[width=\textwidth]{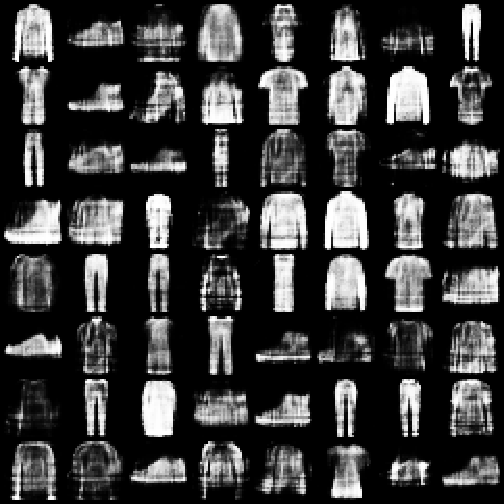}
\caption{}
\label{fig:wgan_0}
\end{subfigure}
\begin{subfigure}[b]{0.2\textwidth}
	\centering
	\includegraphics[width=\textwidth]{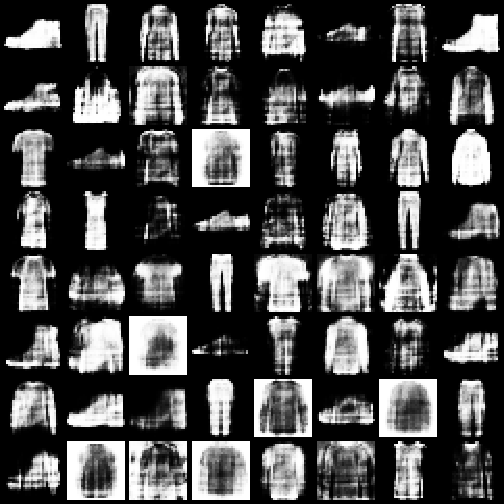}
	\caption{}
	\label{fig:wgan_5000}
\end{subfigure}
\begin{subfigure}[b]{0.2\textwidth}
	\centering
	\includegraphics[width=\textwidth]{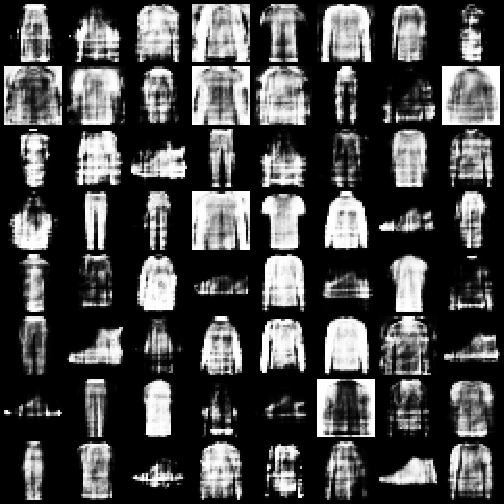}
	\caption{}
	\label{fig:wgan_10000}
\end{subfigure}
\\
\begin{subfigure}[b]{0.2\textwidth}
\centering
\includegraphics[width=\textwidth]{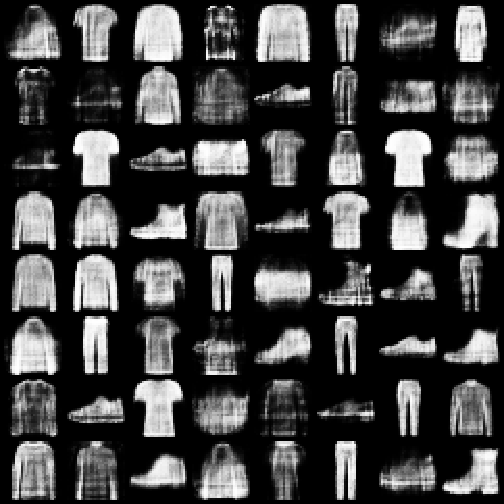}
\caption{}
\label{fig:rwgan1_0}
\end{subfigure}
\begin{subfigure}[b]{0.2\textwidth}
	\centering
	\includegraphics[width=\textwidth]{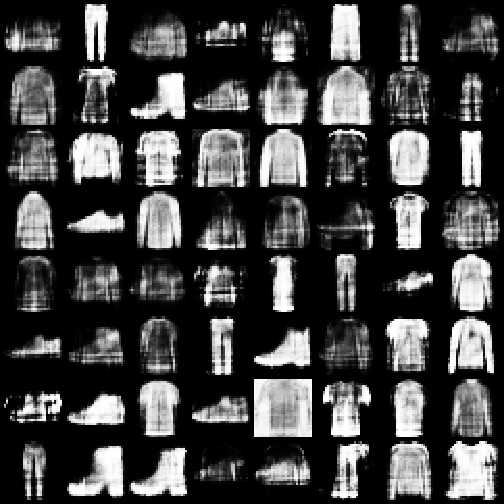}
	\caption{}
	\label{fig:rwgan1_5000}
\end{subfigure}
\begin{subfigure}[b]{0.2\textwidth}
	\centering
	\includegraphics[width=\textwidth]{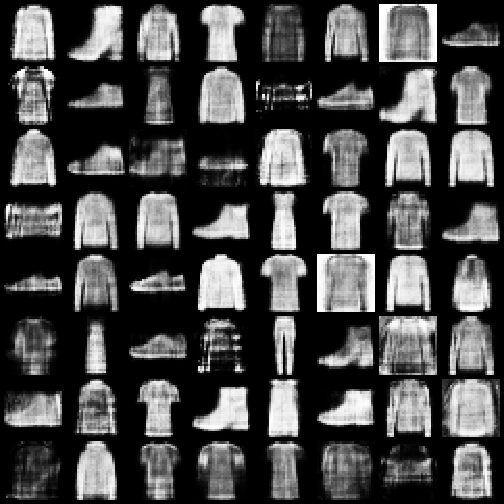}
	\caption{}
	\label{fig:rwgan1_10000}
\end{subfigure}
\\
\begin{subfigure}[b]{0.2\textwidth}
\centering
\includegraphics[width=\textwidth]{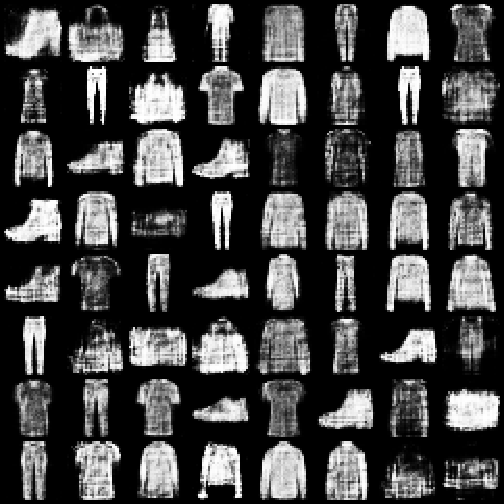}
\caption{}
\label{fig:rwgan2_0}
\end{subfigure}
\begin{subfigure}[b]{0.2\textwidth}
	\centering
	\includegraphics[width=\textwidth]{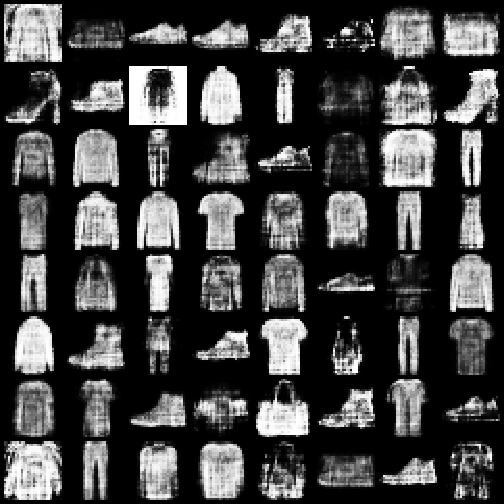}
	\caption{}
	\label{fig:rwgan2_5000}
\end{subfigure}
\begin{subfigure}[b]{0.2\textwidth}
	\centering
	\includegraphics[width=\textwidth]{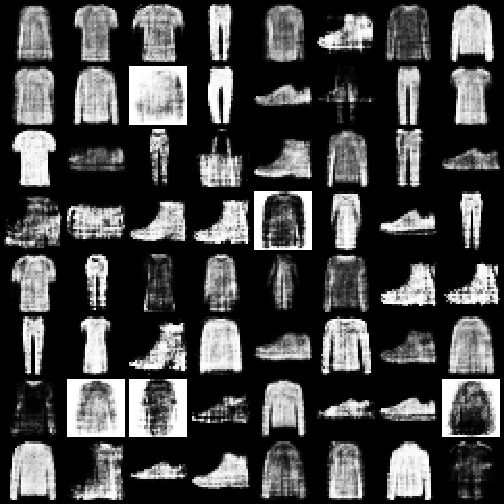}
	\caption{}
	\label{fig:rwgan2_10000}
\end{subfigure}
\\
\begin{subfigure}[b]{0.2\textwidth}
\centering
\includegraphics[width=\textwidth]{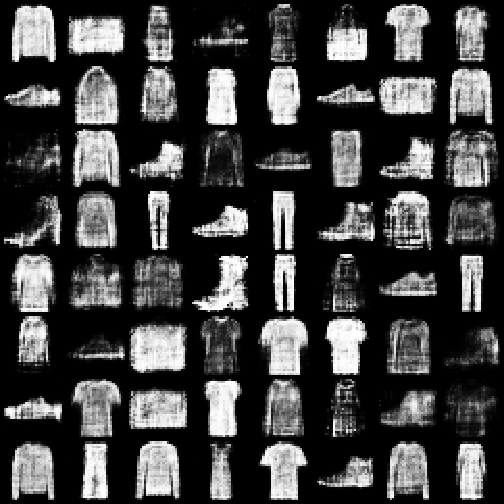}
\caption{}
\label{fig:rwganb_0}
\end{subfigure} 
\begin{subfigure}[b]{0.2\textwidth}
\centering
\includegraphics[width=\textwidth]{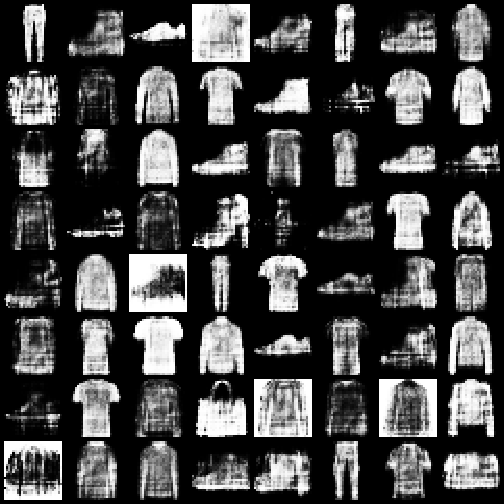}
\caption{}
\label{fig:rwganb_5000}
\end{subfigure} 
\begin{subfigure}[b]{0.2\textwidth}
\centering
\includegraphics[width=\textwidth]{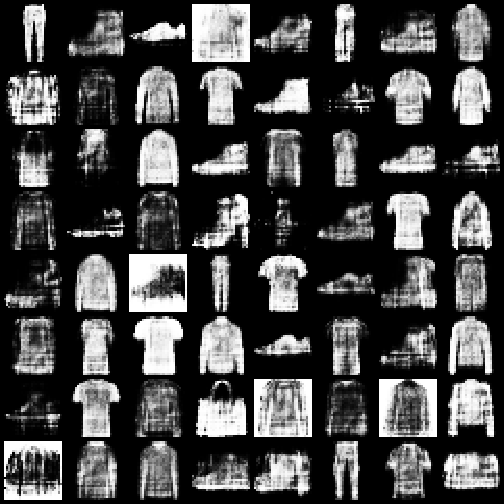}
\caption{}
\label{fig:rwganb_10000}
\end{subfigure}
\\
\caption{64 images generated from  WGAN (1st row), RWGAN-1 (2nd row), RWGAN-2  (3rd row), RWGAN-B  (4th row). 
(1st column: no outliers; 2nd column: 3000 outliers; 3rd column: 6000 outliers).}
\label{fig:real_data}
\end{figure}

\begin{table}[!htbp]
\centering
\begin{tabular}{|c|c|c|c|c|}
\hline
& \multicolumn{1}{c|}{WGAN} & \multicolumn{1}{l|}{RWGAN-1} & \multicolumn{1}{c|}{RWGAN-2} & \multicolumn{1}{l|}{RWGAN-B} \\
\hline
0 outliers & 1.000     & 1.000     & 1.000     & 1.000 \\
\hline
3000 outliers & 0.941 & 0.982 & 0.953 & 0.945\\
\hline
6000 outliers & 0.890  & 0.975 & 0.913 & 0.897 \\
\hline
\end{tabular}
\caption{Frequency of normal images output for each GAN model, under different contamination levels (the results are based on the classification by a trained CNN model).}
\label{tab:resultofGAN}%
\end{table}%

\subsection{Domain adaptation} \label{Sec.DA}

\textbf{Methodology.} We consider an application of ROBOT to  domain adaptation (DA), which is a popular machine learning problem 
and it deals with the inconsistent probability distributions of training samples and test samples. It has a wide range of applications in natural language processing (\cite{ramponi2020neural}), text analysis (\cite{daume2009frustratingly}) and medical data analysis (\cite{hu2020challenges}). 

The DA problem can be stated as follows. Let $$\{({X}_i^{\rm s}, Y_i^{\rm s}); 1\leq i \leq N_{\rm s}\}$$ denote an  i.i.d. sequence of {\it source domain (training sample) }  and $$\{({X}_j^{\rm t}, Y_j^{\rm t}); 1\leq j \leq N_{\rm t}\}$$ denote an  i.i.d. sequence of {\it target domain (test sample)}, which  have joint distributions ${\rm P_s}$ and ${\rm P_t}$, respectively. Interestingly, ${\rm P_s}$ and ${\rm P_t}$ may be two different distributions: discrepancies (also known as drift) in these data distributions depend on the type of application considered---e.g. in image analysis, domain drifts can be due to changing lighting conditions, recording device failures, or to changes in the backgrounds.

In the context of DA, we are interested in estimating the labels $Y_j^{\rm t}, j = 1, \ldots, N_{\rm t},$ from observations $$\{({X}_i^{\rm s}, Y_i^{\rm s}); 1\leq i \leq N_{\rm s}\}$$ and $\{{X}_j^{\rm t}; 1\leq j \leq N_{\rm t}\}$. 
Therefore,  the goal is to develop a learning method which is able to transfer knowledge from the source domain to the target domain, taking into account that the training and test samples may have different distributions. 

To achieve this inference goal, \cite{courty2017joint} assume that there is a  transformation $\mathcal{T}$ between  ${\rm P}_{\rm s}$ and ${\rm P}_{\rm t}$, and estimate the map  $\mathcal{T}$ via OT.
To elaborate further, let us consider that,
in practice, the population distribution of the source sample is  unknown and it is usually estimated by its empirical counterpart  
$$\widehat{\rm P}_{\rm s}^{(N_{\rm s})} = {N_{\rm s}}^{-1} \sum_{i=1}^{N_{\rm s}} \delta_{{X}_{i}^{\rm s}, Y_{i}^{\rm s}}.$$
As far as the target domain is concerned, suppose we can obtain a function $f$ to predict the unobserved response variable $Y^{\rm t}$. Thus, the empirical distribution of the target sample is  $$ \widehat{{\rm P}}_{{\rm t}; f}^{(N_{\rm t})}= N_{\rm t}^{-1} \sum_{j=1}^{N_{\rm t}} \delta_{{{X}}_{j}^{\rm t}, f({{X}}_{j}^{\rm t})}.$$ The central idea of Curtey et al. (2014) relies on transforming the data  to make the source and target distributions similar (namely, close in some distance), and use the label information available in the source domain to learn a classifier in the transformed domain, which in turn can be exploited to get the labels in the target domain. 

More formally,  the DA problem is written as an optimization problem which aims at finding the function $f$ that minimizes the  transport cost between the estimated source distribution $\widehat{\rm P}_{\rm s}^{(N_{\rm s})}$ and the estimated joint target distribution $\widehat{{\rm P}}_{{\rm t}; f}^{(N_{\rm t})}$.  
To set up this problem, \cite{courty2014domain}
propose to use  the cost function 
$$
{D}\left({x}_{1}, y_{1} ; {x}_{2}, y_{2}\right)=\alpha {d}\left({x}_{1}, {x}_{2}\right)+{L}\left(y_{1}, y_{2}\right),
$$ 
which defines a joint cost combining both the distances between the features $d\left({x}_{1}, {x}_{2}\right)$ and
  a  measure ${L}$ of the discrepancy between $ y_1 $ and $ y_2  $, where (as recommended in the Courty et al. code) 
  $\alpha=0.3$.
Then,
the optimization problem formulated in terms of 1-Wasserstein distance is 
\begin{equation}\label{equ10}
	\min _{f} W_1\left(\hat{\mathrm{P}}_{\rm s}^{(N_{\rm s})}, \hat{\mathrm{P}}_{{\rm t};f}^{(N_{\rm t})}\right) \equiv \min _{f,\mathbf{\Pi}} \sum_{i j}  {D}\left({X}_{i}^{\rm s}, Y_{i}^{\rm s} ; {X}_{j}^{\rm t}, f({X}_{j}^{\rm t})\right) \mathbf{\Pi}_{i j}^{(N_{\rm s},N_{\rm t})},
\end{equation}
which,  in the case of  squared loss function $L$ (see \cite{courty2014domain}), becomes
$$	
\min _{f}  N_{\rm t}^{-1} \sum_{j=1}^{N_{\rm t}} \left\|\hat{Y}_j-f(X_j^{\rm t})\right\|^2, \ \text{where} \ 
\hat{Y}_j =N_{\rm t} \sum_{i=1}^{N_{\rm s}} \mathbf{\Pi}_{i j}^{(N_{\rm s},N_{\rm t})} Y_i^{\rm s};
$$
Courty et al. 2014 propose to 
solve this problem
using (a regularized version of)  OT. Since OT can be sensitive to outliers, in the same spirit of \cite{courty2014domain}  but with the additional aim of reducing the impact of outlying values, we propose to use ROBOT instead of OT
in the DA problem.  The basic intuition is that ROBOT yields a matrix $ \mathbf{\Pi}^{(N_{\rm s},N_{\rm t})}$ which is able (by construction) to identify and downweight the impact of anomalous values, yielding a robust $\hat{Y}_j $, for every $j$. We provide the key mathematical and implementation details in Algorithm~\ref{alg:domain adptation}.

\begin{algorithm}[H]
	\SetAlgoLined
	\KwData{source sample $\{({X}_i^{\rm s},Y_i^{\rm s}),1\leq i \leq N_{\rm s} \}$,  target features $\{{X}_j^{\rm t}, 1\leq j \leq N_{\rm t}\}$,  robust regularization parameter $\lambda$, parametric model $ f_{\theta} $ (index by a parameter $\theta$)}
	\KwResult{Model $f_{\theta}$ }
	estimate $ \theta $ using the source sample\;
	\While{${\theta} $ does not  convergence}{
		let $ C^{(\lambda, N_{\rm s}, N_{\rm t})}_{ij} = \min\left\lbrace {D}\left({X}_{i}^{\rm s}, Y_{i}^{\rm s} ; {X}_{j}^{\rm t}, f_{\theta} \left({X}_{j}^{\rm t}\right)\right) ,2\lambda \right\rbrace $\;
		
		based on the cost matrix $(C^{(\lambda, N_{\rm s}, N_{\rm t})}_{ij})_{1\leq i\leq N_{\rm s}, 1\leq j\leq N_{\rm t}}$, 
		compute the  optimal transport plan matrix $ \mathbf{\Pi}^{(N_{\rm s},N_{\rm t})} $, and collect the set of all indices
		$$ \mathcal{I}=\{(i,j):  {D}\left({X}_{i}^{\rm s}, Y_{i}^{\rm s} ; {X}_{j}^{\rm t}, f_{\theta}\left({X}_{j}^{\rm t}\right)\right) \geq 2 \lambda, 1 \leq i \leq N_{\rm s}, 1 \leq j \leq N_{\rm t}\}$$\;
		
		set $s(i)=-\sum_{j=1}^{N_{\rm t}} \mathbf{\Pi}^{(N_{\rm s},N_{\rm t})}_{ij} \mathbbm{1}_{(i, j) \in \mathcal{I}}$, and find $\mathcal{H}$, the set of all the indices where $s(i)+1/N_{\rm s} = 0$\;
		
		remove $\ell$-th row  of $ \mathbf{\Pi}^{(N_{\rm s},N_{\rm t})} $ for all $\ell \in \mathcal{H}$ and normalize the matrix over columns to form a new matrix $ \mathbf{\Pi}^{(N^{\prime}_{\rm s},N_{\rm t})} $\;
		
		let ${L}_{\theta} =  N_{\rm t}^{-1} \sum_{j=1}^{N_{\rm t}} \left\|\hat{Y}_j-f_{\theta}(X_j^{\rm t})\right\|^2$, 
		where $\hat{Y}_j =N_{\rm t} \sum_{i=1}^{N^{\prime}_{\rm s}} \mathbf{\Pi}_{i j}^{(N^{\prime}_{\rm s},N_{\rm t})} Y_i^{\rm s}$\;	
		
		update $\theta$, which minimizes  $ {L}_{\theta} $\;
	}
		\caption{ROBUST DOMAIN ADAPTATION}
		\label{alg:domain adptation}
\end{algorithm}

\textbf{Numerical exercise.} The next Monte Carlo experiment illustrates the use and the performance of Algorithm  \ref{alg:domain adptation} on synthetic data.
We consider  $N_{\rm s}$ source data points (training sample) generated from 
\begin{equation}\label{source}
	\begin{array}{cc}
		& X_{i}^{(N_{\rm s})} \sim \mathcal{N}(-2,1),i=1,2,\ldots,[N_{\rm s}/2], \\
		& X_{i}^{(N_{\rm s})} \sim \mathcal{N}(2,1),i=[N_{\rm s}/2]+1,[N_{\rm s}/2]+2,\ldots,N_{\rm s},\\
		& Y_i^{(N_{\rm s})} = \sin (X_{i}^{(N_{\rm s})}/2) + Z_{i}^{(N_{\rm s})},i=1,2,\ldots,[9N_{\rm s}/10], \\
		& Y_i^{(N_{\rm s})} = \sin (X_{i}^{(N_{\rm s})}/2)+2 + Z_{i}^{(N_{\rm s})},i=[9n/10]+1,[9N_{\rm s}/10]+2,\ldots,N_{\rm s}, \\
		& Z_{i}^{(N_{\rm s})} \sim \mathcal{N}(0,0.1^2)\\
	\end{array}
\end{equation}
and the $ N_{\rm t} $ target points (test sample) are generated from 
\begin{equation}\label{target}
	\begin{array}{cc}
		& X_{j}^{(N_{\rm t})} \sim \mathcal{N}(-1,1),j=1,2,\ldots,[N_{\rm t}/2], \\
		& X_{j}^{(N_{\rm t})} \sim \mathcal{N}(2,1),j=[N_{\rm t}/2]+1,[N_{\rm t}/2]+2,\ldots,N_{\rm t},\\
		& Y_j^{(N_{\rm t})} = \sin ((X_{i}^{(N_{\rm t})}-2)/2) + Z_{i}^{(N_{\rm t})},j=1,2,\ldots,N_{\rm t},\\
		& Z_{j}^{(N_{\rm t})} \sim \mathcal{N}(0,0.1^2).\\
	\end{array}
\end{equation}
We notice that the source and target domain have a similar distribution but there is a shift, called \textit{drift} in the DA literature, between them.
In the training sample, outliers in the response variable are due to the drift added to the $Y_i^{(N_{\rm s})}$ for $i=[9n/10]+1,[9N_{\rm s}/10]+2,\ldots,N_{\rm s}$. Moreover, mimicking the problem of leverage points in linear regression (see \cite[Ch. 6]{hampel1986robust}), our simulation design is such that  half of the $X_{i}^{(N_{\rm s})}$s are generated from  a $\mathcal{N}(-2,1)$, whilst the other half is obtained from a shifted/drifted Gaussian $\mathcal{N}(2,1)$; the feature space in the test sample contains a similar pattern but with a different drift. 
\color{black}

 In our simulation exercise, we set $ N_{\rm s} =200 $ and $ N_{\rm t}=200 $ and we compare three methods for estimating $ f $: the first one is the Kernel Ridge Regression (KRR); the second one is the method proposed by \cite{courty2017joint}, which   is a combination of OT and  KRR; the third one is  our method, which is based on a modification of the second method, as obtained using ROBOT instead of OT. {To the best of our knowledge, this application of ROBOT to DA  is new to the literature}.



In Figure~\ref{fig:DA} we show the results. {The outliers in the training sample are clearly visible as a cluster of points which are far a part from the bulk of the data. The plot illustrates that the curve obtained via KRR fits reasonably well only the majority of the data in the source domain, but it looks 
inadequate for the target domain.}
This is due to the drift  between the target and source distributions: by design, the KRR approach cannot capture this drift since it takes into account only the source domain information. The method in \cite{courty2017joint} yields a curve that is closer to the target data than the KRR curve, but because of the outliers in the $Y_i^{(N_{\rm s})}$, it does not perform well when the feature takes on  negative values or values larger than four. Differently from the other methods, our method yields an estimated curve that is not too influenced by outlying values and that fits the target distribution better than the other methods.

\begin{figure}[http]
	\centering
	\includegraphics[scale=0.15]{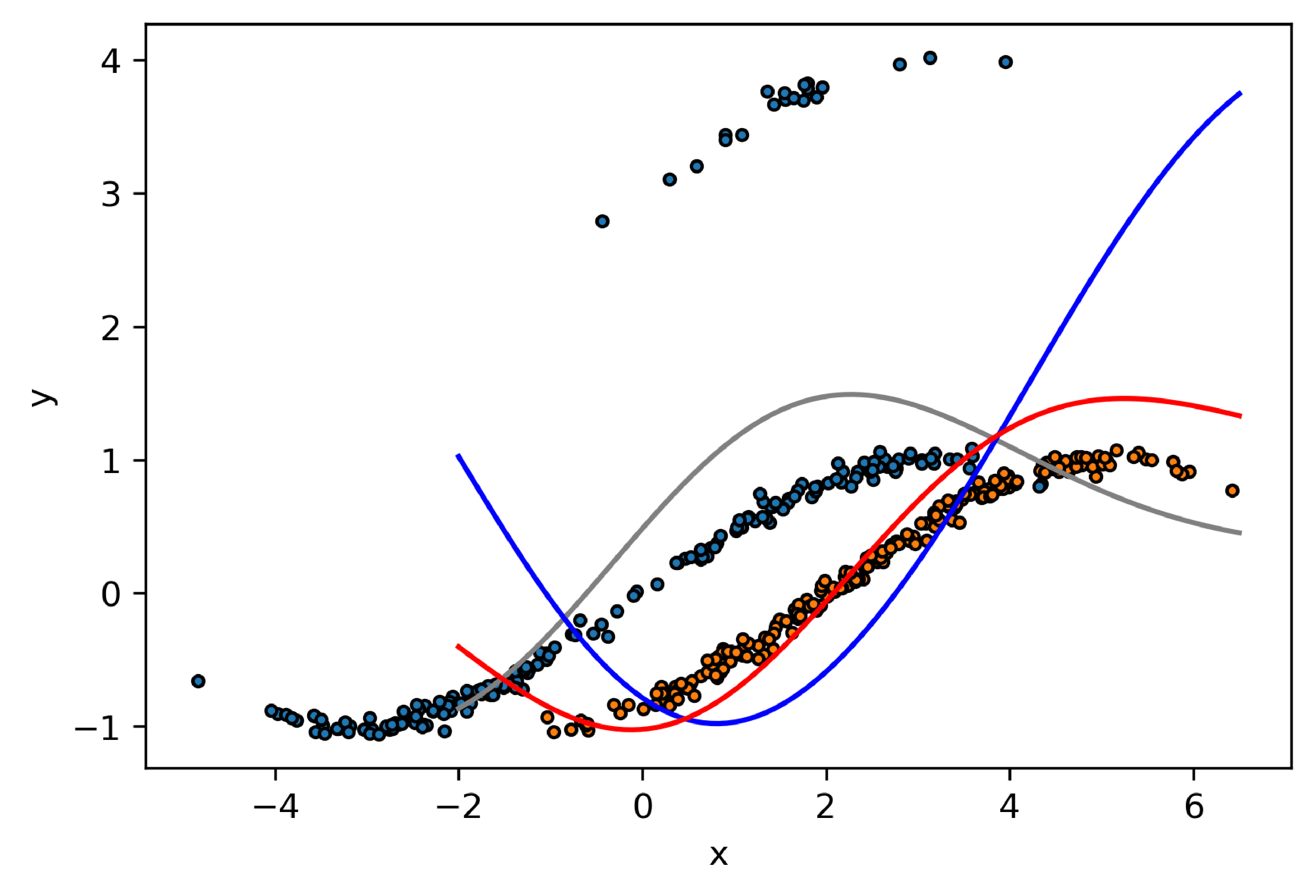}
	\caption{Scatter plots of the source and target samples, and functions estimated by different approaches.
		The grey-blue points represent the source samples, while the orange points represent the target samples. The gray, blue and red solid curves are obtained
		 by KRR, the method of \cite{courty2017joint} and our method, respectively. 		
	}
	\label{fig:DA}
\end{figure}

\section{Choice of $ \lambda$ via concentration inequalities}\label{Sec:selection}

The application of $W^{(\lambda)}$ requires the specification of $\lambda$.  In the literature on robust statistics, many selection criteria are based on first-order asymptotic theory. For instance, \cite{mancini2005optimal} and \cite{vecchia2012high} propose to select the tuning constant for Huber-type M-estimators achieving a given asymptotic efficiency at the reference model (namely, without considering the contamination). Unfortunately, these type of criteria hinge on the asymptotic  notion of influence function, which is not available for our estimators defined in Section~\ref{Robust Wasserstein estimator}; see  \cite{bassetti2006asymptotic} for a related discussion on MKE. \cite{mukherjee2021outlier} propose a criterion which needs  access to a clean data set and do not develop its theoretical underpinning. To fill this gap in the literature,
we introduce a selection criterion of $\lambda$ that does not have an asymptotic nature and is data-driven. Moreover, it does not need the computation of $\hat\theta^\lambda_n$ for each  $\lambda$, that makes its implementation fast.

To start with, let us notice that the term 
$W^{(\lambda)}(\mu_{\hat{\theta}^\lambda_n},\mu_\star)$ expresses
the robust Wasserstein distance between the fitted model ($\mu_{\hat{\theta}^\lambda_n}$) and the actual data generating measure ($\mu_\star$).   Intuitively, outlying values induce bias in the estimates, entailing abrupt changes in the distribution of $\hat{\theta}^\lambda_n$. This, in turn, entails changes in $W^{(\lambda)}(\mu_{\hat{\theta}^\lambda_n},\mu_\star)$. Therefore, the stability of the estimates and the stability of $W^{(\lambda)}(\mu_{\hat{\theta}^\lambda_n},\mu_\star)$ are tied together. 

This consideration is the stepping stone of our selection criterion for $\lambda$. 
To elaborate further, 
repeated applications of the triangle inequality yield
\begin{small}
\begin{align}
W^{(\lambda)}(\mu_{\hat{\theta}^\lambda_n},\mu_\star)&\leq W^{(\lambda)}(\mu_{\hat{\theta}^\lambda_n},\hat\mu_n)+W^{(\lambda)}(\hat\mu_n,\mu_\star) \leq W^{(\lambda)}(\mu_{\theta^*},\mu_\star)+2W^{(\lambda)}(\hat\mu_n,\mu_\star)  \nonumber \\
&\leq W^{(\lambda)}(\mu_{\theta^*},\mu_\star) +  2{\rm E}[ W^{(\lambda)}(\hat\mu_n,\mu_\star)] +2\vert W^{(\lambda)}(\hat\mu_n,\mu_\star) -{\rm E}[W^{(\lambda)}(\hat\mu_n,\mu_\star)]\vert,  \label{Eq. TrIneq}
\end{align}
\end{small}
which implies that the  
term $ W^{(\lambda)}(\hat\mu_n,\mu_\star)-{\rm E}[W^{(\lambda)}(\hat\mu_n,\mu_\star)$
 has an influence on the behavior of $W^{(\lambda)}(\mu_{\hat{\theta}^\lambda_n},\mu_\star)$.
Therefore,  
we argue that  
to reduce the impact of outliers in the distribution of $ W^{(\lambda)}(\hat\mu_n,\mu_\star)$ (hence, on the distribution of $\hat{\theta}^\lambda_n$), we need to control the concentration of $W^{(\lambda)}(\hat\mu_n,\mu)$ about its mean. To study this quantity, we make use of~\eqref{IneqI} and~\eqref{IneqO} and we define  a data-driven and non-asymptotic (i.e. depending on the finite $n$, on the estimated $\sigma$ and on the guessed  $\vert O \vert$)
selection criterion for $\lambda$. 

We refer to Appendix~\ref{App.lambda} for the mathematical detail, here, we simply state the basic intuition. Our idea hinges on the fact that  $\lambda$ controls the impact that the outliers have on the concentration bounds of $ W^{(\lambda)}(\hat\mu_n,\mu_\star)$. Thus, one may think of selecting the value(s) of $\lambda$ ensuring a desired level of stability in the distribution of $ W^{(\lambda)}(\hat\mu_n,\mu_\star)$. Operationally, we encode this stability in the ratio between the quantiles of $ W^{(\lambda)}(\hat\mu_n,\mu_\star)$ in the presence and in the absence of
contamination (see the quantity $\mathcal{Q}$, as available in Appendix~\ref{App.lambda}). Then, we measure the impact that different values of $\lambda$ have on this ratio
 computing its derivative w.r.t. $\lambda$ (see  the quantity ${\partial \mathcal{Q}}/{\partial \lambda}$, as available in Appendix~\ref{App.lambda}). Finally, we specify a tolerance  for the changes in this derivative and we identify the value(s) of $\lambda$ yielding the desired level of stability.  
 
 The described procedure allows to identify a (range of values of) $\lambda$,  not too close to zero and not too large either. This is a suitable and sensible achievement. Indeed, on the one hand, values of $\lambda$ too close to zero imply that $W^{(\lambda)}$ trims too many observations which likely include inliers and most outliers. On the other hand, too large values of $\lambda$  imply that   $W^{(\lambda)}$ trims a very limited number of outliers; see Appendix \ref{Sec.choice} for a numerical illustration of this aspect in terms of MSE as a function of $\lambda$.  Additional methodological aspects of the selection procedure and its numerical illustration are deferred to Appendix~\ref{Sec.choice} and Appendix~\ref{App.lambda}. 
\color{black} 






\appendix

\section*{Appendix}
This Appendix 
complements the results available in the main text. Specifically, Appendix~\ref{App.OT} provides some key theoretical results of optimal transport---the reader familiar with optimal transport theory may skip Appendix~\ref{App.OT}.
All proofs  are collected in Appendix~\ref{App:proof}. Appendix~\ref{App.MCresults}, as a complement to \S\ref{Sec:Applications}, provides more details and additional  numerical results of a few applications of ROBOT---see  Appendix~\ref{App.ROBOT} for an application of ROBOT to detect outliers in the context of parametric estimation in a linear regression model. In Appendix~\ref{app.rwgan}, we provide more details about implementation of RWGAN. 
Appendix~\ref{Sec: gen_choice} contains the methodological aspects of $\lambda$ selection.  All our numerical results can be reproduced using the (\texttt{Python} and \texttt{R}) code available upon request to the corresponding author.
%

\subsection{Some key theoretical details about optimal transport}\label{App.OT}

This section is a supplementary to Section~\ref{Optimal transport}. For deep details about optimal transport, we refer the reader to~Villani (2003, 2009).

Measure transportation theory dates back to the celebrated work  \cite{monge1781memoire}, in which Monge formulated a mathematical problem that in modern language can be expressed as follows. Given two probability measures $\mu \in \mathcal{P(X)}$, $\nu \in \mathcal{P(Y)}$ and a cost function $c:\mathcal{X \times Y}\rightarrow [0,+\infty] $, Monge's problem is to solve the minimization equation
\begin{equation} \label{equb1}
	\inf \left\lbrace  {\int_{\mathcal{X}} c(x,\mathcal{T}(x)) d \mu(x): \mathcal{T}\# \mu =\nu }  \right\rbrace 
\end{equation}
(in the measure transportation terminology, $\mathcal{T} \# \mu =\nu$ means that $\mathcal{T}$ is pushing $\mu$ foward to $\nu$). The solution to the Monge's problem (MP) \eqref{equb1} is called {\it optimal transport map}. Informally, one says that $\mathcal{T}$ transports the mass represented by the measure $\mu$ to the mass represented by the measure $\nu$. The optimal transport map which solves problem \eqref{equb1} (for a given $c$)  hence naturally yields  minimal cost of transportation.

It should be stressed that Monge’s formulation has two undesirable prospectives. First, for some measures, no solution to \eqref{equb1} exists. 
Considering, for instance, the case that $\mu$ is a Dirac measure while $\nu$ is not, there is no transport map which transport mass between $\mu$ and $\nu$. Second, since the set $\{\mu, {\cal T}(\nu)\}$ of all measurable transport map is non-convex, solving  problem~\eqref{equb1} is algorithmically challenging.

Monge’s problem was revisited by \cite{kantorovich1942translocation}, in which a more flexible and computationally feasible formulation was proposed. The intuition behind the the Kantorovich's problem is that mass can be disassembled and combined freely. Let $\Pi (\mu,\nu)$  denote the set of all joint probability measures of $\mu \in \mathcal{P(X)}$ and $\nu \in \mathcal{P(Y)}$. Kontorovich's problem aims at finding a joint distribution $\pi   \in \Pi (\mu,\nu)$ which minimizes the expectation of the coupling between $X$ and $Y$ in terms of the cost function $c$, and it can be formulated as
\begin{equation} \label{equb2}
	\inf \left\lbrace \int_{\mathcal{X\times Y}} c(x,y) d \pi(x,y) : \pi \in \Pi(\mu,\nu) \right\rbrace .
\end{equation}
A solution to Kantorovich’s problem (KP) \eqref{equb2} is called an {\it optimal transport plan}.  Note that Kantorovich’s problem is more general than Monge’s one, since it allows for mass splitting. Moreover, unlike $\{\mu, {\cal T}(\nu)\}$, the set $\Pi(\mu,\nu)$ is convex, and the solution to  \eqref{equb2} exists under some mild assumptions on $c$, e.g., lower semicontinuous (see \cite[Chapter 4]{villani2009optimal}). 
\cite{brenier1987decomposition} established the relationship between   optimal transport plan   and  optimal transport map when the cost function is the squared Euclidean distance. More specifically, the optimal transport plan $\pi$ can be expressed as $(\mathrm{Id},\mathcal{T})_{\# }\mu$.

The dual form (KD) of Kantorovich’s primal minimization problem \eqref{equb2} is to solve the maximization problem 
\begin{equation}\label{equb3}
	\begin{array}{ll}
		&\sup \left\lbrace \int_{\mathcal{Y}} \phi d\nu - \int_{\mathcal{X}} \psi  d\mu : \phi \in C_{b}(\mathcal{Y}) , \psi \in C_b(\mathcal{X})  \right\rbrace \\
		&{\rm s.t.}\quad \phi(y) - \psi(x) \leq c(x,y),\enspace \forall(x,y),
	\end{array}
\end{equation}
where $C_b(\mathcal{X})$ is the set of bounded continuous functions on $\mathcal{X}$. 
According to Theorem 5.10 in \cite{villani2009optimal}, if the cost function $c$ is a lower semicontinuous, there exists a solution to the dual problem such that the solutions to KD and KP coincide, namely there is no duality gap. In this case, the solution takes the form
\begin{equation}\label{equ4}
	\phi(y) =\inf \limits_{x \in \mathcal{X}} [\psi(x)+c(x, y)] \quad \text{and} \quad
	\psi(x)=\sup \limits_{y \in \mathcal{Y}}[\phi(y)-c(x, y)],
\end{equation}
where the functions $\phi$ and $\psi$ are called {\it $c$-concave} and {\it $c$-convex}, respectively, and $\phi$ (resp. $\psi$) is called the {\it $c$-transform} of $\psi$ (resp. $\phi$). A special case is that when $ c$ is a metric on $ \mathcal{X} $, equation~\eqref{equb3} simplifies to 
\begin{equation}\label{bKBD}
	\sup \left\lbrace \int_{\mathcal{Y}} \psi d\nu - \int_{\mathcal{X}} \psi  d\mu : \psi \, \text{is 1-Lipschitz continuous}    \right\rbrace ,
\end{equation}
This case is commonly known as  Kantorovich-Rubenstein duality.

Another important concept in measure transportation theory is {\it Wasserstein distance}. Let $(\mathcal{X},d)$ denote a complete metric space equipped with a metric $d:\mathcal{X} \times \mathcal{X} \rightarrow \mathbb{R}$, and let $\mu$ and $\nu$ be two probability measures on $\mathcal{X}$. 
Solving the optimal transport problem in \eqref{equb2}, with the cost function  $c(x,y) = d^p(x,y)$, would introduce a distance, called  Wasserstein distance, between $\mu$ and $\nu$. More specifically, the Wasserstein distance of order $p$ $(p \geq 1)$ is defined by 
\eqref{equ5}.

\subsection{Proofs}\label{App:proof}

\noindent \textbf{Proof of Theorem~\ref{thm3}.} First, if $\psi(y)-\psi(x) \leq c(x,y)$, then $\psi(y) -c(x,y) \leq \psi(x)$ and hence
$$ \sup \limits_{y \in \mathcal{X}}(\psi(y)-c(x, y)) \leq \psi(x) ,$$
where the supremum of the left-hand side is reached when $y=x$. Therefore, $\psi(x) $ is {\it c-convex} and $\psi(x)= \psi^{c}(x)$. Combined with Theorem 5.10 in \cite{villani2009optimal}, the proof can be completed. \qed \\


\noindent \textbf{Proof of Lemma~\ref{lemma1}.} 
We will prove that $c_{\lambda}$ satisfies the axioms of a distance. Let $x,y,z$ be three points in the space $ \mathcal{X} $. First,  the symmetry of $c_{\lambda}(x,y)$, that is, $c_{\lambda}(x,y) =c_{\lambda}(y,x) $, follows immediately from  
$$
\min\{c(x,y),2\lambda\}= \min\{c(y,x),2\lambda\}.
$$

Next, it is obvious that $c_{\lambda}(x,x) =0$. Assuming that $c_{\lambda}(x,y) =0$, then $$\min\{c(x,y),2\lambda\}=c(x,y)=0,$$ 
which entails that  $x=y$ since $c(x,y)$ is a metric.  By definition, we also have $ c_{\lambda}(x,y) \geq 0 $ for all $ x, y \in \mathcal{X}$.

Finally, note that $$c_{\lambda}(x,z)+c_{\lambda}(y,z) > 2\lambda \geq \min\{c(x,y),2\lambda \} = c_{\lambda}(x,y)$$
when $c(x,z)$ or $c(y,z) > 2 \lambda$, and $$c_{\lambda}(x,z)+c_{\lambda}(y,z) = c(x,z)+c(y,z) \geq c(x,y) \geq c_{\lambda}(x,y)$$ when $c(x,z)  \leq 2 \lambda$ and $c(y,z) \leq 2 \lambda$. Therefore, we conclude that $c_{\lambda}$ is a metric on $ \mathcal{X} $.  \qed

\vspace{4mm}

\noindent \textbf{Proof of Theorem~\ref{thm1}.} 
We prove that $ W^{(\lambda)} $ satisfies the axioms of a distance.  First, $W^{(\lambda)}(\mu,\nu) \geq 0$  is obvious. Conversely, letting $ \mu ,\nu $ be two probability measures such that $W^{(\lambda)}(\mu,\nu) = 0$, then there  exists at least  an  optimal transport plan $ \pi $ \cite{villani2009optimal}. It is clear that the support set of $ \pi$ is the diagonal $(y=x)$. Thus, for all continuous and bounded functions $\psi$,  
\[ 
\int \psi d\mu = \int \psi(x)d\pi(x,y)=\int \psi(y) d\pi(x,y) = \int \psi d\nu,
\]
which implies $ \mu=\nu $.

Next, let $\mu_{1}, \mu_{2}$ and $\mu_{3} $ be three probability measures on $\mathcal{X}$, and let $ \pi_{12} \in \Pi (\mu_1,\mu_2) $, $\pi_{23} \in \Pi (\mu_2,\mu_3)  $ be  optimal transport plans.  By the Gluing Lemma, there exist a probability measure $ \pi$ in $\mathcal{P}(\mathcal{X}\times \mathcal{X} \times \mathcal{X}) $ with marginals $ \pi_{12} $, $ \pi_{23} $  and $ \pi_{13} $ on $ \mathcal{X}\times \mathcal{X}$. Use the triangle inequality in Lemma~\ref{lemma1}, we obtain
\[ 
\begin{aligned}
	W^{(\lambda)}(\mu_1,\mu_3) &\leq \int_{\mathcal{X}\times \mathcal{X}} c_{\lambda}(x_1,x_3) d\pi_{13}(x_1,x_3) \\
	&=\int_{\mathcal{X}\times \mathcal{X} \times \mathcal{X}} c_{\lambda}(x_1,x_3) d\pi(x_1,x_2,x_3)\\
	&\leq \int_{\mathcal{X}\times \mathcal{X} \times \mathcal{X}} [c_{\lambda}(x_1,x_2) + c_{\lambda}(x_2,x_3)] d\pi(x_1,x_2,x_3)\\
	&=\int_{\mathcal{X}\times \mathcal{X}} c_{\lambda}(x_1,x_2) d\pi_{12}(x_1,x_2) +
	\int_{\mathcal{X}\times \mathcal{X}} c_{\lambda}(x_2,x_3) d\pi_{13}(x_2,x_3) \\ 
	&=W^{(\lambda)}(\mu_1,\mu_2) +W^{(\lambda)}(\mu_2,\mu_3)
\end{aligned}
\]

Moreover, the symmetry is obvious since 
$$W^{(\lambda)}(\mu,\nu) = \inf \limits_{\pi \in \Pi(\mu,\nu)} \left( \int c_{\lambda}(x,y) d \pi(x,y) \right)  = W^{(\lambda)}(\nu,\mu).$$

Finally, note that $ W^{(\lambda)} $ is finite since
\[ 
W^{(\lambda)}(\mu,\nu)= \inf \limits_{\pi \in \Pi(\mu,\nu)}  \int c_{\lambda}(x,y) d \pi(x,y)
\leq \inf \limits_{\pi \in \Pi(\mu,\nu)}  \int 2\lambda  d \pi(x,y)  \leq +\infty.
\]

In conclusion, $ W^{(\lambda)} $ is a finite distance on $\mathcal{P(X)}$. \qed
\vspace{4mm}

\noindent \textbf{Proof of Theorem~\ref{thm2}.} 	
First, we prove that $W^{(\lambda)} $ is monotonically increasing with respect to $\lambda$. Assuming that $\lambda_2 > \lambda_1  > 0$, then $c_{\lambda_2} \geq c_{\lambda_1}  $. Suppose that there is an optimal transport plan $\pi_{\lambda_{\ell}}, \ell = 1, 2$ such that $\int c_{\lambda_{\ell}} d\pi_{\lambda_{\ell}}$ reaches its minimum. Then $\pi_{\lambda_2}$ is not necessarily the optimal transport plan when the cost function is $c_{\lambda_1} $. We therefore have $\int c_{\lambda_2} d\pi_{\lambda_2} > \int c_{\lambda_1} d\pi_{\lambda_2}\geq \int c_{\lambda_1} d\pi_{\lambda_1}$.

Then we prove the continuity of $W^{(\lambda)} $.  Fixing $\varepsilon > 0 $ and letting  $2\lambda_2- 2\lambda_1 < \varepsilon $, we have
\[ 
\begin{aligned}
	\left|W^{(\lambda_2)}- W^{(\lambda_1)}\right| &= \left|\int c_{\lambda_2} d\pi_{\lambda_2}- \int c_{\lambda_1} d\pi_{\lambda_1}  \right|\\
	&\leq \left| \int c_{\lambda_2} d\pi_{\lambda_1}- \int c_{\lambda_1} d\pi_{\lambda_1}\right|\\
	&= \left|\int (c_{\lambda_2} - c_{\lambda_1}) d\pi_{\lambda_1} \right|\\
	&\leq 2\lambda_2 - 2\lambda_1 \leq \varepsilon. 
\end{aligned}
\]
Therefore, $ W^{(\lambda)} $ is continuous.   

Now we turn to the last part of Theorem~\ref{thm2}. Note that if $W_1$ exists, we have $ W^{(\lambda)} \leq W_1$ for any $ \lambda \in [0, \infty)$. Since $ W^{(\lambda)} $ is increasing with respect to $\lambda$, its limit $ W^* := \lim_{\lambda \rightarrow \infty} W^{(\lambda)}(\mu,\nu) $ exists. It remains to prove $ W^* = W_1 $.
Since $ W_1 <+\infty  $,  for any fixing $ \varepsilon >0 $, there exists a number $ R $ such that $ \int_{c > R } c d \pi < \varepsilon,$ where $ \pi $ is the  optimal transport plan corresponding to the cost function $ c $.  Let $2\lambda > R$, then $ W^{(\lambda)} > \int_{c \leq R } c d \pi  $ and $ \vert W^{(\lambda)} - W_1 \vert < \varepsilon$. Finally, since $ \varepsilon $ is arbitrarily small, we have $ W^* = W_1$.  \qed

\vspace{4mm}

In order to prove Theorem~\ref{thm4}, we need a lemma which shows that a 
Cauchy sequence in robust Wasserstein distance is tight. 

\begin{lemma}\label{lemma2}
	Let $\mathcal{X}$ be a Polish space and $(\mu_{k})_{k\in \mathbb{N}}$ be a Cauchy sequence in $(\mathcal{P(X)},W^{(\lambda)})$. Then $(\mu_k)_{k\in \mathbb{N}}$ is tight. 
\end{lemma}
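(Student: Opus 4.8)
The plan is to verify the standard total-boundedness sufficient condition for tightness on a Polish space: a family of Borel probability measures is tight provided that, for every $\varepsilon>0$ and every $\delta>0$, there is a finite union $U$ of open balls of radius $\delta$ with $\inf_k\mu_k(U)\ge 1-\varepsilon$. So I would fix $\varepsilon,\delta>0$ and construct such a $U$. First I would use the Cauchy property to pick an index $N$ with $W^{(\lambda)}(\mu_k,\mu_N)<\eta$ for all $k\ge N$, where $\eta>0$ is a small threshold to be fixed at the end. Because $\{\mu_1,\dots,\mu_N\}$ is a finite family of Borel probability measures on a Polish space, each member is tight (Ulam's theorem), so there is a compact set $K$ with $\mu_j(K^{c})<\varepsilon/2$ for every $j\le N$; covering $K$ by finitely many balls of radius $\delta/2$ yields an open set $V=\bigcup_{i}B(y_i,\delta/2)$ with $\mu_j(V^{c})<\varepsilon/2$ for all $j\le N$.

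The core step is to transfer this concentration from $\mu_N$ to the tail $\mu_k$, $k>N$, through an optimal coupling $\pi\in\Pi(\mu_k,\mu_N)$ (which exists since $c_{\lambda}$ is bounded and lower semicontinuous). Set $r=\min\{\delta/2,2\lambda\}$ and let $U=\{z:\,d(z,V)<r\}$, so that $U\subseteq\bigcup_i B(y_i,\delta)$ is again a finite union of $\delta$-balls. Splitting $\mu_k(U^{c})=\pi(U^{c}\times V)+\pi(U^{c}\times V^{c})$, the second term is at most $\mu_N(V^{c})<\varepsilon/2$. On the first region every pair $(x,y)$ satisfies $d(x,y)\ge d(x,V)\ge r$, hence $c_{\lambda}(x,y)\ge\min\{r,2\lambda\}=r$; therefore $r\,\pi(U^{c}\times V)\le\int c_{\lambda}\,d\pi=W^{(\lambda)}(\mu_k,\mu_N)<\eta$, giving $\pi(U^{c}\times V)<\eta/r$. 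Choosing $\eta=r\varepsilon/2$ makes $\mu_k(U^{c})<\varepsilon$ for all $k>N$; enlarging $U$ by the finitely many balls that cover $\mu_1,\dots,\mu_N$ handles the remaining indices, so the resulting finite union of $\delta$-balls works uniformly in $k$.

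The main obstacle, and the only place the argument departs from the classical $W_1$ proof, is that $c_{\lambda}$ is bounded by $2\lambda$: thus $W^{(\lambda)}$ gives no leverage on transport at scales beyond $2\lambda$ and cannot, by itself, prevent mass from escaping to infinity the way an unbounded cost does. The remedy is exactly the two-scale decomposition above. The fine-scale concentration (radius $\delta$, possibly $\ll\lambda$) is imported from the fixed, automatically tight reference measure $\mu_N$, while the coupling is only required to certify separation at the scale $r=\min\{\delta/2,2\lambda\}$, on which $c_{\lambda}$ still agrees with the genuine distance and is therefore bounded below by $r$. I would finally note that optimality of $\pi$ may be replaced by any $\eta$-optimal coupling with no change, and then invoke the cited characterization to conclude that $(\mu_k)_{k\in\mathbb{N}}$ is tight.
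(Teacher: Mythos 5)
Your proof is correct, and it reaches the conclusion by a genuinely different route in the key step. The paper, following the classical argument for $W_1$ (Villani), transfers concentration from the fixed reference measure $\mu_j$ to the tail measures $\mu_k$ through the \emph{dual} formulation: it builds the $(1/\varepsilon)$-Lipschitz test function $\phi=(1-d(x,U)/\varepsilon)_+$ sandwiched between indicators and bounds $\int\phi\,d\mu_k-\int\phi\,d\mu_j$ by $W^{(\lambda)}(\mu_k,\mu_j)/\varepsilon$ using Theorem~\ref{thm3}, which forces the side condition $2\lambda/\varepsilon\geq 1$ to make $\phi$ admissible; it then constructs the compact set $S_\varepsilon$ by hand as an intersection of finite ball covers over dyadic scales. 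You instead work entirely on the \emph{primal} side: a (near-)optimal coupling $\pi\in\Pi(\mu_k,\mu_N)$, the decomposition $\mu_k(U^c)=\pi(U^c\times V)+\pi(U^c\times V^c)$, and a Markov-type inequality $r\,\pi(U^c\times V)\leq\int c_\lambda\,d\pi$, with the boundedness of $c_\lambda$ absorbed cleanly into the choice $r=\min\{\delta/2,2\lambda\}$ so that $c_\lambda\geq r$ on the relevant region; you then delegate the final compactness construction to the standard total-boundedness criterion for tightness on a complete separable space (which is exactly what the paper proves inline). Your version buys three things: it needs no duality theorem, only the existence of a near-optimal coupling (available by definition of the infimum); it avoids the paper's awkward constraint tying $\lambda$ to $\varepsilon$; and by decoupling the mass level $\varepsilon$ from the radius $\delta$ it makes transparent where the fine-scale concentration comes from (the tight reference measure, not the bounded cost). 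The paper's dual argument, in exchange, reuses machinery (Theorem~\ref{thm3}) that it needs elsewhere anyway, notably in the proof of Theorem~\ref{thm4}.
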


\noindent \textbf{Proof of Lemma~\ref{lemma2}.}
Because $(\mu_{k})_{k\in \mathbb{N}} $ is a Cauchy sequence, we have
$$W^{(\lambda)}(\mu_{k},\mu_{l}) \rightarrow 0$$ as $ k,l \rightarrow \infty.$
Hence, for $ \varepsilon >0 $, there is a $ N $ such that  
\begin{equation}\label{A1}
	W^{(\lambda)}(\mu_{k},\mu_{l}) \leq \varepsilon \quad \text{for} \ \ k,l \geq N
\end{equation}
Then for any $k \in \mathbb{N} $, there is a $j$ such that $W^{(\lambda)}(\mu_{k},\mu_{j}) < \varepsilon^2 $ (if $k>N $, this is \eqref{A1}; if $k\leq N $, we just let $j=k $).

Since the finite set $\left\lbrace\mu_{1},\mu_{2},\dots , \mu_{N} \right\rbrace  $ is tight, there is a compact set $K $ such that $\mu_{j}[\mathcal{X} \backslash K_{\varepsilon} ] < \varepsilon $ for all $j \in \left\lbrace1, \dots , N \right\rbrace  $. By compactness, $K_{\varepsilon}$ can be covered by a finite number of small balls, that is, $K_{\varepsilon} \subset B(x_1,\varepsilon) \cup \dots \cup  B(x_m,\varepsilon)$ for a fixed integer $m$.

Now write 
\[
\begin{array}{cc}
	U_{\varepsilon}:= B(x_1,\varepsilon) \cup \dots \cup B(x_m,\varepsilon),  \\
	\tilde{U}_{\varepsilon} := \left\lbrace x \in \mathcal{X}: d(x,U) < \varepsilon \right\rbrace \subset B(x_1,2\varepsilon) \cup \dots \cup B(x_m,2\varepsilon),\\
	\phi(x):=\left(1- \dfrac{d(x,U)}{\varepsilon} \right)_{+}.
\end{array}
\]
Note that $\mathbbm{1}_{U_{\varepsilon}} \leq \phi \leq \mathbbm{1}_{\tilde{U}_{\varepsilon}}$ and $\phi$ is $(1 / \varepsilon)$-Lipschitz. By Theorem~\ref{thm3}, we find that for $j \leq N$, $\dfrac{2\lambda}{\varepsilon} \geq 1$ (this is reasonable because we need $ \varepsilon $ as small as possible) and $k$ arbitrary,
\[ 
\begin{aligned}
	\mu_{k}\left[\tilde{U}_{\varepsilon}\right] & \geq \int \phi d \mu_{k} \\
	&=\int \phi d \mu_{j}+\left(\int \phi d \mu_{k}-\int \phi d \mu_{j}\right) \\
	& \mathop{\geq}\limits_{(1)}  \int \phi d \mu_{j}-\frac{W^{(\lambda)}\left(\mu_{k}, \mu_{j}\right)}{\varepsilon} \\
	& \geq \mu_{j}[U_{\varepsilon}]-\frac{W^{(\lambda)}\left(\mu_{k}, \mu_{j}\right)}{\varepsilon}.
\end{aligned}
\]
Inequality $ (1) $ holds for
\[ 
\begin{aligned}
	\int \phi d \mu_{k}-\int \phi d \mu_{j}
	\leq& \sup_{\mathrm{range}(f)\leq 1 \atop |f(x_1)-f(x_2)|\leq \frac{1}{\epsilon}d(x_1,x_2) } \left\lbrace \int f d \mu_{k}-\int f d \mu_{j}   \right\rbrace \\
	\leq& \sup_{\mathrm{range}(f)\leq \frac{2\lambda}{\epsilon} \atop |f(x_1)-f(x_2)|\leq \frac{1}{\epsilon}d(x_1,x_2) } \left\lbrace \int f d \mu_{k}-\int f d \mu_{j}   \right\rbrace \\
	=& \sup_{\mathrm{range}(f)\leq 2\lambda \atop |f(x_1)-f(x_2)|\leq d(x_1,x_2) } \frac{1}{\epsilon} \left\lbrace \int f d \mu_{k}-\int f d \mu_{j}   \right\rbrace \\
	&=\frac{W^{(\lambda)}\left(\mu_{k}, \mu_{j}\right)}{\varepsilon}.
\end{aligned}
\]
Note that $\mu_{j}[U_{\varepsilon}] \geq \mu_{j}[K_{\varepsilon}] \geq 1-\varepsilon$ if $j \leq N$, and, for each $k$, we can find $j=j(k)$ such that $W_{1}\left(\mu_{k}, \mu_{j}\right) \leq \varepsilon^{2}$. We therefore have 
$$
\mu_{k}\left[\tilde{U}_{\varepsilon}\right] \geq 1-\varepsilon-\frac{\varepsilon^{2}}{\varepsilon}=1-2 \varepsilon.
$$
Now we have shown the following: For each $\varepsilon>0$ there is a finite family $\left(x_{i}\right)_{1 \leq i \leq m}$ such that all measures $\mu_{k}$ give mass at least $1-2 \varepsilon$ to the set $Z_{\varepsilon} :=\cup B\left(x_{i}, 2 \varepsilon\right)$. 

For $ \ell\in \mathbb{N} $, we can find $\left(x_{i}\right)_{1 \leq i \leq m(\ell)}$ such that 
$$
\mu_{k}\left[\mathcal{X} \backslash \bigcup_{1 \leq i \leq m(\ell)} B\left(x_{i}, 2^{-\ell} \varepsilon\right)\right] \leq 2^{-\ell} \varepsilon.
$$
Now we let 
$$
S_{\varepsilon} :=\bigcap_{1 \leq p \leq \infty} \bigcup_{1 \leq i \leq m(p)} \overline{B\left(x_{i}, 2^{-p} \varepsilon \right)}.
$$
It is clear that $
\mu_{k}[\mathcal{X} \backslash S_{\varepsilon}] \leq \varepsilon
$.

By construction, $ S_{\varepsilon} $ is closed and it can be covered by finitely many balls of arbitrarily small radius, so it is also  totally bounded. We note that in complete metric space, a closed  totally bounded set is equivalent to a compact set. In conclusion, $ S_{\varepsilon} $ is compact. The result then follows.  \qed

\vspace{4mm}

\noindent \textbf{Proof of Theorem~\ref{thm4}.}
First, we prove that $\mu_{k}$ converges to $\mu$ in ${\cal P}(\mathcal{X})$ if $W^{(\lambda)}\left(\mu_{k}, \mu\right) \rightarrow 0$. 
By Lemma~\ref{lemma2}, the sequence $\left(\mu_{k}\right)_{k \in \mathbb{N}}$ is tight, so there is a subsequence $\left(\mu_{k^{\prime}}\right)$ such that $\mu_{k^{\prime}}$ converges weakly to some probability measure $\widetilde{\mu}$.  Let $ h $ be the solution of the dual form of ROBOT for $\widetilde{\mu}$ and $ \mu $. Then, by Theorem~\ref{thm3},
$$
\begin{aligned}
	W^{(\lambda)}(\widetilde{\mu}, \mu) & =   \int h d  \widetilde{\mu} -\int h d \mu  \\
	&\mathop{=}\limits_{(1)}   \lim \limits_{k^{\prime} \rightarrow \infty}    \left\lbrace \int h d \mu_{k^{\prime}}-\int h d \mu   \right\rbrace\\
	&\leq \lim \limits_{k^{\prime} \rightarrow \infty}  \sup_{\mathrm{range}(f)\leq 2\lambda \atop |f(x_1)-f(x_2)|\leq d(x_1,x_2) }    \left\lbrace \int f d \mu_{k^{\prime}}-\int f d \mu   \right\rbrace\\
	&=\lim \limits_{k^{\prime} \rightarrow \infty} W^{(\lambda)}(\mu,\mu_{k^{\prime}})\\
	&\mathop{=}\limits_{(2)} \lim\limits_{k  \rightarrow \infty} W^{(\lambda)}(\mu,\mu_{k})
	=0,
\end{aligned}
$$
where $ (1) $ holds since the subsequence $(\mu_{k^{\prime}})$ converges weakly to $ \tilde{\mu} $ and $ (2) $ holds since $ (\mu_{k^{\prime}}) $ is a subsequence of $ (\mu_{k}) $. 
Therefore, $\tilde{\mu}=\mu$, and the whole sequence $\left(\mu_{k}\right)_{k \in \mathbb{N}}$ has to converge to $\mu$ weakly.

Conversely, suppose $\left(\mu_{k}\right)_{k \in \mathbb{N}}$ converges to $\mu $ weakly. 
By Prokhorov's theorem, $\left(\mu_{k}\right)$ forms a tight sequence; also, $\mu$ is tight. Let $ (\pi_k) $ be a sequence representing the optimal plan for $ \mu $  and $ \mu_k $. By Lemma 4.4 in \cite{villani2009optimal}, the sequence $\left(\pi_{k}\right)$ is  tight in $P(\mathcal{X} \times \mathcal{X})$. So, up to the extraction of a subsequence, denoted by $\left(\pi_{k_{1}}\right)$, we may assume that $\pi_{k_{1}}$  converges to $\pi$ weakly in $P(\mathcal{X} \times \mathcal{X})$. Since each $\pi_{k_{1}}$ is optimal, Theorem $5.20$ in \cite{villani2009optimal} guarantees that $\pi$ is an optimal coupling of $\mu $ and $\mu$, so this is the coupling $\pi=(\mathrm{Id}, \mathrm{Id})_{\#} \mu$. In fact, this is independent of the extracted subsequence. So $ \pi_k $ converges to $ \pi $ weakly. Finally, 
\[ 
\lim \limits_{k \rightarrow \infty} W^{(\lambda)}(\mu,\mu_k)
= \lim \limits_{k \rightarrow \infty} \int c_{\lambda} d \pi_k=
\int c_{\lambda} d \pi= 0.
\]
\qed 

\vspace{4mm}

\noindent \textbf{Proof of Theorem~\ref{thm5}.}
First, we prove   completeness.
Let $ (\mu_k )_{k\in \mathbb{N}} $ be a Cauchy sequence. By Lemma~\ref{lemma2}, $ (\mu_k )_{k\in \mathbb{N}} $ is tight and there is a subsequence $ (\mu_{k^{\prime}}) $  which convergence to a measure $ \mu $ in $ \mathcal{P(X)} $.  
Therefore, the continuity of $ W^{(\lambda)} $ (Corollary~\ref{cor1}) entails that 
\[  
\lim \limits_{l\rightarrow \infty}  W^{(\lambda)}(\mu,\mu_{l}) = \lim \limits_{k',l\rightarrow \infty} W^{(\lambda)}(\mu_{k'},\mu_{l})=0.
\]
The result of the completeness follows.

Then, we complete the proof of separability by using a rational step function approximation. 
We notice that  $\mu $ lies in $\mathcal{P(X)} $ and  is integrable. So for  $\varepsilon > 0 $,  we can find a compact set $\mathcal{K}_{\varepsilon} \subset \mathcal{X} $ such that
$\int_{\mathcal{X \textbackslash K_{\varepsilon}}} 2\lambda d\mu \leq \varepsilon.$
Letting $ \mathcal{D} $ be a countable dense set in $ \mathcal{X} $,
we can find a finite family of balls $(B(x_k,\varepsilon))_{1 \leq k\leq N}, x_k \in \mathcal{D}$,  which cover $\mathcal{K}_{\varepsilon} $. Moreover,  by letting 
\[
B_{k}= B(x_k,\varepsilon) \text{\textbackslash} \cup_{j < k} B(x_{j},\varepsilon),
\]
then $\cup_{1\leq k\leq N} {B_k} $ still covers the $\mathcal{K}_{\varepsilon} $, and $B_{k}$ and $B_{\ell}$ are disjoint for $k \neq \ell$. Then by defining a step function 
$f(x) := \sum_{k = 1}^{N} x_k \mathbbm{1}_{x \in B_k}$, 
we can easily find 
\[ 
\int c_{\lambda}(x,f(x)) d\mu(x)\leq \varepsilon \int_{\mathcal{K}_{\varepsilon}} d\mu(x) + \int_{\mathcal{X \textbackslash K_{\varepsilon}}} 2\lambda d\mu(x)  \leq 2\varepsilon.
\]

Moreover, $ f_{\#}\mu $ can be written as $ \sum_{j = 1}^{N} a_j \delta_{x_j}$.  Next, we prove that $ \sum_{j = 1}^{N} a_j \delta_{x_j}$ can be approximated to arbitrary accuracy by another step function $\sum_{j = 1}^{N} b_j \delta_{x_j}  $ with rational  coefficients $ (b_j)_{1 \leq j\leq N}$. Specifically,
\[ 
\begin{aligned}
	&W^{(\lambda)}\left(\sum_{j = 1}^{N} a_j \delta_{x_j},\sum_{j = 1}^{N} b_j \delta_{x_j}\right)\\
	\leq
	&\int \left[ \mathop{\max}\limits_{k,l} d(x_k,x_l) \right] d(\sum_{j = 1}^{N} a_j \delta_{x_j}) + \int \left[ \mathop{\max}\limits_{k,l} d(x_k,x_l) \right] d(\sum_{j = 1}^{N} b_j \delta_{x_j})\\
	\leq& 4\lambda \mathop{\sum_{j = 1}^{N}}|a_j-b_j|.
\end{aligned}
\]
Therefore, we can replace $ a_j $ with some well-chosen rational coefficients $ b_j $, and $ \mu $ can be approximated by $ \sum b_j \delta_{x_j},\, 0\leq  j \leq N$  with arbitrary precision. In conclusion,  the set of  finitely supported measures with rational coefficients is dense in $ \mathcal{P(X)} $. \qed

\vspace{4mm}

\noindent \textbf{Proof of Theorem~\ref{thm:NewCI}.}  The proof derives from \cite[Theorem 5.1]{Lei20}. To apply this result, we have to check that Condition (11) p 779 in \cite{Lei20} holds. 
For this, let $X_1',\ldots,X_n'$ denote i.i.d. data with common distribution $\mu$, independent from $X_1,\ldots,X_n$.
Denote by $I=\{i_1,\ldots,i_{|I|}\}$, fix $j\in \{1,\ldots,|I|\}$ and let $X=(X_j)_{j\in I}$, $X'=(X_{i_1},\ldots,X_{i_j}',\ldots,X_{i_{|I|}})$ (the data $X_{i_j}$ in the original dataset of inliers has been replaced by an independent copy $X_{i_j}'$).
Let $\tilde{\mu}_n^{(I)}=|I|^{-1}(\sum_{i\in I\setminus \{i_j\}}\delta_{X_i}+\delta_{X_{i_j}'})$ denote the associated empirical measure.
Let us define the function $f(X)=W^{(\lambda)}(\mu,\hat{\mu}_n^{(I)})$. We have, by the triangular inequality established in Theorem~\ref{thm1}:
\[
|f(X)-f(X')|\leqslant W^{(\lambda)}(\hat{\mu}^{(I)}_n,\tilde{\mu}^{(I)}_n)\enspace.
\]
By the duality formula given in Theorem~\ref{thm3}, we have moreover
\[
W^{(\lambda)}(\hat{\mu}^{(I)}_n,\tilde{\mu}^{(I)}_n)\leqslant \sup_{\psi\in\Psi}\frac{\psi(X_{i_j})-\psi(X_{i_j}')}{|I|}\leqslant \frac{\min(d(X_{i_j},X_{i_j}'),2\lambda)}{|I|}\enspace.
\]
Therefore, we deduce that, for any $k\geqslant 2$,
\[
\E[W^{(\lambda)}(\hat{\mu}^{(I)}_n,\tilde{\mu}^{(I)}_n)^k]\leqslant \frac{(2\lambda)^{k-2}}{|I|^k}\E[\min(d^2(X_{i_j},X_{i_j}'),(2\lambda)^2)]\enspace.
\]
Thus, the function $f$ satisfies Condition (11) p 779 in \cite{Lei20} with $M=2\lambda/|I|$ and $\sigma_j^2=\sqrt{\E[\min(d^2(X_{i_j},X_{i_j}'),(2\lambda)^2)]}/|I|$, thus by Theorem 5.1 in \cite{Lei20}, we deduce that, for any $t>0$,
\[
\P(W^{(\lambda)}(\hat{\mu}^{(I)}_n,\mu^*)-\E[W^{(\lambda)}(\hat{\mu}^{(I)}_n,\mu^*)]>t)\leqslant \exp\bigg(-\frac{|I|t^2}{2\sigma^2+4\lambda t}\bigg)\enspace.
\]
Next, letting
$$f(X)=-W^{(\lambda)}(\mu,\hat{\mu}_n^{(I)})$$
and repeating the above arguments yield
\[
\P(W^{(\lambda)}(\hat{\mu}^{(I)}_n,\mu^*)-\E[W^{(\lambda)}(\hat{\mu}^{(I)}_n,\mu^*)]<-t)\leqslant \exp\bigg(-\frac{|I|t^2}{2\sigma^2+4\lambda t}\bigg)\enspace.
\] 
Inequality~\eqref{IneqI} then follows from piecing together the above two inequalities.

Inequality~\eqref{IneqO} is obtained by plugging the first inequality into inequality~\eqref{eq:LinkedIn}.
\qed

\vspace{4mm}

\noindent \textbf{Proof of Theorem~\ref{Matt_Th2}.}
The proof is divided into three steps.
We start by using the duality formula derived in Theorem~\ref{thm3} and prove a slight extension of Dudley's inequality in the spirit of \cite{boissard2014mean} to reduce the bound to the computation of the entropy numbers of the $\Psi$ of $1$-Lipschitz functions defined on $B(0,K)$ and taking values in $[-\lambda,\lambda]$. 
Then we compute these entropy numbers using a slight extension of the computation of these numbers done, for example in \cite[Exercise 8.2.8]{vershynin2018high}.
Finally, we put together all these ingredients to derive our final results.

\textbf{Step 1: Reduction to bounding entropy.}
Using the dual formulation of $W^{(\lambda)}$, we get that 
\[
W^{(\lambda)}(\mu,\hat{\mu}_n)=\sup_{\psi\in \Psi}\frac1n\sum_{i=1}^n\psi(X_i)-\E[\psi(X_i)]\enspace.
\]
Fix first $\psi$ and $\psi'$ in $\Psi$. 
The random variables 
\[
Y_i(\psi,\psi')=\psi(X_i)-\E[\psi(X_i)]-\{\psi'(X_i)-\E[\psi'(X_i)]\}
\]
are independent, centered and take values in $[-4\lambda\wedge 4K\wedge 2\|\psi-\psi'\|_\infty,4\lambda\wedge 4K\wedge 2\|\psi-\psi'\|_\infty]$ and therefore by Hoeffding's inequality, the process $(Y_\psi)_{\psi\in \Psi}$, where the variables $$Y_\psi=\frac1n\sum_{i=1}^n\psi(X_i)-\E[\psi(X_i)],$$ has increments $Y_\psi-Y_{\psi'}$ satisfying
\begin{equation}\label{eq:SG}
	\forall s\in \R,\qquad \E\big[\exp\big(s(Y_\psi-Y_{\psi'})\big)\big]\leqslant \exp\bigg(\frac{s^2(\gamma^2\wedge \|\psi-\psi'\|_{\infty}^2)}{n}\bigg)\enspace,
\end{equation}
where, here and in the rest of the proof $\delta=\sqrt{2}(\lambda\wedge K)$.
Introduce the truncated sup-distance 
$$
d_{\infty,\delta}(f,g)=\|f-g\|_{\infty}\wedge \delta,
$$ we just proved that the process $Y_{\psi}$ has sub-Gaussian increments with respect to the distance $d_{\infty,\delta}$.

At this point, we will use a chaining argument to bound the expectation $\E[\sup_{\psi\in\Psi}Y_\psi]$.
Let $k_0$ denote the largest $k$ such that $2^{-k}\geqslant \sqrt{2}\delta$.
As $2^{-k_0}$ is larger than the diameter of $\Psi$ w.r.t. $d_{\infty,\delta}$, we define $\Psi_{k_0}=\{0\}$ and all functions $\psi\in\Psi$ satisfy $d_{\infty,\delta}(\psi,0)=d_{\infty,\delta}(\psi,\Psi_0)\leqslant 2^{-k_0}$.
For any $k\geqslant k_0$ denote by $\Psi_k$ a sequence of $2^{-k}$ nets of $\Psi$ with minimal cardinality, so $|\Psi_k|\leqslant|\Psi_{k+1}|$.
For any $k\geqslant k_0$ and $\psi\in\Psi$, let $\pi_k(\psi)\in \Psi_k$ be such that $\|\psi-\pi_k(\psi)\|_\infty\leqslant 2^{-k}$.
We have $\pi_{k_0}(\psi)=0$ for any $\psi\in \Psi$, so we can write, for any $k>k_0$,
\begin{equation}\label{eq:Chaining0}
	\psi=\psi-\pi_{k_0}(\psi)=\psi-\pi_k(\psi)+\sum_{\ell=k_0+1}^k\pi_{\ell}(\psi)-\pi_{\ell-1}(\psi)\enspace.
\end{equation}
Write $\alpha_n(\psi)=\int\psi\rmd\hat\mu_n-\int\psi\rmd\mu$, we have thus
\begin{equation}\label{eq:Chaining}
	\alpha_n(\psi)=\alpha_n(\psi-\pi_k(\psi))+\sum_{\ell=k_0+1}^k\alpha_n(\pi_{\ell}(\psi)-\pi_{\ell-1}(\psi))\enspace.
\end{equation}
We bound both terms separately.
For any function $g$, $|\alpha_n(g)|\leqslant 2\|g\|_{\infty}$, so $\sup_{\psi\in\Psi}|\alpha_n(\psi-\pi_k(\psi))|\leqslant 2^{1-k}$.
On the other hand, for any $\psi$ and $\ell$, by \eqref{eq:SG},
\[
\forall s\in \R,\qquad \E\bigg[\exp\big(s\alpha_n(\pi_{\ell}(\psi)-\pi_{\ell-1}(\psi))\big)\bigg]\leqslant \exp\bigg(\frac{s^2(\delta^2\wedge\|\pi_{\ell}(\psi)-\pi_{\ell-1}(\psi)\|_\infty^2}n\bigg)\enspace.
\]
For any $\ell>k_0$,
\[
\|\pi_{\ell}(\psi)-\pi_{\ell-1}(\psi)\|_\infty\leqslant \|\pi_{\ell}(\psi)-\psi\|_\infty+\|\psi-\pi_{\ell-1}(\psi)\|_\infty\leqslant 2^{2-\ell}\enspace, 
\]
we deduce that
\begin{equation}\label{eq:SGInc}
	\forall s\in \R,\qquad \E\bigg[\exp\big(s\alpha_n(\pi_{\ell}(\psi)-\pi_{\ell-1}(\psi))\big)\bigg]\leqslant \exp\bigg(\frac{16s^2}{n2^{2\ell}}\bigg)\enspace.
\end{equation}
Now we use the following classical result.
\begin{lemma}\label{lem:PM}
	If $Z_1,\ldots,Z_m$ are random variables satisfying 
	\begin{equation}\label{eq:AssLemmaPM}
		\forall s\in\R,\qquad \E[\exp(sZ_i)]\leqslant \exp(s^2\kappa^2)\enspace,
	\end{equation}
	then 
	\[
	\E[\max_{i}Z_i]\leqslant 2\kappa\sqrt{\log m}\enspace.
	\]
\end{lemma}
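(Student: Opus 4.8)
The plan is to use the classical exponential-moment argument (a Chernoff-type bound combined with a union/sum step), followed by optimization over a free parameter. The hypothesis~\eqref{eq:AssLemmaPM} says precisely that each $Z_i$ is sub-Gaussian with variance proxy $2\kappa^2$, and the target $2\kappa\sqrt{\log m}$ is the textbook maximal inequality for such variables, so no genuinely new idea is needed: the proof chains together a handful of elementary inequalities.

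First I would fix an arbitrary $s>0$. Since $x\mapsto e^{sx}$ is convex and increasing, Jensen's inequality applied to $\E[\max_i Z_i]$ gives
\[
\exp\big(s\,\E[\max_i Z_i]\big)\leqslant \E\big[\exp(s\max_i Z_i)\big]=\E\big[\max_i\exp(sZ_i)\big]\enspace.
\]
Next I would bound the maximum of the nonnegative terms $\exp(sZ_i)$ by their sum and invoke~\eqref{eq:AssLemmaPM} term by term, obtaining
\[
\E\big[\max_i\exp(sZ_i)\big]\leqslant \sum_{i=1}^m\E[\exp(sZ_i)]\leqslant m\exp(s^2\kappa^2)\enspace.
\]
Combining the two displays and taking logarithms yields $s\,\E[\max_i Z_i]\leqslant \log m+s^2\kappa^2$, hence $\E[\max_i Z_i]\leqslant (\log m)/s+s\kappa^2$.

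Finally I would optimize the right-hand side over $s>0$. Setting the derivative to zero gives the minimizer $s=\sqrt{\log m}/\kappa$, and substituting this value makes the two contributions equal, producing $\kappa\sqrt{\log m}+\kappa\sqrt{\log m}=2\kappa\sqrt{\log m}$, which is the claimed bound.

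There is no substantive obstacle here; each step is an elementary inequality. The only points that merit a line of care are, first, that~\eqref{eq:AssLemmaPM} is assumed for every $s\in\R$ and so in particular holds at the positive value selected in the optimization, and second, that the optimization presumes $\log m>0$, i.e.\ $m\geqslant 2$. The case $m=1$ is handled separately and trivially: letting $s\to 0^+$ in~\eqref{eq:AssLemmaPM} and expanding both sides to first order forces $\E[Z_1]\leqslant 0=2\kappa\sqrt{\log 1}$, so the bound persists.
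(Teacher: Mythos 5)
Your proof is correct, and it is the standard argument (Jensen, bounding the max of nonnegative terms by their sum, then optimizing over $s$) that the paper implicitly relies on: the paper states Lemma~\ref{lem:PM} as a ``classical result'' and gives no proof at all, so your write-up simply supplies the intended derivation. The care you take with the degenerate case $m=1$ and with the sign of the optimizing $s$ is appropriate and makes the argument complete.
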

We apply Lemma~\ref{lem:PM} to the random variable $Z_{\psi,\psi'}=\alpha_n(\pi_{\ell}(\psi)-\pi_{\ell-1}(\psi))\big)$.
There are less than $|\Psi_\ell||\Psi_{\ell-1}|$ such variables, and all of them satisfy Assumption~\ref{eq:AssLemmaPM} with $\kappa=4/(\sqrt{n}2^\ell)$ thanks to \eqref{eq:SGInc}.
Therefore, Lemma~\ref{lem:PM} and $|\Psi_{\ell-1}|\leqslant |\Psi_{\ell}|$ imply that
\[
\E\bigg[\sup_{\psi\in\Psi}\alpha_n(\pi_{\ell}(\psi)-\pi_{\ell-1}(\psi))\big)\bigg]\leqslant \frac{8}{\sqrt{n}2^{\ell}}\sqrt{\log\big(|\Psi_\ell||\Psi_{\ell-1}|\big)}\leqslant \frac{8}{\sqrt{n}2^{\ell}}\sqrt{2\log\big(|\Psi_\ell|\big)}\enspace.
\]
We conclude that, for any $k>k_0$, it holds that
\begin{equation}\label{eq:Chaining2}
	\E[W^{(\lambda)}(\mu,\hat{\mu}_n)]\leqslant 2^{1-k}+\frac{8\sqrt{2}}{\sqrt{n}}\sum_{\ell=k_0+1}^k\frac{\sqrt{\log\big(|\Psi_\ell|\big)}}{2^{\ell}}\enspace.
\end{equation}
To exploit this bound, we have now to bound from above the entropy numbers $|\Psi_\ell|$ of $\Psi$.

\textbf{Step 2: Bounding the entropy of $\Psi$.}
Let $\epsilon\in(0,K\wedge\lambda)$ and let $N$ denote an $\epsilon$-net of $B(0,K)$ with size $|N|\leqslant  (3K/\epsilon)^d$ (for a proof that such nets exist, we refer for example to \cite[Corollary 4.2.13]{vershynin2018high}: For any $x\in B(0,K)$, there exists $y\in N$ such that $\|x-y\|\leqslant \epsilon$.
Let also $G$ denote an $\epsilon$-grid of $[-\lambda,\lambda]$ with step size $\epsilon$ and $|G|\leqslant 3\lambda/\epsilon$.
We define the set $\cF$ of all functions $f: B(0,K)\to[-\lambda,\lambda]$ such that 
\[
\forall x,y\in N, \qquad f(x)\in G, \qquad |f(x)-f(y)|\leqslant \|x-y\|\enspace,
\]
and $f$ linearly interpolates between points in $N$.
Starting from $f(0)$ and moving recursively to its neighbors, we see that
\[
|\cF|\leqslant \frac{3\lambda}{\epsilon}3^{|N|}\enspace,
\]
where the first term counts the number of choices for $f(0)$ and $3$ is an upper bound on the number of choices for $f(x)$ given the value of its neighbor $f(x')$ due to the constraint $|f(x)-f(y)|\leqslant \|x-y\|$.
This shows that there exists a numerical constant $C$ such that 
\[
|\cF|\leqslant \frac{3\lambda}{\epsilon}\exp\bigg(\bigg(\frac{CK}{\epsilon}\bigg)^d\bigg)\enspace.
\]
By construction, $\cF\subset\Psi$.
Fix now $\psi\in\Psi$.
By construction of $\cF$, there exists $f\in\cF$ such that $|f(x)-\psi(x)|\leqslant \epsilon$ for any $x\in N$. 
Moreover, for any $y\in B(0,K)$, there exists $x\in N$ such that $\|x-y\|\leqslant \epsilon$.
As both $\psi$ and $f$ are $1$-Lipschitz, we have therefore that
\[
|f(y)-\psi(y)|\leqslant |f(y)-f(x)|+|f(x)-\psi(x)|+|\psi(x)-\psi(y)|\leqslant 3\epsilon\enspace.
\]
We have thus established that there exists a numerical constant $C$ such that
\begin{equation}\label{eq:ENPsi}
	\forall \epsilon\leqslant K\wedge \lambda,\qquad N(\Psi,d_\infty,\epsilon)\leqslant \frac{9\lambda}{\epsilon}\exp\bigg(\bigg(\frac{CK}{\epsilon}\bigg)^d\bigg)\enspace.
\end{equation}

\textbf{Step 3: Conclusion of the proof.} Plugging the estimate \eqref{eq:ENPsi} into the chaining bound \eqref{eq:Chaining2} yields, for any $k>k_0$,
\begin{align*}
	\E[W^{(\lambda)}(\mu,\hat{\mu}_n)]&\leqslant 2^{1-k}+\frac{8\sqrt{2}}{\sqrt{n}}\sum_{\ell=k_0+1}^k\frac{\sqrt{\log\big(9*2^\ell\lambda\big)}+\big(C2^\ell K\big)^{d/2}}{2^{\ell}}\\
	&\leqslant 2^{1-k}+\frac{C}{\sqrt{n}}\bigg(\delta+(CK)^{d/2}\sum_{\ell=k_0+1}^k2^{\ell(d-2)/2}\bigg)\enspace.
\end{align*}
Here, the behavior of the last sum is different when $d=1$ and $d\geqslant 1$.

When $d=1$, the sum is convergent, so we can choose $k=+\infty$ and the bound becomes
\begin{align*}
	\E[W^{(\lambda)}(\mu,\hat{\mu}_n)]&\leqslant  \frac{C}{\sqrt{n}}\big(\delta+(CK)^{1/2}2^{-k_0/2}\big)\\
	&\leqslant C\sqrt{\frac{K\delta}{n}}\enspace.
\end{align*}

When $d\geqslant 2$, the sum is no longer convergent. For $d=2$, we have
\begin{align*}
	\E[W^{(\lambda)}(\mu,\hat{\mu}_n)]&\leqslant 2^{1-k}+\frac{C}{\sqrt{n}}\big(\delta+Kk\big)\enspace.
\end{align*}
Taking $k$ such that $2^{-k}\asymp K/\sqrt{n}$ yields, as $\delta/\sqrt{n}\leqslant K/\sqrt{n}$,
\begin{align}
	\E[W^{(\lambda)}(\mu,\hat{\mu}_n)]&\leqslant \frac{CK}{\sqrt{n}}\log\bigg(\frac{\sqrt{n}}K\bigg)\enspace. \label{Bd2}
\end{align}

Finally, for any $d\geqslant 3$, we have
\[
\E[W^{(\lambda)}(\mu,\hat{\mu}_n)]\leqslant 2^{1-k}+\frac{C}{\sqrt{n}}\bigg(\delta+(CK)^{d/2}2^{k(d/2-1)}\bigg)\enspace.
\]
Optimizing over $k$ yields the choice $2^k=n^{1/d}/(CK)$ and thus, as $\delta/\sqrt{n}\leqslant K/n^{1/d}$,
\begin{equation}
	\E[W^{(\lambda)}(\mu,\hat{\mu}_n)]\leqslant\frac{CK}{n^{1/d}}\enspace. \label{Bd3}
\end{equation}
\qed


\noindent \textbf{Proof of Theorem~\ref{thm6}.}
Before we prove Theorem~\ref{thm6}, we prove the following two lemmas.

\begin{lemma}\label{lemma3}
	Let $ (\theta_n)_{n\geq1}  $ be a sequence in $ \Theta $ and $ \theta \in \Theta $. Suppose $ \rho_{\Theta}(\theta_n,\theta)  $ $\rightarrow$ 0 implies that $ \mu_{\theta_n} $ convergence weakly to $ \mu_{\theta} $. Then the map $ (\theta,\mu) \mapsto W^{(\lambda)}(\mu_{\theta},\mu) $ is continuous, where $(\theta,\mu) \in  \Theta\times \mathcal{P(X)} $.
\end{lemma}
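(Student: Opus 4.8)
The plan is to exploit the fact that, by Theorem~\ref{thm4}, the weak topology on $\mathcal{P(X)}$ coincides with the topology induced by the metric $W^{(\lambda)}$. Consequently both $\Theta$ (with $\rho_{\Theta}$) and $\mathcal{P(X)}$ (with $W^{(\lambda)}$) are metrizable, hence so is the product $\Theta\times\mathcal{P(X)}$, and continuity of $(\theta,\mu)\mapsto W^{(\lambda)}(\mu_\theta,\mu)$ can be checked sequentially.

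First I would fix an arbitrary $(\theta,\mu)\in\Theta\times\mathcal{P(X)}$ together with an arbitrary sequence $(\theta_n,\mu_n)$ converging to $(\theta,\mu)$ in the product topology; that is, $\rho_{\Theta}(\theta_n,\theta)\to 0$ and $\mu_n\to\mu$ weakly, the latter being equivalent to $W^{(\lambda)}(\mu_n,\mu)\to 0$ by Theorem~\ref{thm4}. By the hypothesis of the lemma (which is precisely Assumption~\ref{asum2}), $\rho_{\Theta}(\theta_n,\theta)\to 0$ forces $\mu_{\theta_n}\to\mu_\theta$ weakly, and hence $W^{(\lambda)}(\mu_{\theta_n},\mu_\theta)\to 0$, again invoking Theorem~\ref{thm4}.

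The conclusion then follows in one of two equivalent ways. Either I apply Corollary~\ref{cor1} directly to the two weakly convergent sequences $\mu_{\theta_n}\to\mu_\theta$ and $\mu_n\to\mu$, obtaining $W^{(\lambda)}(\mu_{\theta_n},\mu_n)\to W^{(\lambda)}(\mu_\theta,\mu)$; or, more elementarily, I use the reverse triangle inequality associated with the metric $W^{(\lambda)}$ established in Theorem~\ref{thm1},
\[
\big| W^{(\lambda)}(\mu_{\theta_n},\mu_n) - W^{(\lambda)}(\mu_\theta,\mu) \big| \leq W^{(\lambda)}(\mu_{\theta_n},\mu_\theta) + W^{(\lambda)}(\mu_n,\mu),
\]
whose right-hand side tends to $0$ by the previous step. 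Since $(\theta,\mu)$ and the sequence were arbitrary, the map is sequentially, and therefore (by metrizability) genuinely, continuous on $\Theta\times\mathcal{P(X)}$.

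There is no serious obstacle here: the whole argument rests on the metric structure of $W^{(\lambda)}$ and on the fact, proved in Theorem~\ref{thm4}, that $W^{(\lambda)}$ metrizes weak convergence. The only point demanding a little care is the identification of the topology on $\mathcal{P(X)}$ as the weak topology, which Theorem~\ref{thm4} lets me treat as the $W^{(\lambda)}$-topology; this is exactly what makes sequential continuity equivalent to continuity and renders the reverse triangle inequality applicable.
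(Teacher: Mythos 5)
Your proposal is correct and follows essentially the same route as the paper, which simply notes that the lemma "follows directly from Corollary~\ref{cor1}"; you have spelled out the sequential argument (weak convergence of $\mu_{\theta_n}$ via the hypothesis, metrizability via Theorem~\ref{thm4}, and either Corollary~\ref{cor1} or the triangle inequality to conclude) that the paper leaves implicit.
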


\begin{proof}
	The result follows directly from Corollary~\ref{cor1}.
\end{proof}

\begin{lemma}\label{lemma4}
	The function $\left(\nu, \mu^{(m)}\right) \mapsto {\rm E} W^{(\lambda)}\left(\nu, \hat{\mu}_m\right)$ is continuous with respect to weak convergence. Furthermore, if $\rho_{\Theta}\left(\theta_n, \theta\right) \rightarrow 0$ implies that $\mu_{\theta_n}^{(m)}$ converges weakly to $\mu_\theta^{(m)}$, then the map $(\nu, \theta) \mapsto$ ${\rm E} W^{(\lambda)}\left(\nu, \hat{\mu}_{\theta, m}\right)$ is continuous.
\end{lemma}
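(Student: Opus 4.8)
The plan is to establish the first continuity statement directly and then obtain the second as an immediate corollary by composing with the assumed continuity of $\theta\mapsto\mu_\theta^{(m)}$. Write
\[
{\rm E}\,W^{(\lambda)}(\nu,\hat{\mu}_m)=\int_{\mathcal{X}^m}g_\nu(z_{1:m})\,d\mu^{(m)}(z_{1:m}),\qquad g_\nu(z_{1:m}):=W^{(\lambda)}\Big(\nu,\tfrac1m\textstyle\sum_{i=1}^m\delta_{z_i}\Big).
\]
Along a sequence $(\nu_k,\mu_k^{(m)})$ converging to $(\nu,\mu^{(m)})$ (weak convergence in each coordinate of the product space $\mathcal{P}(\mathcal{X})\times\mathcal{P}(\mathcal{X}^m)$), both the integrand $g_{\nu_k}$ and the integrating measure $\mu_k^{(m)}$ vary simultaneously, so the first thing I would do is decouple these two dependencies via the triangle inequality.

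To this end I would record two facts about $g_\nu$. First, by the triangle inequality for $W^{(\lambda)}$ (Theorem~\ref{thm1}), $\,|g_{\nu_k}(z_{1:m})-g_\nu(z_{1:m})|\le W^{(\lambda)}(\nu_k,\nu)$ uniformly in $z_{1:m}$; since $W^{(\lambda)}$ metrizes weak convergence (Theorem~\ref{thm4}), the right-hand side tends to $0$, so $g_{\nu_k}\to g_\nu$ uniformly on $\mathcal{X}^m$. Second, $g_\nu$ is bounded by $2\lambda$ (because $c_{\lambda}\le 2\lambda$) and continuous on $\mathcal{X}^m$: if $z_{1:m}^{(j)}\to z_{1:m}$ coordinatewise, then $m^{-1}\sum_i\delta_{z_i^{(j)}}\to m^{-1}\sum_i\delta_{z_i}$ weakly, whence Corollary~\ref{cor1} (applied with the first argument held fixed at $\nu$) yields $g_\nu(z_{1:m}^{(j)})\to g_\nu(z_{1:m})$.

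With these facts the first claim follows from the splitting
\[
\Big|\int g_{\nu_k}\,d\mu_k^{(m)}-\int g_\nu\,d\mu^{(m)}\Big|\le \int|g_{\nu_k}-g_\nu|\,d\mu_k^{(m)}+\Big|\int g_\nu\,d\mu_k^{(m)}-\int g_\nu\,d\mu^{(m)}\Big|,
\]
where the first term is at most $W^{(\lambda)}(\nu_k,\nu)\to 0$ and the second vanishes because $g_\nu$ is bounded continuous and $\mu_k^{(m)}\to\mu^{(m)}$ weakly. For the second claim, if $\rho_{\Theta}(\theta_k,\theta)\to 0$ the hypothesis gives $\mu_{\theta_k}^{(m)}\to\mu_\theta^{(m)}$ weakly, hence $(\nu_k,\mu_{\theta_k}^{(m)})\to(\nu,\mu_\theta^{(m)})$; since ${\rm E}\,W^{(\lambda)}(\nu,\hat{\mu}_{\theta,m})$ is precisely the map of the first part evaluated at $(\nu,\mu_\theta^{(m)})$, continuity of $(\nu,\theta)\mapsto {\rm E}\,W^{(\lambda)}(\nu,\hat{\mu}_{\theta,m})$ is immediate.

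The main obstacle I anticipate is controlling the two simultaneous limits cleanly: the integrand and the measure both move with $k$. This is resolved only because $W^{(\lambda)}$ is a genuine metric bounded by $2\lambda$, which is exactly what turns the uniform integrand estimate into an honest bound on the expectations without invoking any moment condition on the measures. The two points requiring care are the invocation of Corollary~\ref{cor1} with one argument frozen, and the elementary observation that coordinatewise convergence in $\mathcal{X}^m$ forces weak convergence of the associated empirical measures.
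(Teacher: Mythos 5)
Your proof is correct, and it follows a genuinely different route from the paper's. The paper invokes Lemma A2 of Bernton et al.: one realizes the weak convergence $\mu_k^{(m)}\to\mu^{(m)}$ almost surely via a Skorokhod-type representation, applies the joint continuity of $W^{(\lambda)}$ (Corollary~\ref{cor1}) to get almost-sure convergence of the random variables $W^{(\lambda)}(\nu_k,\hat{\mu}_{k,m})$, and then exchanges limit and expectation by dominated convergence, the bound $W^{(\lambda)}\leq 2\lambda$ replacing the Fatou step of the unbounded case. You instead decouple the two moving arguments: the dependence on $\nu$ is controlled \emph{uniformly} in $z_{1:m}$ by the reverse triangle inequality $|g_{\nu_k}-g_\nu|\leq W^{(\lambda)}(\nu_k,\nu)$ together with Theorem~\ref{thm4}, while the dependence on $\mu^{(m)}$ is handled by observing that $g_\nu$ is a bounded continuous test function on $\mathcal{X}^m$ (boundedness from $c_\lambda\leq 2\lambda$, continuity from Corollary~\ref{cor1} plus the elementary fact that coordinatewise convergence forces weak convergence of the empirical measures), so weak convergence of $\mu_k^{(m)}$ applies directly by definition. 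What your approach buys is the elimination of both the almost-sure representation and the dominated convergence theorem; it also isolates precisely where the boundedness of $W^{(\lambda)}$ enters (only to make $g_\nu$ an admissible test function), whereas the paper needs it as an integrable dominating envelope. The only points deserving an explicit word in a polished write-up are the measurability of $g_\nu$ (immediate from its continuity) and the fact that sequential continuity suffices since weak convergence on $\mathcal{P}(\mathcal{X})$ and $\mathcal{P}(\mathcal{X}^m)$ is metrizable on Polish spaces; neither is a gap.
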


\begin{proof}
	The  proof moves along the same lines as the proof of Lemma A2 in \cite{bernton2019parameter}.
	It is worth noting that $ 0 \leq W^{(\lambda)} \leq 2\lambda $, so we can use dominated convergence theorem instead of Fatou's lemma. Because of the continuity of robust Wasserstein distance,   we have
	\[ {\rm E} W^{(\lambda)}\left(\nu, \hat{\mu}_m\right) = {\rm E} \lim _{k \rightarrow \infty} W^{(\lambda)}\left(\nu_k, \hat{\mu}_{k, m}\right) = \lim _{k \rightarrow \infty} {\rm E} W^{(\lambda)}\left(\nu_k, \hat{\mu}_{k, m}\right).\]
	This implies  $\left(\nu, \theta\right) \mapsto {\rm E} W^{(\lambda)}\left(\nu, \hat{\mu}_{\theta,m}\right)$ is continuous.
\end{proof}

In order to prove Theorem~\ref{thm6}, we introduce the concept of the epi-converge as follows.

\begin{definition}\label{def.EpiConverge}
	We call a sequence of functions $f_n: \Theta \rightarrow \mathbb{R}$  epi-converge to $f: \Theta \rightarrow \mathbb{R}$ if for all $\theta \in \Theta$,
	$$
	\begin{cases}\liminf _{n \rightarrow \infty} f_n\left(\theta_n\right) \geq f(\theta) & \text { for every sequence } \theta_n \rightarrow \theta \\ \limsup _{n \rightarrow \infty} f_n\left(\theta_n\right) \leq f(\theta) & \text { for some sequence } \theta_n \rightarrow \theta\end{cases}
	$$
\end{definition}

For any $\nu \in \mathcal{P}(\mathcal{X})$, the continuity of the map $\theta \mapsto$ $W^{(\lambda)}\left(\nu, \mu_\theta\right)$ follows from Lemma~\ref{lemma3}, via Assumption~\ref{asum2}. Next, by definition of the infimum, the set $B_{\star}(\varepsilon)$ with the $\varepsilon$ of Assumption~\ref{asum3} is non-empty. Moreover, since $\theta \mapsto W^{(\lambda)}\left(\mu_{\star}, \mu_\theta\right)$ is continuous,  the set  $\operatorname{argmin}_{\theta \in \Theta} W^{(\lambda)} \left(\mu_{\star}, \mu_\theta\right) $ is non-empty and the set $B_{\star}(\varepsilon)$ is closed. Then, by Assumption~\ref{asum3}, $B_{\star}(\varepsilon)$ is a compact set.

The next step is to prove the sequence of functions $\theta \mapsto W^{(\lambda)}\left(\hat{\mu}_n(\omega), \mu_\theta\right)$ epi-converges to $\theta \mapsto W^{(\lambda)}\left(\mu_{\star}, \mu_\theta\right)$ by applying  Proposition 7.29 of~\cite{ramponi2020neural}. Then we can obtain the results by  Proposition 7.29 and Theorem 7.31 of ~\cite{ramponi2020neural}. These steps are similar to~\cite{bernton2019parameter} and are hence omitted.\qed

\vspace{4mm}

Moreover, we state 

\begin{theorem}[Measurability of MRWE]\label{thm7}
	Suppose that $\Theta$ is a $\sigma$-compact Borel measurable subset of $\mathbb{R}^{d_\theta}$. Then under Assumption~\ref{asum2}, for any $n \geq 1$ and $\varepsilon>0$, there exists a Borel measurable function $\hat{\theta}_n^{\lambda}: \Omega \rightarrow \Theta$ that satisfies $	\hat{\theta}_n^{\lambda}(\omega) \in \operatorname{argmin}_{\theta \in \Theta} W^{(\lambda)}\left(\hat{\mu}_n(\omega), \mu_\theta\right), $ if  this set is non-empty, otherwise, $\hat{\theta}_n^{\lambda}(\omega) \in \varepsilon\text{-}\operatorname{argmin}_{\theta \in \Theta} W^{(\lambda)}\left(\hat{\mu}_n(\omega), \mu_\theta\right)$.
\end{theorem}

Th.~\ref{thm7} implies that, for any $ n \geq 1 $, there is a  measurable function $\hat{\theta}_n^{\lambda}$ that coincides  (or it is   very close) to the  minimizer of $  W^{(\lambda)}\left(\hat{\mu}_n(\omega), \mu_\theta\right)$. \\

\noindent \textbf{Proofs of Theorems \ref{thm8}, \ref{thm9}, \ref{thm10} and~\ref{thm7}.}
The proofs of these theorems follow by moving along the same lines as the proofs of Theorems  2.4, 2.5, 2.6 and 2.2 in~\cite{bernton2019parameter}. \qed


\subsection{The choice of  $\lambda $ } \label{Sec: gen_choice}

\subsubsection{Some considerations based on RWGAN} \label{Sec.choice}


Here we focus on the applications of RWGAN and MERWE and illustrate, via Monte Carlo experiments,  how $ \lambda $ would affect the results. 
First, we recall the  experiment in \S~\ref{Sec.SimWesti}: MERWE of location for sum of log-normal.  We keep other conditions unchanged, and just change the parameter $ \lambda $ to see how the estimated value changes.  We set $ n=200 $, $ \varepsilon=0.1 $ and $ \eta=4 $ and other parameters are still set as in \S~\ref{Sec.SimWesti}. The result is shown in Figure~\ref{fig:choiceofmerwe}. We can observe that the MSE is large when $ \lambda $ is small. The reason for this is that we truncate the cost matrix prematurely at this time, making robust Wasserstein distance unable to distinguish the differences between distributions. When $ \lambda $ is large, there is negligible difference between Wasserstein distance and robust Wasserstein distance, resulting in almost the same MSE estimated by MERWE and MEWE. When $ \lambda $ is moderate, within a large range (e.g., from $ \exp(2) $ to $ \exp(5.5) $), the MERWE outperforms the MEWE.

\begin{figure}[http]
	\centering
	\includegraphics[scale=0.2]{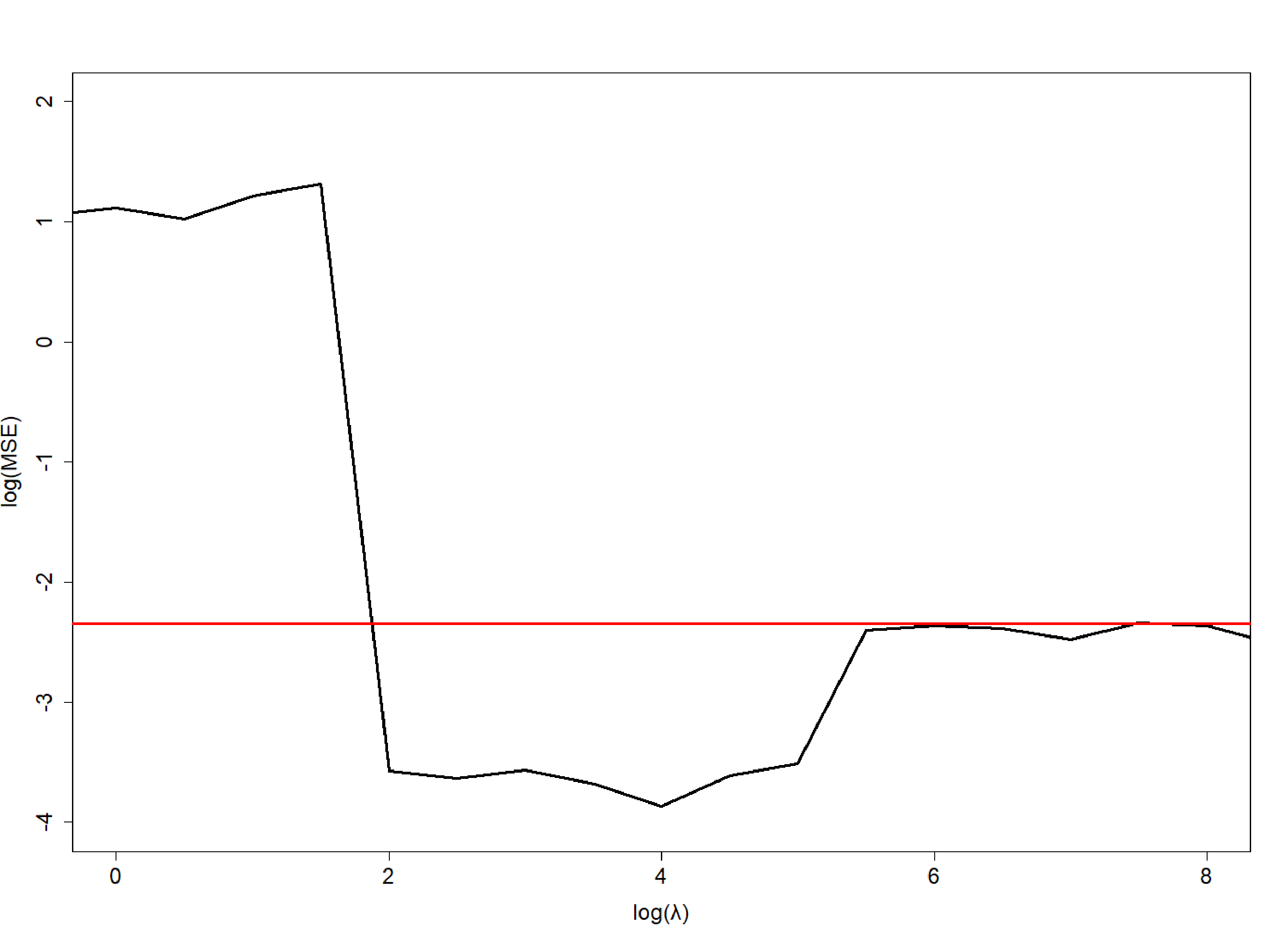}
	\caption {$\log(\rm MSE)$ of MERWE based on 1000 times experiment for different values of $\log(\lambda)$. The horizontal (red) line represents $ \log(\rm MSE) $ of MEWE.}
	\label{fig:choiceofmerwe}
\end{figure}

We consider a novel way to study parameter selection based on our Theorem~\ref{thm3},  which implies that modifying the constraint on $ \mathrm{range}(\psi) $ is equivalent to changing the parameter $ \lambda $ in equation~\eqref{equ8}. This reminds us that we can use RWGAN-1 to study how to select $\lambda$. 


We consider the same synthetic data  as in \S~\ref{app.rwgan} and set $n=1000, \varepsilon=0.1$ and $\eta=2 $. To quantify training quality of RWGAN-1, we take the   Wasserstein distance of order 1 between the clean data and data generated from RWGAN-1. Specifically,  we draw 1000 samples from model~\eqref{true model} and generate 1000 samples from the model trained by RWGAN-1 with different parameter $\lambda $. Then, we calculate the  empirical  Wasserstein distance of order 1 between them. The plot of the empirical  Wasserstein distance against $\log(\lambda)$ is shown in Figure~\ref{fig:W}. The Wasserstein distance has a trend of decreasing first and increasing later on: it reaches the minimum at $\log(\lambda) = -2$. Also, we  notice that over a large range of $\lambda$ values (i.e. from $\exp(-4)$ to $\exp(0)$), the Wasserstein distance is small: this illustrates that  RWGAN-1 has a good outlier detection ability for a choice of the penalization parameter within this range. 

\begin{figure}[http]
	\centering
	\includegraphics[scale=0.22]{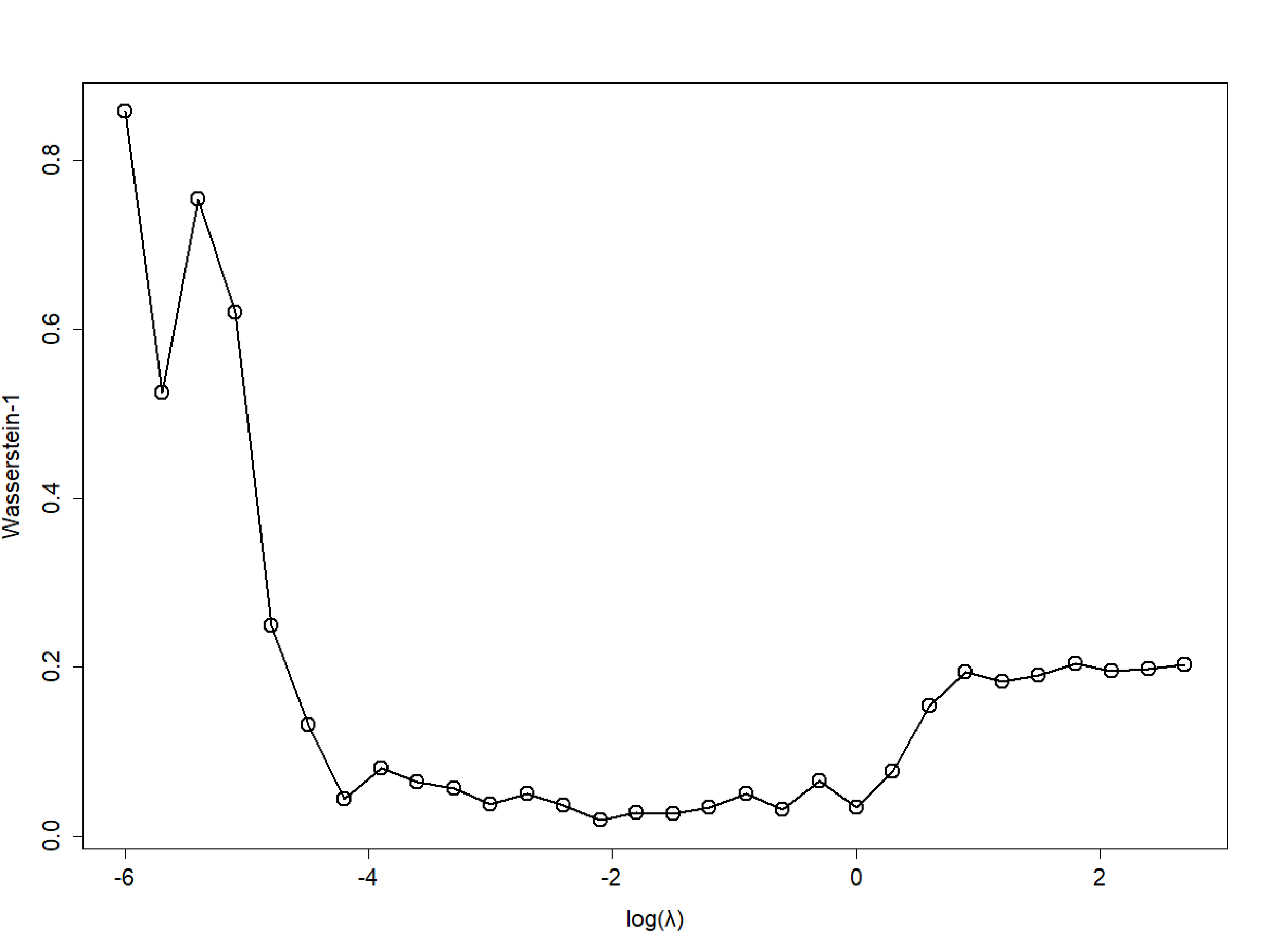}
	\caption {Plot of the empirical Wasserstein distance of order 1	 between the clean data and data generated from RWGAN-1 for different values of $\log(\lambda)$.}
	\label{fig:W}
\end{figure}

Heuristically, a large $\lambda$ means the truncated cost function is just slightly different from the non-truncated one. Therefore, RWGAN-1 will be greatly affected by outliers.  When $\lambda$ is small, the cost function is severely truncated, which means that we are modifying too much of the distribution. This deviates from our main purpose of GAN (i.e., $ \mathrm{P_r} \approx \mathrm{P_{\theta}}$). Hence, the distribution of final outputs generated from RWGAN-1 becomes very different from the reference one. 

Both experiments suggest that to achieve good performance of ROBOT, one could choose a moderate $\lambda$ within a large range. Actually, the range of a ``good" $\lambda$ depends on the size and proportion of outliers: When the size and proportion are close to zero, both moderate and large $\lambda$ will work well, since only slight or no truncation of the cost function is needed; when the size and proportion are large, a large truncation is needed, so this range becomes smaller. Generally, we prefer to choose a slightly larger $ \lambda $, as  we found in the experiment that when we use a larger $ \lambda $, the performance of ROBOT is at least no worse than that using OT.


\color{black}
\subsubsection{Selection based on concentration inequality}\label{App.lambda}

\textbf{Methodology.} In the Monte Carlo setting  of Section~\ref{Sec.SimWesti}, 
Figure \ref{fig:choiceofmerwe} in Appendix~\ref{Sec.choice} illustrates that MERWE has good performance (in terms of MSE) when $\lambda$ is within the range $[\exp(2), \exp(5)]$. The estimation of the MSE represents, in principle,  a valuable criterion that one can apply to select $\lambda$. However, while this idea is perfectly fine in the case of synthetic data, its applicability seems to be hard in real data examples, where the true underling parameter is not known. To cope with this issue, \cite{WJ05} propose to use a pilot estimators which can yield an estimated MSE. The same argument can be considered also for the our estimator, but this entails the need for selecting a pilot estimator: a task that cannot be easy for some complex models and for the models considered in this paper. Therefore, we propose and investigate the use of an alternative approach, which yields a range of values of $\lambda$ similar to the one obtained computing the MSE but it does not need to select such a pilot estimator.

To start with, let us consider Th.~\ref{thm:NewCI}. At the reference model, namely in the absence of contamination  (i.e. $ \vert O \vert = 0$ and $\vert I \vert =n$), the mean concentration in \eqref{IneqI} implies that $\left \vert W^{\lambda}(\hat\mu_n,\mu^\ast) -  {\rm E}[W^{\lambda}(\hat\mu_n,\mu^\ast)] \right\vert$ takes on values larger than  the threshold 
\beq
\sigma\sqrt{\frac{2t}{n}}+\frac{4\lambda t}{n} \label{Eq.Ozero} 
\eeq
with probability bounded by $2\exp(-t)$. In the presence of contamination, from (\ref{MattCI}) we have that the probability that the deviations of 
 $W^{\lambda}(\hat\mu_n,\mu^\ast) $ from its mean are larger than the threshold
\beq
\sigma\sqrt{\frac{|I|}n}\sqrt{\frac{2t}{n}}+\frac{4\lambda t}{n}+\frac{4\lambda|O|}n \label{Eq.Opos}
\eeq
is still bounded by $2\exp(-t)$.

Then,  to select $\lambda$  we propose to set up the following criterion, which compares the changes in   (\ref{Eq.Ozero}) and (\ref{Eq.Opos}) due to different values of $\lambda$. 
More in detail, for a fixed $ t>0 $, we  assume a contamination level $\tau$ which represents a guess that the statistician does about the actual level of data contamination. Then we compare the thresholds  at different values of $\lambda$. To this end, we compute the ratio between (\ref{Eq.Ozero}) and (\ref{Eq.Opos}) and define the quotient
\begin{equation} 
\mathcal{Q}(n,\tau , \lambda,t):=\frac{\sigma\sqrt{\frac{2t}{n}}+\frac{4\lambda t}{n}}{\sigma\sqrt{1-\tau}\sqrt{\frac{2t}{n}}+\frac{4\lambda t}{n}+ 4 \tau \lambda }.
\label{Eq. Q}
\end{equation} 
Assuming that  $\sigma$  is a twice differentiable function of $\lambda$, one can check
that $\mathcal{Q}$ decreases monotonically in $\lambda$. Indeed, the partial derivative with respect to $\lambda$ of $\mathcal{Q}(n,\tau , \lambda,t)$ is 
\begin{equation} 
\frac{\partial \mathcal{Q}}{\partial \lambda} = C_{n,\tau,t}(\lambda) \left(\lambda \sigma^{\prime}(\lambda) - \sigma(\lambda) \right), \label{Eq. derQ}
\end{equation} 
where 
$$C_{n,\tau,t}(\lambda) =  \frac{4 \sqrt{2} t \sigma(\lambda)\left((n+t) \sqrt{\frac{\tau t \sigma(\lambda)^2}{n}}-\tau\left(t \sqrt{\frac{t \sigma(\lambda)^2}{n}}+n \sqrt{\frac{\tau t \sigma(\lambda)^2}{n}}\right)\right)}{\sqrt{\frac{t \sigma(\lambda)^2}{n}} \sqrt{\frac{\tau t \sigma(\lambda)^2}{n}}\left(4 n(1-\tau) \lambda+4 t \lambda+\sqrt{2} n \sqrt{\frac{\tau t \sigma(\lambda)^2}{n}}\right)^2}$$ is positive when $\lambda > 0$.  
Moreover, let us denote by $\sigma^{\prime}$ and $\sigma^{\prime \prime}$ the first and second derivative of $\sigma$ with respect to $\lambda$. 
So the monotonicity of $\mathcal{Q}$ depends on $\gamma(\lambda) := \lambda \sigma^{\prime}(\lambda) - \sigma(\lambda)$. 
Now, notice  that $\gamma=0$ when $\lambda= 0$ and that  $\gamma^{\prime} = \lambda \sigma^{\prime \prime}(\lambda)$. 
Moreover, from the definition of $\sigma$, $\sigma$ is increasing with respect to $\lambda$ and $\sigma^{\prime \prime} < 0$ when $\lambda > 0$. 
So $\gamma^{\prime} <0$ when $\lambda >0$  and $\gamma < 0$ when $\lambda >0$. This means $\mathcal{Q}$ decreases monotonically.

Next we explain how $ {\partial \mathcal{Q}}/{\partial \lambda}$ can be applied to define a data driven criterion for the selection of $\lambda$.
We remark that for most models, the computation of $\mathcal{Q}$ and of $ {\partial \mathcal{Q}}/{\partial \lambda}$  need to be performed numerically. To illustrate this aspect and to elaborate further on the  selection of $\lambda$, we consider the  same Monte Carlo setting  of Section~\ref{Sec.SimWesti}, where $ \varepsilon = {\vert O \vert}/{n} $ is the actual  contamination level. In Appendix~\ref{Sec.choice}, we found through numerical experiments that MERWE has good performance for $\lambda$ within the $[\exp(2), \exp(5)]$. Hence, in our numerical analysis, we consider this range as a benchmark for evaluating the  $\lambda$ 
selection procedure based on $\mathcal{Q}(n,\tau , \lambda,t)$.  Moreover, we need to  specify a value for the contamination level and fix  $t$. In our numerical analysis we set $\tau = 0.1$ and  $t=1$. We remark that the specified $\tau$ coincides with the percentage of outliers $\varepsilon$: later on  we discuss the case when $\tau\neq \varepsilon$. {Moreover, we point out that different values of $t$ simply imply that the proposed selection criterion concerns the stability of other quantiles of the distribution of $W^{\lambda}(\hat\mu_n,\mu^\ast)$}.

 In Figure~\ref{fig:q}, we display the plot of $\mathcal{Q}$ with respect to $\lambda$: $\mathcal{Q}$ decreases monotonically, with slope which decreases as $\lambda \to \infty$. The plot suggests that, when $\lambda$ is small, $\mathcal{Q}$  decreases fast for small increments in the value of $\lambda$. This is due to the fact that, for $\lambda$ small, a small change of $\lambda$  has a large influence on the transport cost $c^{(\lambda)}$ and hence on the values of $W^{(\lambda)}$. On the contrary, when  $\lambda$ is sufficiently large, larger values of the trimming parameter do not have large influence on the transport cost $c^{(\lambda)}$. As a result, looking at the (estimated) slope of (estimated) $\mathcal{Q}$ allows to identify a range of $\lambda$ values which avoids 
the small values (for which $W^{(\lambda)}$ trims too many observations and it is maximally concentrated about its mean) and large values  (for which $W^{(\lambda)}$ trims too few observations and it is minimally concentrated about its mean).  This is the same information available in Figure \ref{fig:choiceofmerwe}, but differently from the computation which yields the MSE in that figure, our method does not entail either the need for knowing the true value of the model parameter or the need for a pilot estimator.\\

\textbf{Implementation.}  Operationally, to obtain the desired range of $\lambda$ values, 
one needs to create a sequence of $\{\lambda_k\}$  with $$0<\lambda_1 < \lambda_2 < \ldots < \lambda_N,\, N>1 .$$ Then, one computes
the sequence of {\it absolute slopes} $$\mathrm{AS}{(n, \tau, \lambda_k, t)} = \frac{\vert {\mathcal{Q}}{(n, \tau, \lambda_{k+1}, t)} - {\mathcal{Q}}{(n, \tau, \lambda_{k}, t)}  \vert}{\lambda_{k+1}-\lambda_{k}}, k \geq 1,$$ 
at different values of $\lambda_k$. We remark that  to compute $\mathrm{AS}$, we need to estimate $ \sigma^2$. To this end, a robust estimator of the variance should be applied. In our numerical exercises, we find numerically convenient to fix $ X_i^{\prime}$ to $\bar{X}_{\mathrm{rob}}$ which represents the geometric median for real-valued variable $X$. Then, we replace $$\rm{E}\left[\min \left(d^2\left(X_i, X_i^{\prime}\right), (2\lambda)^2\right)\right]$$ by
$$({1}/{n}) \sum_{i=1}^n \min \left\{d^2\left(X_i, \bar{X}_{\mathrm{rob}}\right), (2\lambda)^2\right\}.$$  

Once the different values of AV are available,   one needs to make use of them to select $\lambda$. To this end, we propose to follow the idea of Algorithm 1 in \cite{la2015robust}  and 
choose
$$
\lambda^{\star} :=  \min \left\lbrace  \lambda_{k}: {\mathrm{AS}}{(n, \tau, \lambda_i, t)} \leq \iota, \text{for all } \lambda_{i} \geq  \lambda_{k} \right\rbrace,
$$ where $\iota$  is a user-specified value. According to our numerical experience it is sensible to specify values of  $ \iota $ which are close to zero: they typically define estimators having a performance that remains stable across different Monte Carlo settings, while avoiding to trim too many observations.  For the sake of illustration, in Figure \ref{fig:as} we display the estimated  AS related to $\mathcal{Q}$ as in Figure \ref{fig:q}: we see that its maximum value is around 0.05. We propose to set $\iota = 0.001$, a value which is slightly larger than zero (namely, the value that  $\mathrm{AS}$ has when $\lambda$ is large). This yields  
$\lambda^{\star} \approx 26$ ($\ln \lambda  \approx 3.25$), which is within the range $[\exp(2), \exp(5)]$ where MERWE produces small MSE as displayed in Figure~\ref{fig:choiceofmerwe}. A reassuring aspect emerges from Figure \ref{fig:as}: in case one decides to set slightly larger values of $\iota$ (e.g. values between $0.008$ and $0.001$), the selection procedure is still able to provide a value of $\lambda^{\star} $ within the benchmark $[\exp(2), \exp(5)]$. 

Essentially, we conclude that any value of $\iota$ which is greater than zero and smaller than the max of AS selects a  $\lambda^{\star} $ which in turns yields a good performance (in terms of MSE) of our estimator. \\

\begin{figure}[http]
	\centering
	\includegraphics[width=0.75\textwidth]{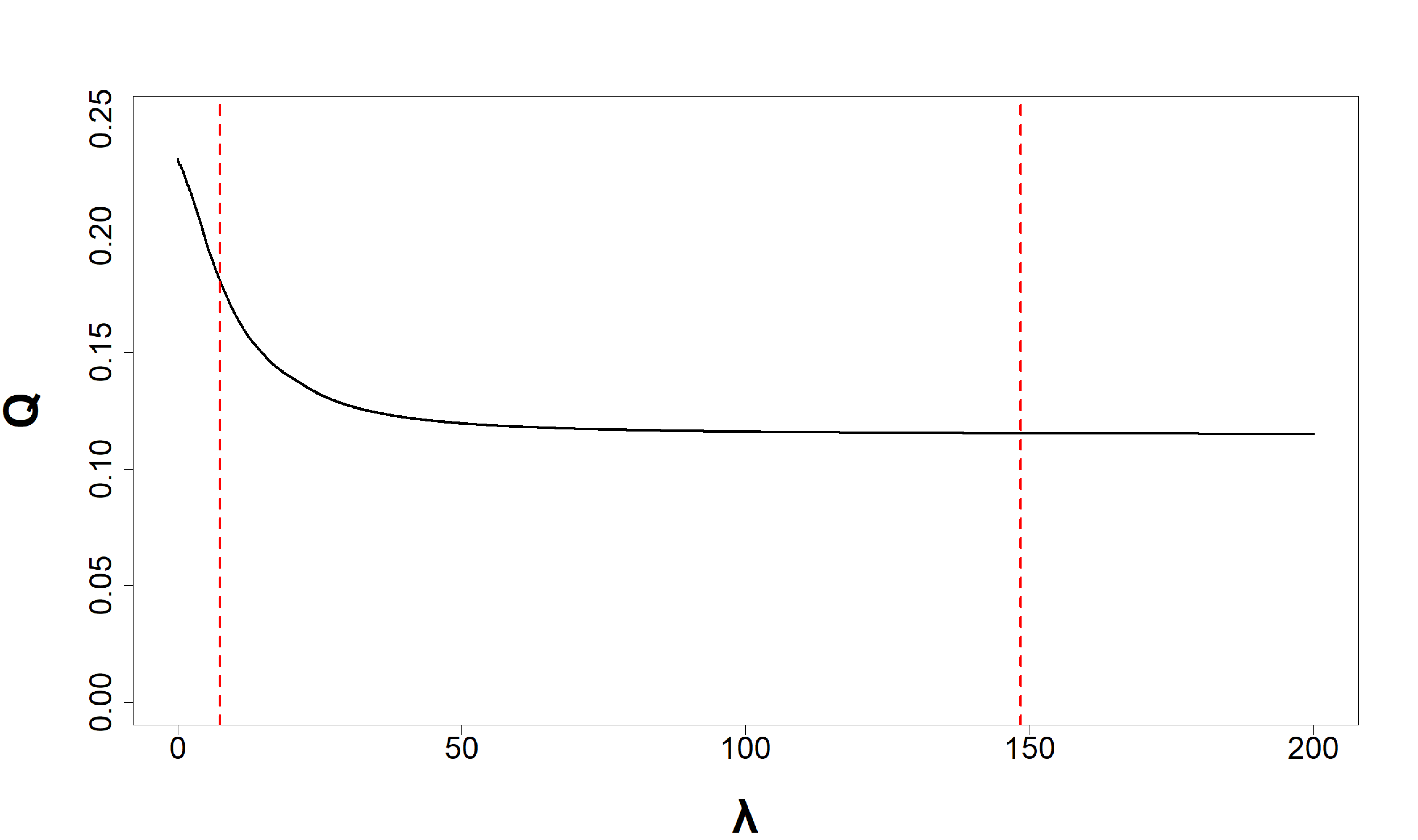}
	\caption{The curve (black) of $\mathcal{Q}$ with respect to $\lambda$. The coordinates of the vertical dashed lines (red) are $\exp(2)$ and $\exp(5)$, respectively.  }
	\label{fig:q}
\end{figure}

\begin{figure}[http]
	\centering
	\includegraphics[width=0.785\textwidth]{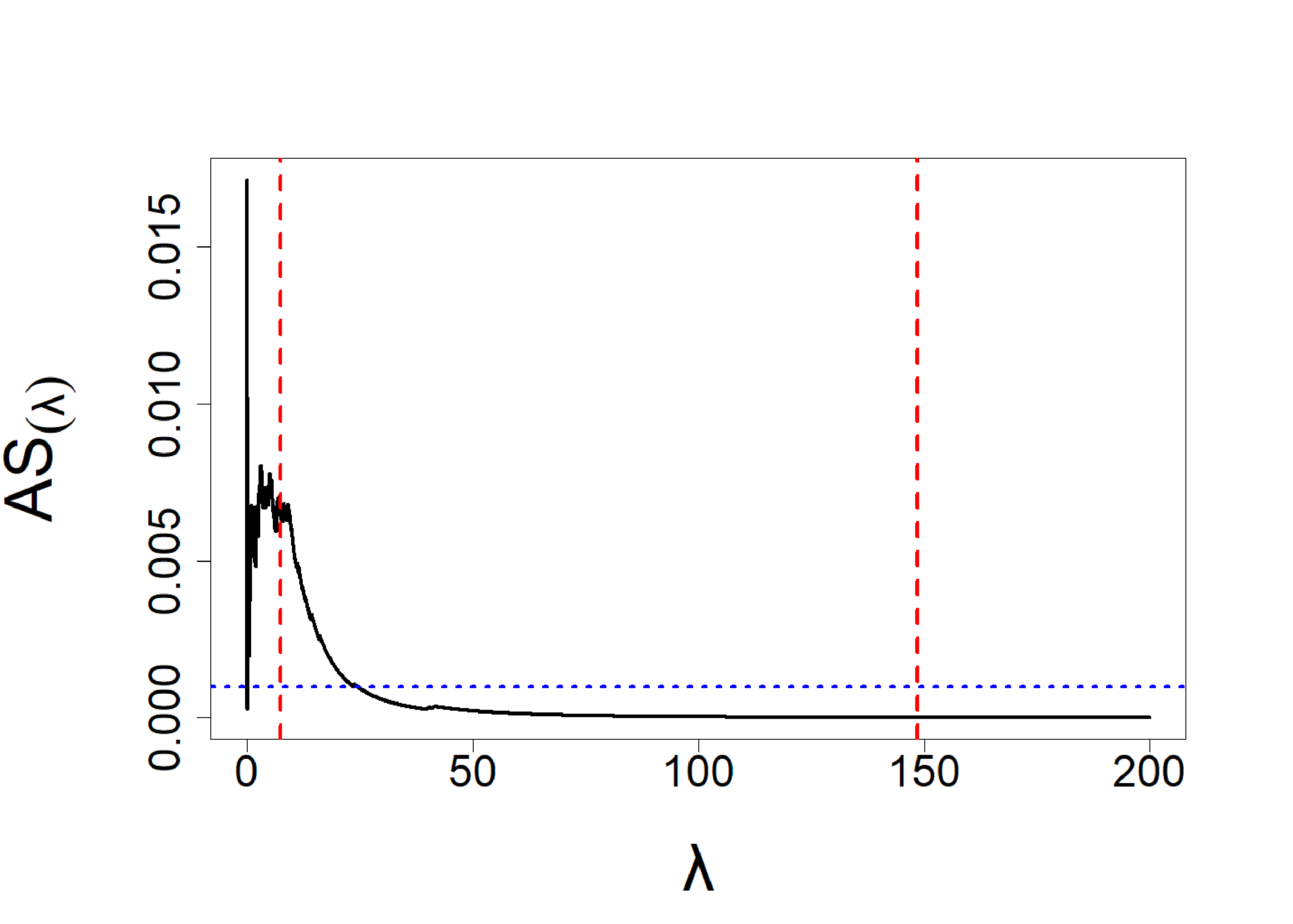}
	\caption{Plot of the $\mathrm{AS}$ curve (black) as a function of $\lambda$.
		The coordinates of the vertical dashed lines (red) are $\exp(2)$ and $\exp(5)$, respectively, and the  coordinate of horizontal dashed line (blue) is $0.001$. }
	\label{fig:as}
\end{figure}

\textbf{Remark.} We mention three interesting methodological aspects related to this selection procedure. First, the proposed criterion is based on the computation of $\mathrm{AS}$ for different $\lambda_k$  and it does not entail the need for computing the corresponding MDE $\hat\theta^\lambda_n$. This aspect makes our criterion easy-to-implement and fast. Second, one may wonder what is the distribution of $\lambda^\ast$ when many samples from the same $O\cup I$ setting are available. Put it in another way, given many samples and selecting in each sample $\lambda$ by the aforementioned procedure how does $\lambda^\ast$ change? Third, an interesting question related to the proposed criterion is: what happens to $\lambda^\ast$ if the statistician specifies a contamination level $\tau$ that is different from the true contamination level $\varepsilon$?
As far as the last two questions are concerned, we remark that Figure \ref{fig:as} is based on one sample and it cannot answer these questions. Thus, to gain deeper understanding, we keep the same setting as in the Monte Carlo experiment (which  yielded  Figure \ref{fig:as}) and we simulate 1000 samples, specifying $\tau =0.05,0.1,0.2$ and selecting $\lambda^{\star} $ by the aforementioned procedure. In Figure~\ref{fig:density} we display the kernel density estimates of the selected values. 

The plots illustrate that, for each $\tau$ value, the kernel density estimate  is rather concentrated: for instance, if $\tau=0.1$ the support of the estimated density goes from about 3 to about 3.4. Moreover, looking at the estimated densities when $\tau=0.005$ (underestimation of the number of outliers) and $\tau=0.2$ (overestimation of the number of outliers), we see that, even if $\tau \neq \varepsilon $, the selection criterion still yields values of $\lambda$ which are within the  benchmark range $[\exp(2), \exp(5)]$. 
{Similar considerations hold for other user-specified values of $t$ in the thresholds  (\ref{Eq.Ozero}) and (\ref{Eq.Opos}).}

\begin{figure}[http]
	\centering
	\includegraphics[width=0.89\textwidth]{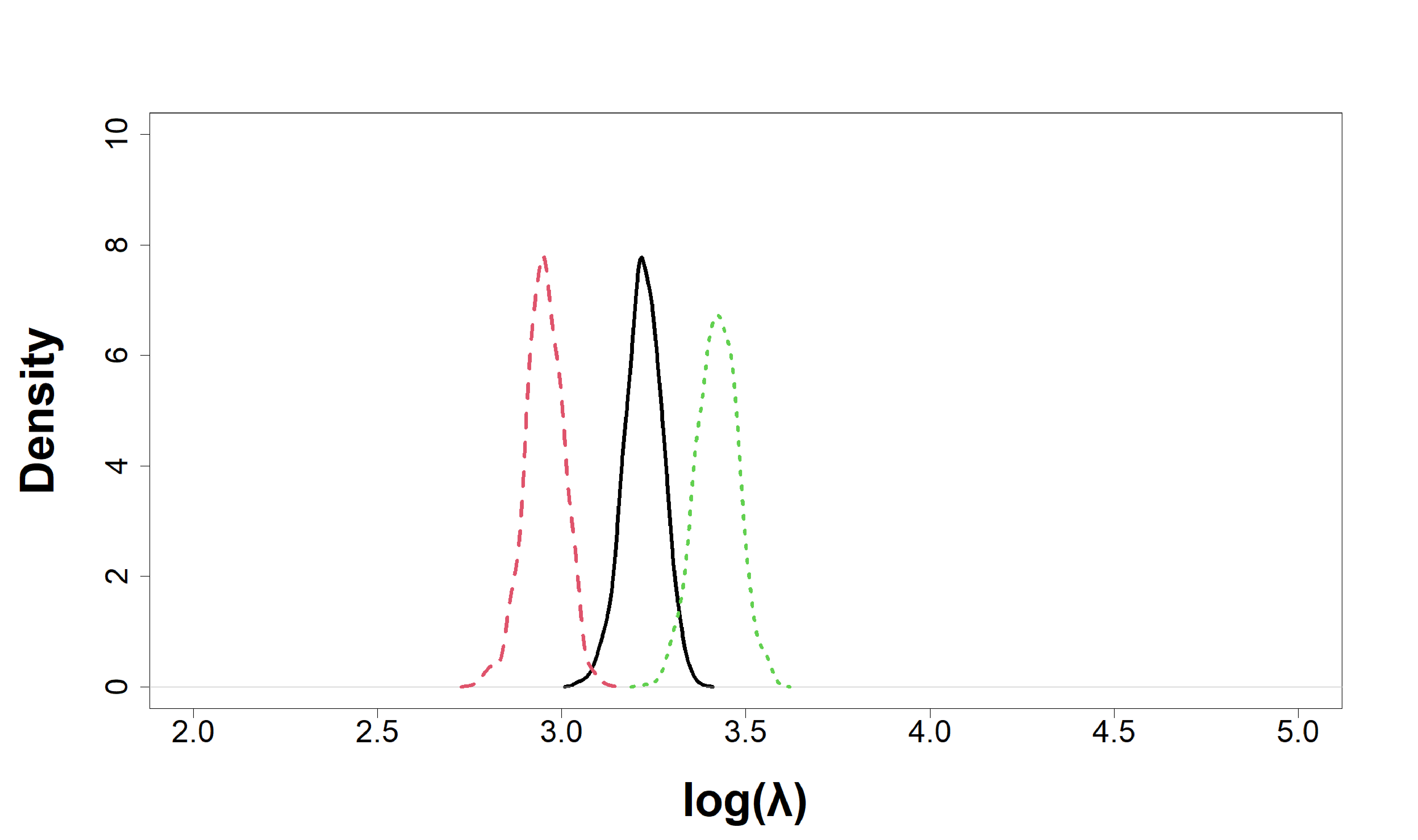}
	\caption{ Dashed line (red), solid line (black), dotted line (green) represent the Gaussian kernel density functions (bandwidth = $0.0167$) of $\log(\lambda^{\star})$ for $\tau = 0.05, 0.1, 0.2$, respectively. }
	\label{fig:density}
\end{figure}
\color{black}

\subsection{Additional numerical exercises}\label{App.MCresults}

This section, as a compliment to Section~\ref{Sec:Applications}, provides more details and additional  numerical results of a few applications of ROBOT in statistical estimation and machine learning tasks.

\subsubsection{ROBOT-based estimation via outlier detection (pre-processing)}\label{App.ROBOT}

\textbf{Methodology.} Many routinely-applied methods of model estimation can be significantly affected by outliers.  Here we show how  ROBOT can be applied for robust model estimation. 

Recall that formulation~\eqref{equ7} is devised in order to  detect and remove outliers. Then, one may exploit this feature and estimate in a robust way the model parameter.   Hereunder, we explain in detail the outlier detection procedure.

Before delving into the details of the procedure, we give a cautionary remark. Although theoretically possible and numerically effective, the use of ROBOT for pre-processing does not give the same statistical guarantees for the parameter estimations as the ones yielded by  MRWE and MERWE (see \S \ref{Robust Wasserstein estimator}).

 With this regard we mention two key inference issues. (i) There is the risk of having a masking effect: one outlier may hide another outlier, thus  if one is
removed another one may appear. As a result, it becomes unclear and really subjective when to stop the pre-processing procedure. (ii) Inference obtained via M-estimation on the cleaned sample is conditional to the outlier detection: the distribution of the resulting estimates is unknown. For further discussion see  \cite[Section 4.3]{maronna2006robust}. 

Because of these aspects, we prefer the use of MERWE, which does not need any (subjective) pre-processing and it is an estimator defined by an automatic procedure, which is, by design, resistant to outliers and whose statistical properties can be studied.

Assume we want to estimate data distribution through some parametric model $g_{\theta}$ indexed by a parameter $\theta$. For the sake of exposition, let us think of a regression model, where $\theta$ characterizes the conditional mean to which we add an innovation term. 

Now, suppose the innovations are i.i.d. copies of a random variable having normal distribution. The following ROBOT-based procedure can be applied  to eliminate the effect of outliers: (i) estimate $\theta$ based on the observations and compute the variance $\tilde{\sigma}^2$ of the residual term; (ii) solve the ROBOT problem between the residual term and a normal distribution whose variance is $\tilde{\sigma}^2 $  to detect the outliers; (iii) remove the detected outliers and update the estimate of $\theta$; (iv) calculate the variance $\tilde{\sigma}^2$ of the updated residuals; (v) repeat (ii) and (iii) until $\theta$ converges or the number of iterations is reached. More details are given in Algorithm \ref{alg:robust estimation}. For convenience, we still denote the output of Algorithm \ref{alg:robust estimation}  as $g_{\theta}$.


\begin{algorithm}[H]
	\SetAlgoLined
	\KwData{ observations $\{(X_i,Y_i)\}_{i=1}^{n} $, robust regularization parameter $\lambda$, number of iterations $M$, number of generated sample $n$, parametric model $ g_{\theta} $ (indexed by a parameter $\theta$)}
	\KwResult{Model $g_{\theta}$ }
	estimate $\theta$ based on the observations and compute the residuals and their sample standard deviation $\tilde{\sigma}$\;
	\While{$\ell \leq M $}{
		generate samples $ \tilde{\mathbf{Z}}^{(n)} =\{\tilde{Z}_j\}_{j=1}^{n}$  from $\mathcal{N}(0,\tilde{\sigma}^2)$, and let  $\hat{\mathbf{Z}}^{(n)} = \left\lbrace \hat{Z_i}; 1\leq i \leq n \right\rbrace$, where $\hat{Z_i}=Y_i-g_{\theta}(X_i)$\;
		calculate the cost matrix $\mathbf{C} := (C_{ij})_{1\leq i,j\leq n}$ between $\hat{\mathbf{Z}}^{(n)} $ and $ \tilde{\mathbf{Z}}^{(n)}$\;
		collect all the indices $\mathcal{I}=\{(i,j): C_{ij} \geq 2 \lambda\}$, and let $ C^{(\lambda)}_{ij}=2\lambda $ for $(i,j) \in \mathcal{I}$ and $ C^{(\lambda)}_{ij} =C_{ij}$ otherwise\;
		calculate the transport matrix $ \mathbf{\Pi}^{(n,n)} $ between $ \hat{\mathbf{Z}}^{(n)}$ and $\tilde{\mathbf{Z}}^{(n)}$ based on the  modified cost matrix $\mathbf{C}^{(\lambda)} := (C^{(\lambda)}_{ij})_{1\leq i,j\leq n}$\;
		set $s(i)=-\sum_{j=1}^{n} \mathbf{\Pi}^{(n,n)}(i, j) \mathbbm{1}_{(i, j) \in \mathcal{I}}$\;
		find $\mathcal{H}$, the set of all the indices where $s(i)+1/n = 0$\;
		use $\{(X_i,Y_i)\}_{i \notin \mathcal{H}}$ to update  $ \theta $ and the residuals\;
		update the sample standard deviation $\tilde{\sigma}$ of the residuals\;
		$\ell \leftarrow \ell+1$\;
	}
	\caption{ROBOT ESTIMATION VIA PRE-PROCESSING}\label{alg:robust estimation}
\end{algorithm}

To illustrate the ROBOT-based estimation procedure, consider the following linear regression model
\begin{equation}\label{eq.LinearReg}
	\begin{array}{cc}
		& X \sim \mathrm{U}(0,10), \\
		& Y =\alpha X + \beta + Z, \  Z \sim  \mathcal{N}(0,\sigma^2).
	\end{array}
\end{equation}
The observations  are contaminated by additive outliers and are generated from the model 
$$\begin{array}{cc}
	& X_i\n \sim \mathrm{U}(0,10), \\
	& Y_i\n =\alpha X_i\n + \beta + Z_i\n + \mathbbm{1}_{(i=h)}  \epsilon_i\n,\\
	& Z_i\n \sim \mathcal{N}(0,\sigma^2) \text{\: and \:}  \epsilon_i\n \sim\mathcal{N}(\eta+ X_i\n, 1),
\end{array}$$
with $i = 1, 2, \ldots, n$, where $h$ is the location of the additive outliers, $\eta$ is a  parameter to control the size of contamination, and $\mathbbm{1}_{(.)}$ denotes the indicator function. We consider the case that outliers are added at probability $ \varepsilon $.

In this toy experiment, we set  $ n=1000 $, $\sigma = 1 $, $\alpha =1$ and $\beta =1$, and we change the value of $\eta$ and the proportion $\varepsilon$ of abnormal points to all points  to investigate the impact of the size and proportion of the outliers on different methodologies. Each experiment is repeated 100 times. 

We combine the ordinary least square  (OLS) with ROBOT (we set $ \lambda=1$) through Algorithm \ref{alg:robust estimation}  to get a robust estimation of $\alpha$ and $\beta$, and we will compare this robust approach with the OLS.
Boxplots of slope estimates and intercept estimates based on the OLS and ROBOT, for different values $\eta$ and $\varepsilon$, are demonstrated in Figures \ref{fig:a} and \ref{fig:b}, respectively.


\begin{figure}[t!]
	\centering
	\begin{subfigure}[(a)]{0.45\textwidth}
		\centering
		\includegraphics[width=\textwidth]{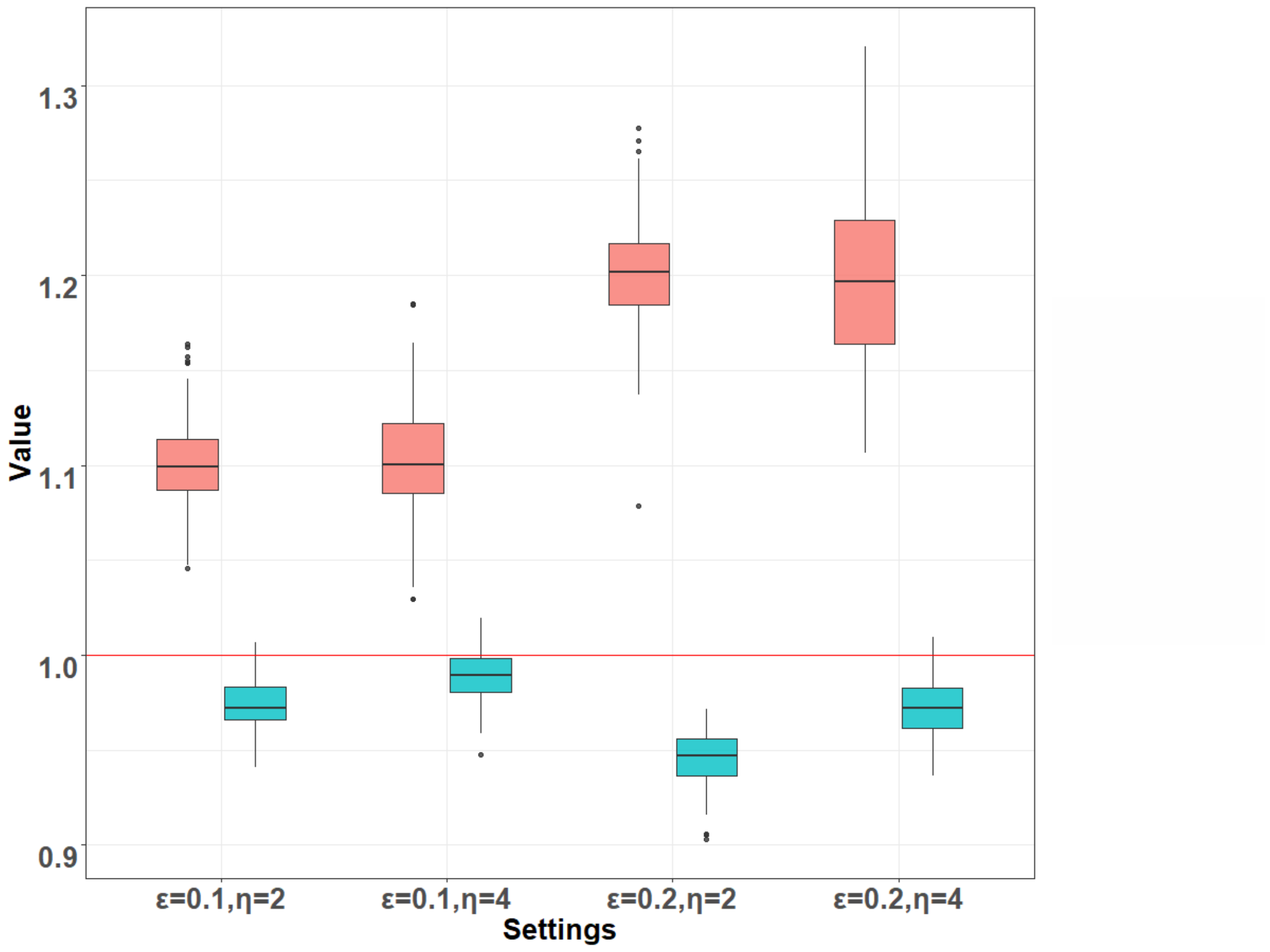}
		\caption{boxplots of slope ($\alpha$) estimates in different situation }
		\label{fig:a}
	\end{subfigure}
	\begin{subfigure}[(b)]{0.45\textwidth}
		\centering
		\includegraphics[width=\textwidth]{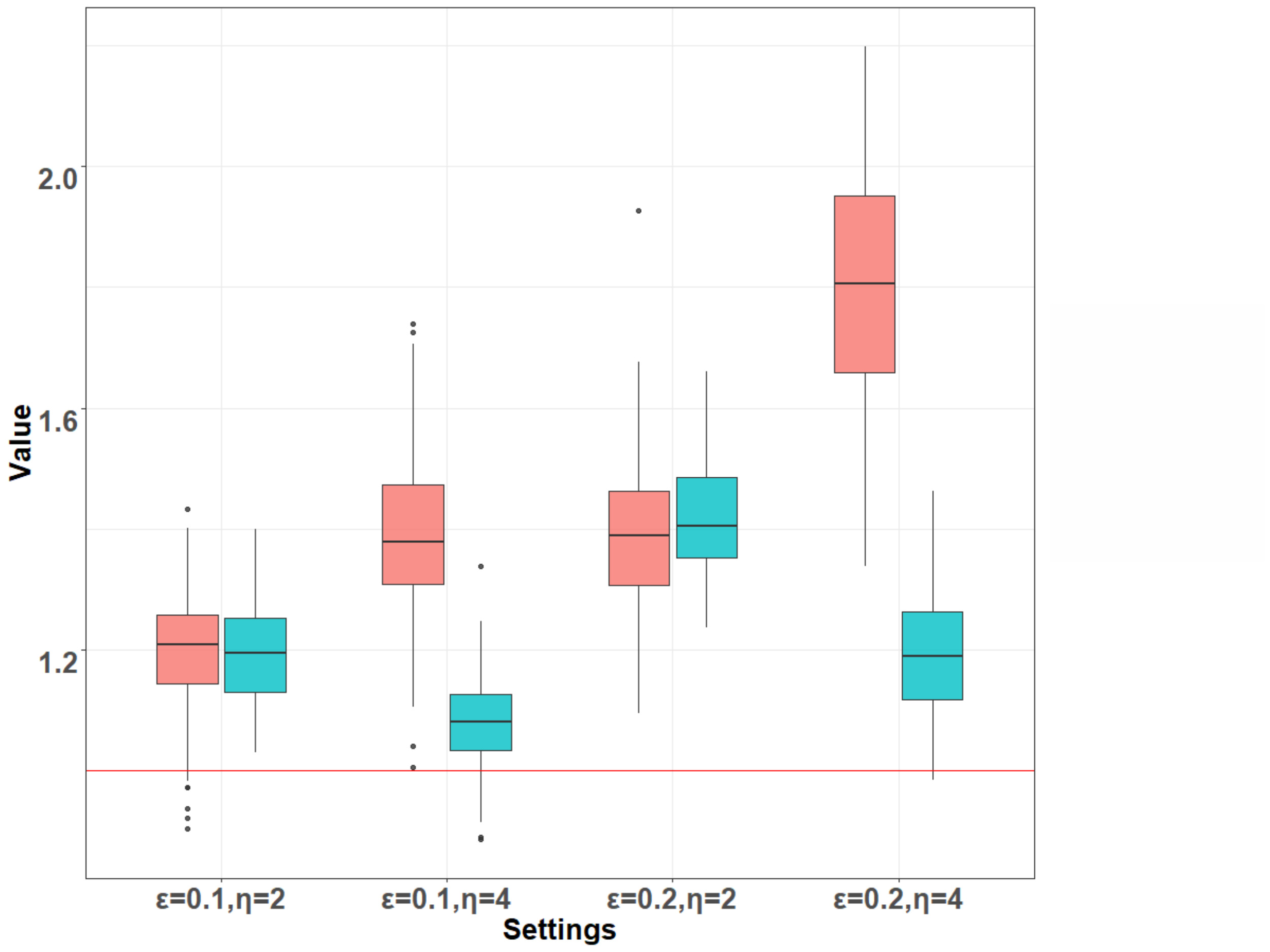}
		\caption{ boxplots of intercept ($\beta$) estimates in different situation}
		\label{fig:b}
	\end{subfigure}
	\caption{We show the estimated effect of parameters through boxplots. The red lines represent  the true values of the model parameters.}
	\label{boxplot}
\end{figure}


For the estimation of $\alpha$, Figure \ref{fig:a} shows that the OLS is greatly affected by the outliers: both the bias and variance increase rapidly with the size and proportion of the outliers. The ROBOT, on the contrary, shows great robustness against the outliers, with much smaller bias and variance. For the estimation of  $\beta$ (Figure \ref{fig:b}),
ROBOT and OLS have similar performance for $ \eta=2 $, but ROBOT outperforms OLS when the  contamination size  is large. 

%

%
%
	%


The purpose of our construction of outlier robust estimation is to let the trained model close to the uncontaminated model~\eqref{eq.LinearReg}, hence it can fit better the data generated from~\eqref{eq.LinearReg}. We, therefore, consider calculating mean square error (MSE)  with respect to the data generated from~\eqref{eq.LinearReg}, in order to compare the performance of the OLS and ROBOT. 

In simulation, we set  $ n=1000 $, $\sigma = 1 $, $\alpha =1$, $\beta =1$ and change the contamination size $\eta$. Each experiment is repeated $100$ times. 
In Figures \ref{fig:mse1} and \ref{fig:mse2}, we plot the averaged MSE of the OLS and ROBOT against $\eta$ for $\varepsilon=0.1$ and $\varepsilon=0.2$, respectively.
Even a rapid inspection of Figures \ref{fig:mse1} and \ref{fig:mse2} demonstrates the better performance of ROBOT than the OLS. Specifically, in both figures, OLS is always above ROBOT, and the latter yields much smaller MSE than the former for a large $\eta$. After careful observation, it is found that when the proportion of outliers is small, the MSE curve of ROBOT is close to 1, which is the variance of the data itself. The results show that ROBOT has good robustness against outliers of all sizes.

ROBOT, with the ability to detect outliers, hence fits the uncontaminated distribution better than OLS. Next, we use Figure~\ref{fig:iter1} and Figure~\ref{fig:iter10} to show Algorithm~\ref{alg:robust estimation}  enables us to detect outliers accurately and hence makes the estimation procedure more robust. More specifically, we can see there are many points in Figure~\ref{fig:iter1} (after 1 iteration) classified incorrectly; after 10 iterations (Figure~\ref{fig:iter10}), only a few outliers close to the uncontaminated distribution are not detected.


\begin{figure}[t!]
	\centering
	\begin{subfigure}[b]{0.45\textwidth}
		\centering
		\includegraphics[width=\textwidth]{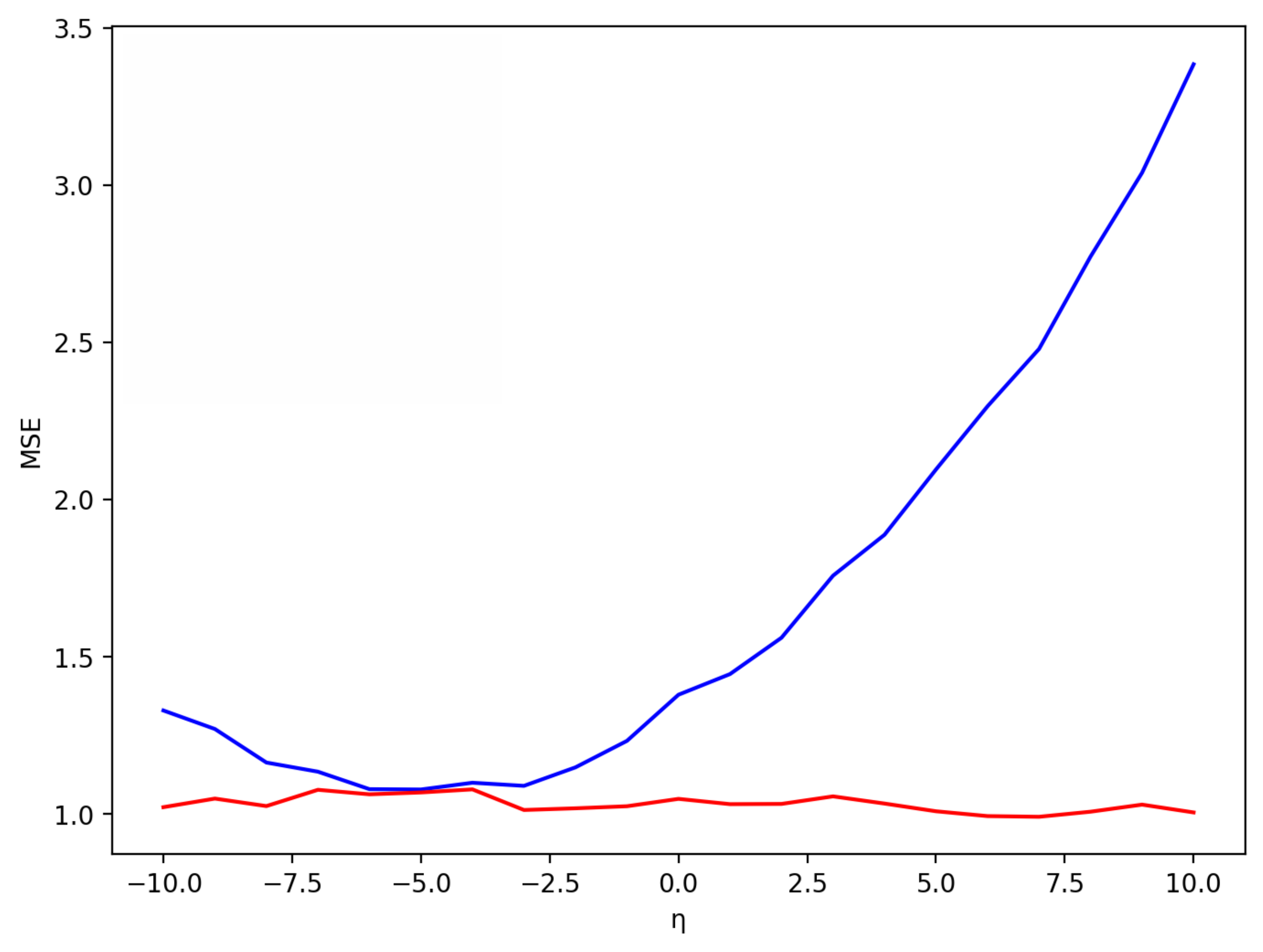}
		\caption{ MSE ($p=0.1$)}
		\label{fig:mse1}
	\end{subfigure}
	\begin{subfigure}[b]{0.45\textwidth}
		\centering
		\includegraphics[width=\textwidth]{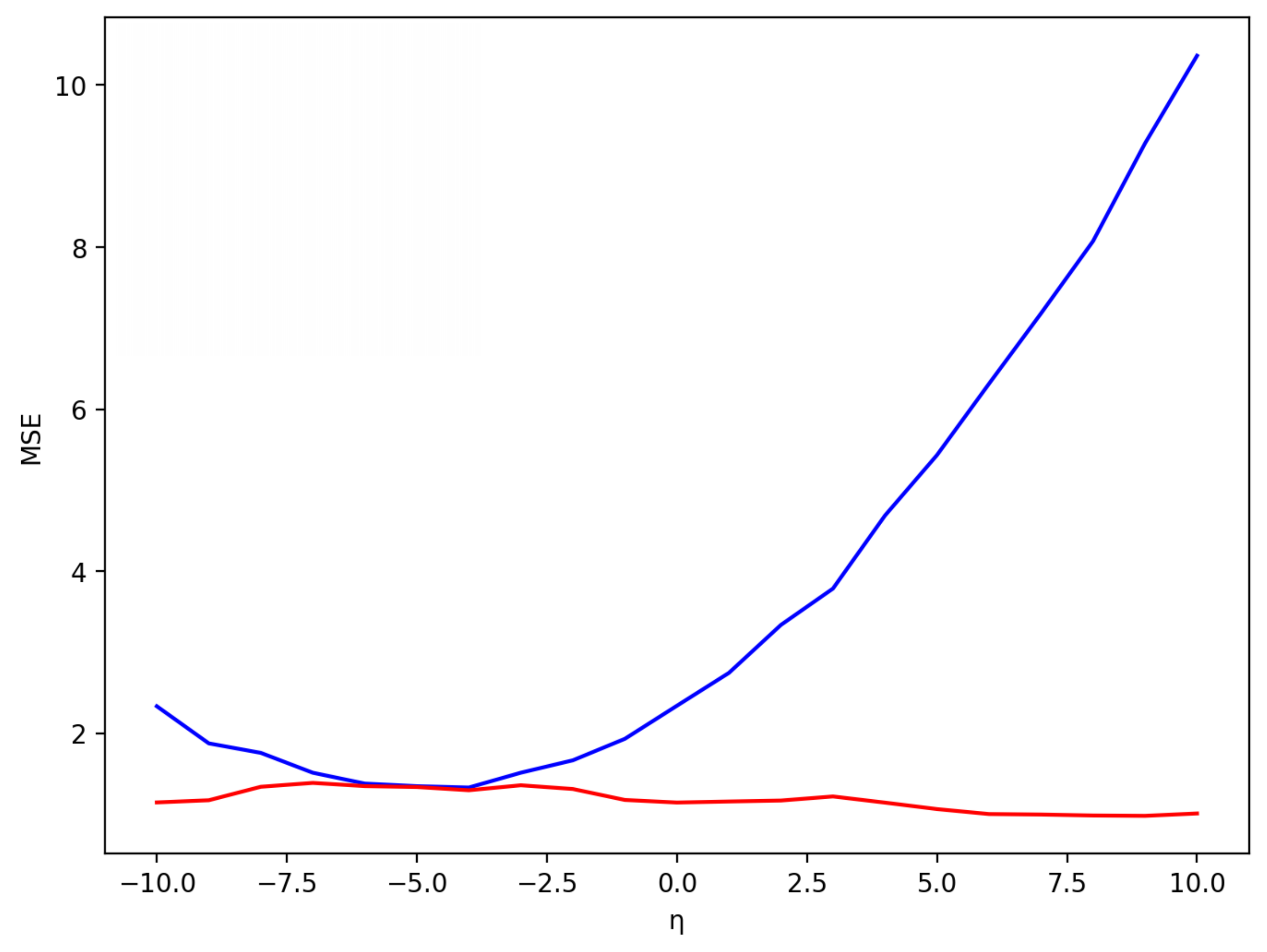}
		\caption{ MSE ($p=0.2$)}
		\label{fig:mse2}
	\end{subfigure}
	\caption{We consider using MSE based on the uncontaminated data to show how well the estimate fits the real distribution.  The left picture is the MSE corresponding to different scale parameter $ \eta $  when the proportion of outliers is 0.1. The red line represents  ROBOT-based estimation while the blue line represents OLS. On the right is the result when $ \varepsilon =0.2 $. }
\end{figure}

\begin{figure}[t!]
	\centering
	\begin{subfigure}[b]{0.48\textwidth}
		\centering
		\includegraphics[width=\textwidth]{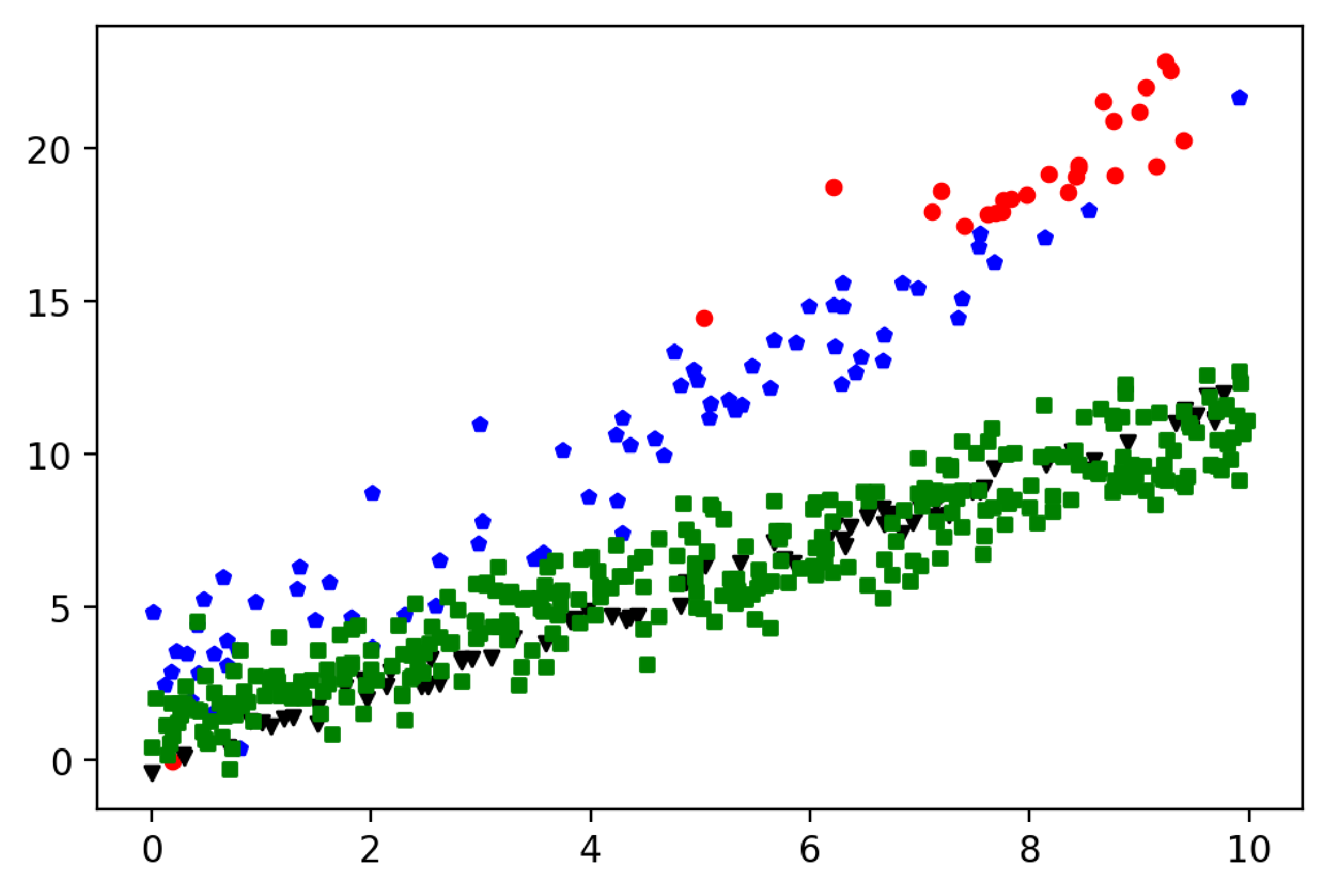}
		\caption{iteration=1}
		\label{fig:iter1}
	\end{subfigure}
	\begin{subfigure}[b]{0.48\textwidth}
		\centering
		\includegraphics[width=\textwidth]{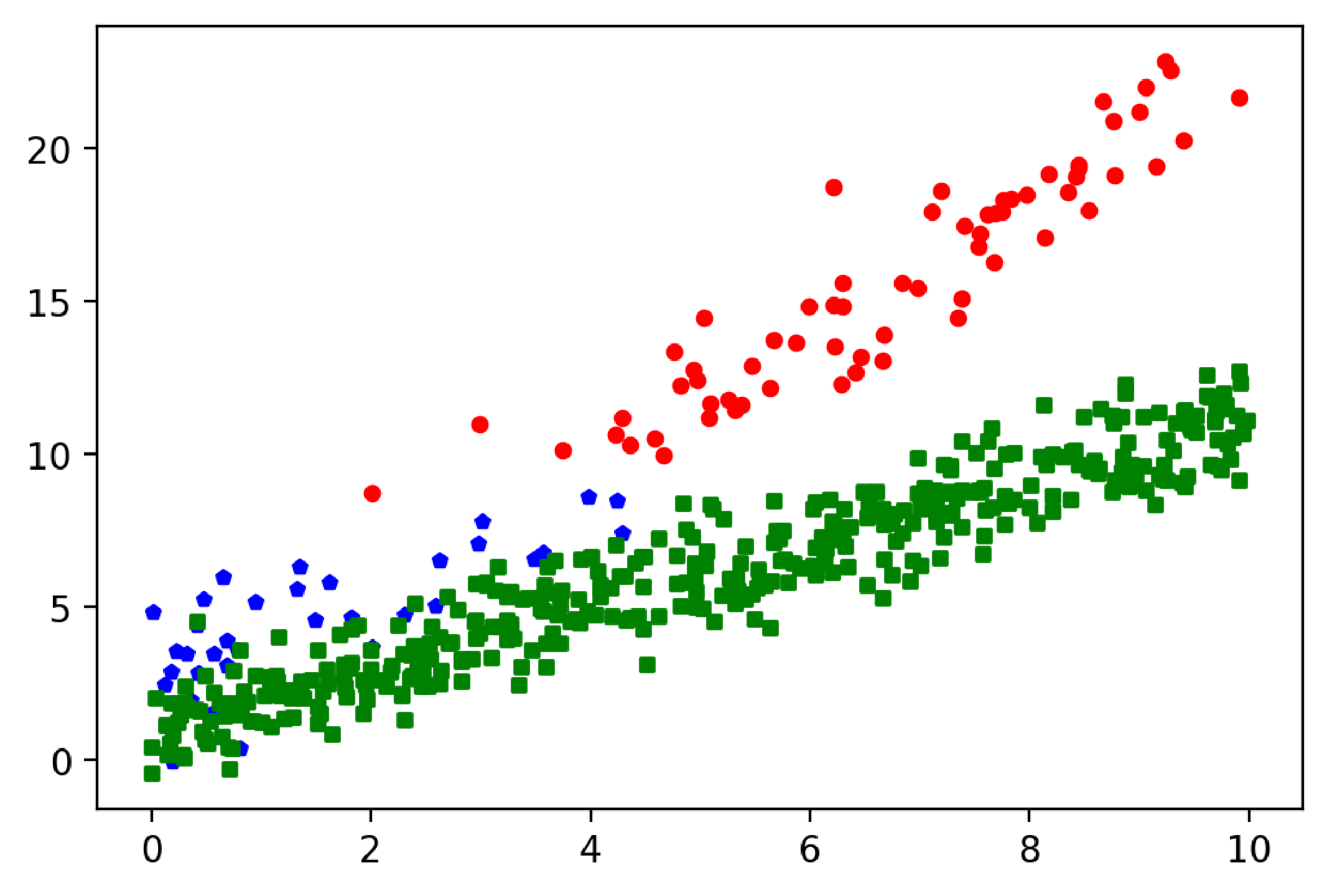}
		\caption{ iteration=10} 
		\label{fig:iter10}
	\end{subfigure}
	\label{fig:detection}
	\caption{Outlier detection for ROBOT-based estimation: the left panel is the result after 1 iteration in Algorithm~\ref{alg:robust estimation},  and the right panel is the result after 10 iterations. Circle points (red) and square points (green) represent the outliers and uncontaminated points correctly detected respectively; pentagon points (blue)  and inverted triangle (black) represent the outliers and uncontaminated points incorrectly detected. (Simulation settings: $n=500$, $ \varepsilon =0.2 $ and $ \eta =1 $).}
\end{figure}

\subsubsection{RWGAN}\label{app.rwgan}

\noindent \textbf{RWGAN-2} Here we provide the key steps needed to implement the RWGAN-2, which is derived directly from \eqref{equ7}. Noticing that \eqref{equ7} allows the  distribution of reference samples to be modified,  and there is a penalty term to limit the degree of the modification, the RWGAN-2 can thus be implemented as follows.

Letting ${X}^{\prime} \sim \mathrm{P}_{{\rm r}^{\prime}} $, where $ \mathrm{P}_{{\rm r}^{\prime}} $ is  the modified reference distribution, and  other notations be the same as in \eqref{equ11}, the objective function of the RWGAN-2 is defined as 
\begin{equation}\label{equ.rwgan2}
	\min_{\theta} \min_{\mathrm{P}_{{\rm r}^{\prime}} } \sup _{\|f_{\xi}\|_L \leq 1,{\rm range}(f_{\xi})\leq 2\lambda } \left\lbrace  {\rm E}[f_{\xi}({X}^{\prime} )]-{\rm E}[f_{\xi}(G_{\theta}({Z}))] + \lambda_{\rm m}  \Vert \mathrm{P}_{{\rm r}}-\mathrm{P}_{{\rm r}^{\prime}}  \Vert_{\rm TV} \right\rbrace,
\end{equation}
where $ \lambda_{\rm m} $ is the penalty parameter to control the modification. The algorithm for implementing the RWGAN-2 is summarized in Algorithm~\ref{alg:robust wgan}. Note that in Algorithm~\ref{alg:robust wgan}, we consider introducing a new neural network $ U_{\omega} $ to represent the modified reference distribution, which  corresponds to $ \mathrm{P_{r^{\prime}}} $ in the objective function \eqref{equ.rwgan2}. Then we can follow \eqref{equ.rwgan2} and use standard steps to update  generator, discriminator and modified distribution in turn. Here are a few things to note: (i) from the objective function, we know that the loss function of discriminator is $ {\rm E}[f_{\xi}(G_{\theta}({Z}))] -   {\rm E}[f_{\xi}({X}^{\prime} )]  $,  the loss function of modified distribution is $ {\rm E}[f_{\xi}({X}^{\prime} )] + \lambda_{\rm m}  \Vert \mathrm{P}_{{\rm r}}-\mathrm{P}_{{\rm r}^{\prime}}  \Vert_{\rm TV} $ and the loss function of generator is $  -{\rm E}[f_{\xi}(G_{\theta}({Z}))] $; (ii) we truncate the absolute values of the discriminator parameter $\xi$   to no more than a fixed constant $ c $ on each update: this  is a simple and efficient way to satisfy the Lipschitz condition in WGAN; (iii) we adopt  Root Mean Squared Propagation (RMSProp) optimization method which is recommended by \cite{arjovsky2017wasserstein} for WGAN.

\begin{algorithm}[H]
	\SetAlgoLined
	\KwData{observations $\{{X}_1, {X}_2, \ldots, {X}_n\}$, critic $f_{\xi}$  (indexed by a parameter $\xi$), generator $G_{\theta}$ (indexed by a parameter $\theta$), modified distribution $U_{\omega}$  (indexed by a parameter $\omega$), learning rate $ \alpha $, modification penalty parameter $ \lambda_{\rm m} $, batch size $ N_{\rm batch} $, number of iterations $N_{\rm iter}$, noise distribution $ \mathrm{P}_{{z}} $, clipping parameter $ c $}
	\KwResult{generation model $G_{\theta}$  }
	\While{$i \leq N_{\rm iter} $}{
		sample a batch of reference samples $\{{X}_i\}_{i=1}^{N_{\rm batch}}$\;
		sample a batch of i.i.d. noises $\{{Z}_i\}_{i=1}^{N_{\rm batch}} \sim \mathrm{P}_{{z}}$\;
		let ${L}_{\xi} =  \frac{1}{N_{\rm batch}} \sum_{i}  f_{\xi}(G_{\theta}({Z}_i)) -  \sum_{i} U_{\omega}({X}_i) f_{\xi}({X}_i) $\;	
		update  $\xi\xleftarrow{} \xi - \alpha \cdot RMSProp(\xi, \nabla_{\xi} {L}_{\xi}) $\;
		$\xi \xleftarrow{} \mathrm{clip}(\xi, -c,c)$\;
		let ${L}_{\theta} = -\frac{1}{N_{\rm batch}} \sum_{i} f_{\xi}(G_{\theta}({Z}_i)) $\;
		update  $\theta \xleftarrow{} \theta- \alpha \cdot RMSProp(\theta,\nabla_{\theta} {L}_{\theta}) $\;
		let ${L}_{\omega}= \sum_{i} U_{\omega}({X}_i) f_{\xi}({X}_i)  + \lambda_{\rm m} \sum_{i} | U_{\omega}({X}_i) -1/N_{\rm batch}| $\;
		update  $\omega\xleftarrow{} \omega- \alpha \cdot RMSProp(\omega, \nabla_{\omega} {L}_{\omega}) $\;
	}
	\caption{RWGAN-2}\label{alg:robust wgan}
\end{algorithm}

\noindent \textbf{Some details.} If we assume that the reference sample is $ m $-dimensional,  the generator is a neural network with $ m $ input nodes and $ m $ output nodes; the critic is a neural network with $ m $ input nodes and $ 1 $ output node. Also, in RWGAN-2 and RWGAN-B, we need an additional neural network (it has $ m $ input nodes and $ 1 $ output node) to represent the modified distribution.  In both the synthetic data example and Fashion-MNIST example, all generative adversarial  models are implemented based on $\mathsf{Pytorch}$  (an open source Python machine learning library \url{https://pytorch.org/}) to which we refer for additional computational details.


In RWGAN-1,  as in the case of WGAN, we also only need two neural networks representing generator and critic, but we need to make a little modification to the activation function of the output node of critic: (i)  choose a bounded  activation function (ii) add a positional parameter to the activation function.  
This is for $ f_{\xi} $ to satisfy the condition $ \mathrm{range}(f_{\xi})\leq 2\lambda $  in \eqref{equ12}. So RWGAN-1 has a simple structure similar to WGAN, but still shows good robustness to outliers. We can also see this from the objective functions of RWGAN-1 (\eqref{equ12}) and RWGAN-2 (\eqref{equ.rwgan2}): RWGAN-2 needs to update one more parameter $ \omega $ than RWGAN-1. This means more computational complexity.

In addition, we make use of Fashion-MNIST to provide an example of how generator $ G_{\theta} $ works on real-data. We  choose a  standard  multivariate normal random variable  (with the same dimension as the reference sample) to obtain the random input sample. An image in Fashion-MNIST is of $ 28\times 28 $ pixels, and we think of it as a 784-dimensional vector. Hence, we choose a  $ 784 $-dimensional standard multidimensional normal random variable $ Z $ as the random input of the generator $ G_{\theta} $,
from which we output a vector of the same 784 dimensions  (which corresponds to the values of all pixels in a grayscale image) to generate one fake image. To understand it in another way, we can regard each image as a sample point, so all the images in Fashion-MNIST constitute an empirical reference distribution $ \hat{\mathrm{P}}_{\rm r} $, thus our goal is to train $ G_{\theta} $ such that   $\mathrm{P}_{\theta} $ is very close to $ \hat{\mathrm{P}}_{\rm r} $.

\bibliographystyle{imsart-number} 
\bibliography{Bibliography-MM-MC}

%
%
%
%

\end{document}